\documentclass[12pt,reqno]{amsart}
\usepackage{txfonts,fullpage}
\usepackage[dvipdfmx]{graphicx}

\newtheorem{thm}{Theorem}[section]
\newtheorem{prop}[thm]{Proposition}

\newtheorem{cor}[thm]{Corollary}
\newtheorem{fact}[thm]{Fact}

\newtheorem{rem}[thm]{Remark}
\numberwithin{equation}{section}
\allowdisplaybreaks

\begin{document}

\title{Degenerations of the fourth order $q$-Garnier system arising from confluences in quivers}
\date{}
\author{Kazuya Matsugashita}
\address{Graduate School of Science and Engineering, Kindai University, 3-4-1, Kowakae, Higashi-Osaka, Osaka 577-8502, Japan}
\email{2444310104w@kindai.ac.jp}
\author{Takao Suzuki}
\address{Department of Mathematics, Kindai University, 3-4-1, Kowakae, Higashi-Osaka, Osaka 577-8502, Japan}
\email{suzuki@math.kindai.ac.jp}
\author{Satoshi Tsuchimi}
\address{Graduate School of Science and Engineering, Kindai University, 3-4-1, Kowakae, Higashi-Osaka, Osaka 577-8502, Japan}
\email{tsuchimi@math.kindai.ac.jp}

\maketitle

%%%%%%%%%%%%%%%%%%%%%%%%%%%%%%%%%%%%%%%%%%%%%%%%%%%%%%%%%%%%%%%%
\begin{abstract}

The $q$-Garnier system was first proposed by Sakai, and alternative directions for its discrete time evolutions were provided by Nagao and Yamada.
Recently, it was shown that all of those evolution directions are derived from a birational representation of an extended affine Weyl group which arises from the cluster algebraic construction established by Masuda, Okubo and Tsuda.
In this article, we investigate a degeneration structure of the fourth order $q$-Garnier system by using confluences in quivers.

Key Words: Discrete Painlev\'e equation, Cluster algebra, Affine Weyl group, Garnier system, Basic hypergeometric series

2010 Mathematics Subject Classification: 39A13, 13F60, 17B80, 34M56, 33D15.
\end{abstract}

%%%%%%%%%%%%%%%%%%%%%%%%%%%%%%%%%%%%%%%%%%%%%%%%%%%%%%%%%%%%%%%%
% Section 1
%%%%%%%%%%%%%%%%%%%%%%%%%%%%%%%%%%%%%%%%%%%%%%%%%%%%%%%%%%%%%%%%
\section{Introduction}

The $q$-Garnier system was first proposed by Sakai as a deformation problem of a system of linear $q$-difference equations in \cite{Sak2}.
Afterward, Nagao and Yamada provided alternative directions for its discrete time evolutions by using the Pade method in \cite{NY2}.
Recently, Okubo and one of the present authors derived all of those evolution directions from a birational representation of an extended affine Weyl group of type $(A_{2n+1}+A_1+A_1)^{(1)}$ in \cite{OS1,Suz3}.
At that time, the birational representation was formulated based on the cluster algebraic construction of Weyl groups established by Masuda, Okubo and Tsuda in \cite{MOT1}.
The aim of this article is to investigate a degeneration structure of the fourth order $q$-Garnier system by using the MOT construction and confluences in quivers.

The degeneration structure of the $q$-Painlev\'e equations of second order is given as follows (see \cite{Sak1}).
\[
	\begin{array}{l@{\ }l@{\ }l@{\ }l@{\ }l@{\ }l@{\ }l@{\ }l@{\ }l@{\ }l@{\ }l@{\ }l@{\ }l@{\ }l@{\ }l}
		& & & & & & & & & & & & & & (A'_1)^{(1)} \\
		& & & & & & & & & & & & & \nearrow & \\
		E_8^{(1)} & \to & E_7^{(1)} & \to & E_6^{(1)} & \to & D_5^{(1)} & \to & A_4^{(1)} & \to & (A_2+A_1)^{(1)} & \to & (A_1+A'_1)^{(1)} & \to & A_1^{(1)} \\
	\end{array}
\]
The $q$-Garnier system is regarded as a generalization of the $q$-Painlev\'e equation of type $D_5^{(1)}$.
Bershtein and his collaborators derived all of the $q$-Painlev\'e equations by using the cluster mutation in \cite{BGM,BGMS}.
They also mentioned that the degeneration structure arises from confluences in quivers in \S5 of \cite{BGM}.
Afterward, Okubo and one of the present authors gave its explicit calculation in \cite{OS2}.
In this article, we apply those previous works to the fourth order $q$-Garnier system.
According to \cite{OS1}, it is derived from the quiver with $12$ vertices (Figure \ref{Fig:q-Garnier}).
We consider confluences from this quiver to those with $11$ vertices (Figure \ref{Fig:deg_q-Garnier_11}) and $10$ vertices (Figures \ref{Fig:deg_q-Garnier_101}, \ref{Fig:deg_q-Garnier_102}, \ref{Fig:deg_q-Garnier_103}, \ref{Fig:deg_q-Garnier_104} and \ref{Fig:deg_q-Garnier_105}).

Another aim is to investigate particular solutions to the degenerate $q$-Garnier systems.
It was shown in \cite{IS,NY1,Sak3,Suz2} that the $q$-Garnier system admits particular solutions in terms of the basic hypergeometric series ${}_{n+1}\phi_n$ or the $q$-Lauricella series $\phi_D^{(n)}$.
We give particular solutions to some of the obtained degenerate $q$-Garnier systems.
They are expressed as ratios of the basic hypergeometric series ${}_2\phi_2$ and ${}_1\phi_2$.

This article is organized as follows.
In Section \ref{Sec:Cluster}, we recall the definition of the cluster mutation and the confluence in a quiver.
In Section \ref{Sec:Affine_Weyl}, we introduce the MOT construction of a birational representation for the affine Weyl group of type $(A_5+A_1+A_1)^{(1)}$.
In Section \ref{Sec:q-Garnier}, the fourth order $q$-Garnier system is introduced together with its $q$-hypergeometric solution.
The results of this article are given in the following sections.
In Sections \ref{Sec:Degeneration_12_11} and \ref{Sec:Degeneration_11_10}, we derive degenerated $q$-Garnier systems by using the MOT construction and the confluences in quivers.
In Section \ref{Sec:HG_Solution}, we investigate particular solutions to some degenerate $q$-Garnier systems.

\begin{rem}
The birational representation of the affine Weyl group of type $(A_{2n+1}+A_1+A_1)^{(1)}$ is also formulated as a deformation problem of a system of linear $q$-difference equations with $(2n+2)\times(2n+2)$ matrices called a Lax form in \cite{Mas2,Suz3}.
We expect that the degeneration structure of the $q$-Garnier system is lifted to the Lax form, but an explicit calculation has not been given yet.
\end{rem}

\begin{rem}
In this article, we consider one quiver with $11$ vertices and five quivers with $10$ vertices as equivalence classes for the cluster mutations.
Moreover, we narrow down the candidates of the quiver with $9$ vertices to $7$ quivers by using software.
However it has not been clarified yet whether the candidates can be further narrowed down.
\end{rem}

\begin{rem}
Masuda, Okubo and Tsuda gave generalizations of the $q$-Painlev\'e equations of type $(A_1+A'_1)^{(1)}$ and $A_1^{(1)}$ based on their construction in \cite{MOT2}.
Their corresponding quivers have $7$ and $6$ vertices respectively and are obtained through confluence procedures as follows.
\begin{align*}
	&Q_{101}\ \xrightarrow{8\to9}\ \xrightarrow{6\to7}\ \xrightarrow{8\to4}\ (\text{Quiver for type $(A_1+A'_1)^{(1)}$}), \\
	&Q_{101}\ \xrightarrow{8\to9}\ \xrightarrow{6\to7}\ \xrightarrow{4\to6}\ \xrightarrow{1\to4} (\text{Quiver for type $A_1^{(1)}$}).
\end{align*}
Here the symbol $Q_{101}$ is a quiver with $10$ vertices given in Figure \ref{Fig:deg_q-Garnier_101} and the symbol $i\to j$ is a confluence in a quiver given in Section \ref{Sec:Cluster}.
To be precise, we need to take the mutation $\mu_3$ and a certain permutation after confluences in each case.
We have not clarified a relationship between those two quivers yet.
\end{rem}

\begin{rem}
Kawakami proposed a classification of fourth order Painlev\'e type difference equations in \cite{Kaw}.
Besides, Kawakami, Nakamura and Sakai classified the fourth order Painlev\'e type differential equations from a viewpoint of the deformation problems (see \cite{HKNS}).
The relationship between these previous works and the result of this article seems to be important, but this remains a subject for future research.
\end{rem}

\begin{rem}
It remains an open problem whether the $q$-Garnier system is in one-to-one correspondence with an equivalence class of quivers.
Besides, Masuda proposed another generalization of the $q$-Painlev\'e equation of type $D_5^{(1)}$ in \cite{Mas1}, which is known as the $q$-Sasano system and derived from a quiver distinct from $Q_{12}$ (see \cite{MOT1}).
There is a possibility that an evolution direction of the $q$-Garnier system is derived from the $q$-Sasano quiver.
Furthermore, the degeneration of the $q$-Sasano system remains a subject for future research.
\end{rem}

%%%%%%%%%%%%%%%%%%%%%%%%%%%%%%%%%%%%%%%%%%%%%%%%%%%%%%%%%%%%%%%%
% Section 2
%%%%%%%%%%%%%%%%%%%%%%%%%%%%%%%%%%%%%%%%%%%%%%%%%%%%%%%%%%%%%%%%
\section{Cluster mutation and confluence in quiver}\label{Sec:Cluster}

Let $Q$ be a quiver without $1$-loop or $2$-cycle and $I$ be a set of vertices of $Q$.
Then the quiver $Q$ is in one-to-one correspondence with the skew-symmetric matrix $\Lambda=\left(\lambda_{i,j}\right)_{i,j\in I}$ as follows.
\begin{itemize}
\item
If there are $N$ arrows from $i\in I$ to $j\in I$, then we set $\lambda_{i,j}=N,\ \lambda_{j,i}=-N$.
\item
If there are no arrows between $i\in I$ and $j\in I$, then we set $\lambda_{i,j}=\lambda_{j,i}=0$.
\end{itemize}
Also let $y=\left(y_i\right)_{i\in I}$ be a set of variables called the coefficients.

We define a mutation $\mu_i$ for each $i\in I$ as follows.
\begin{enumerate}
\item
If there exist $M$ arrows from $k\in I$ to $i\in I$ and $N$ arrows from $i$ to $l\in I$, then we add $MN$ arrows from $k$ to $l$.
\item
If $2$-cycles appear, then we remove all of them.
\item
We reverse directions of all arrows touching $i$.
\end{enumerate}
Then each mutation $\mu_i$ is interpreted as a transformation $(\Lambda,y)\mapsto(\tilde{\Lambda},\tilde{y})$ given by
\begin{align*}
	&\tilde{\lambda}_{k,l} = \left\{\begin{array}{ll}-\lambda_{k,l}&(\text{$k=i$ or $l=i$})\\\lambda_{k,l}+\frac{\mathrm{sgn}(\lambda_{k,i})+\mathrm{sgn}(\lambda_{i,l})}{2}\lambda_{k,i}\lambda_{i,l}&(\text{otherwise})\end{array}\right., \\
	&\tilde{y}_k = \left\{\begin{array}{ll}y_i^{-1}&(k=i)\\y_k\left(1+y_i^{\mathrm{sgn}(\lambda_{k,i})}\right)^{\lambda_{k,i}}&(k\neq i)\end{array}\right..
\end{align*}
We also introduce a permutation of two vertices $i,j\in I$ and denote it by $(i,j)$.
It is interpreted as a transformation $(\Lambda,y)\mapsto(\tilde{\Lambda},\tilde{y})$ given by
\[
	\tilde{\lambda}_{k,l} = \lambda_{(i,j)(k),(i,j)(l)},\quad
	\tilde{y}_k = y_{(i,j)(k)}.
\]
Moreover, we introduce a reversal of all vertices and denote it by $\iota$.
It is interpreted as a transformation $(\Lambda,y)\mapsto(\tilde{\Lambda},\tilde{y})$ given by
\[
	\tilde{\lambda}_{k,l} = -\lambda_{k,l},\quad
	\tilde{y}_k = y_k^{-1}.
\] 

\begin{figure}[h]
	\begin{center}
	%\fbox{
	\begin{picture}(50,55)
	\put(0,43){$4$}\put(38,46){\vector(-1,0){25}}\put(45,43){$1$}
	\put(3,38){\vector(0,-1){25}}\put(13,13){\vector(1,1){25}}\put(38,13){\vector(-1,1){25}}\put(48,13){\vector(0,1){25}}
	\put(0,0){$3$}\put(13,3){\vector(1,0){25}}\put(45,0){$2$}
	\end{picture}
	%}
	\qquad\raisebox{25pt}{$\xrightarrow{4\to1}$}\qquad
	%\fbox{
	\begin{picture}(50,55)
	\put(0,43){$1$}
	\put(39,14){\vector(-1,1){25}}\put(37,12){\vector(-1,1){25}}
	\put(0,0){$3$}\put(13,3){\vector(1,0){25}}\put(45,0){$2$}
	\end{picture}
	%}
	\caption{Confluence in quiver}\label{Fig:Confluence}
	\end{center}
\end{figure}
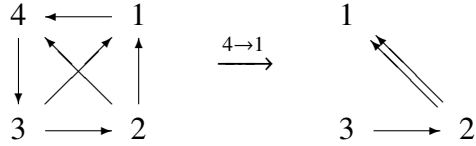

Following the previous works \cite{BGM,OS2}, we consider a confluence of two vertices $i,j\in I$.
It is denoted by $i\to j$ and defined as follows.
\begin{enumerate}
\item
Remove all arrows between $i$ and $j$.
\item
Glue $i$ and $j$ together with arrows touching these vertices (naming the new vertex $j$).
\item
Remove all of the 2-cycles that appear.
\end{enumerate}
The confluence $i\to j$ is interpreted as the following operation on the skew-symmetric matrix $\Lambda$.
\begin{enumerate}
\item
Add the $i$-th row to the $j$-th row and the $i$-th column to the $j$-th column.
\item
Decrease the matrix size by $1$ by deleting the $i$-th row and the $i$-th column.
\end{enumerate}
It is also defined as the following operation on the coefficients.
\begin{enumerate}
\item
Replace $y_i\to\varepsilon^{-1}y_j$ and $y_j\to\varepsilon$.
\item
Take a limit $\varepsilon\to0$.
\end{enumerate}
We give an example of the confluence in Figure \ref{Fig:Confluence}.
Then the corresponding skew-symmetric matrix is transformed as
\[
	\begin{pmatrix}0&-1&-1&1\\1&0&-1&1\\1&1&0&-1\\-1&-1&1&0\end{pmatrix}\quad
	\xrightarrow{4\to1}\quad
	\begin{pmatrix}0&-2&0\\2&0&-1\\0&1&0\end{pmatrix}
\]
and the coefficients are transformed as
\[
	(y_1y_4,y_2,y_3)\quad \xrightarrow{4\to1}\quad (y_1,y_2,y_3).
\]

%%%%%%%%%%%%%%%%%%%%%%%%%%%%%%%%%%%%%%%%%%%%%%%%%%%%%%%%%%%%%%%%
% Section 3
%%%%%%%%%%%%%%%%%%%%%%%%%%%%%%%%%%%%%%%%%%%%%%%%%%%%%%%%%%%%%%%%
\section{Birational representation of affine Weyl group}\label{Sec:Affine_Weyl}

Let $A=\left(a_{i,j}\right)_{i,j=0}^{N}$ be the generalized Cartan matrix defined by
\[
	a_{0,0} = a_{1,1} = 2,\quad
	a_{0,1} = a_{1,0} = -2
\]
for $N=1$ and
\[
	a_{i,j} = \left\{\begin{array}{ll}2&(i=j)\\-1&(j=i\pm1,i\neq0,N)\\-1&((i,j)=(0,1),(0,N),(N,0),(N,N-1))\\0&(\text{otherwise})\end{array}\right.
\]
for $N\geq2$.
The affine Weyl group of type $A^{(1)}_N$ is defined by the generators $r_i\ (i=0,\ldots,N)$ with the fundamental relation
\[
	r_0^2 = r_1^2 = 1
\]
for $N=1$ and
\[
	r_i^2 = 1,\quad
	(r_ir_j)^{2-a_{i,j}} = 1\quad (i,j=0,\ldots,N,\ i\neq j)
\]
for $N\geq2$.

\begin{figure}[h]
	\begin{center}
	%\fbox{
	\begin{picture}(135,100)
		\put(85,67.5){\small$1$}\put(105,90){\small$2$}
		\put(128,45){\small$3$}\put(105,45){\small$4$}
		\put(85,22.5){\small$5$}\put(105,0){\small$6$}
		\put(28,0){\small$7$}\put(52,22.5){\small$8$}
		\put(28,45){\small$9$}\put(1,45){\small$10$}
		\put(25,90){\small$11$}\put(50,67.5){\small$12$}
		\put(100,93.5){\vector(-1,0){60}}%2->11
		\put(40,90){\vector(3,-1){41}}%11->1
		\put(64,77){\vector(3,1){36}}%12->2
		\put(81,71){\vector(-1,0){17}}%1->12
		\put(23,86){\vector(-1,-2){15}}%11->10
		\put(30.5,56){\vector(0,1){30}}%9->11
		\put(48,65){\vector(-1,-1){11}}%12->9
		\put(15,55){\vector(2,1){31}}%10->12
		\put(103,55){\vector(-1,1){10}}%4->1
		\put(108,86){\vector(0,-1){29}}%2->4
		\put(128,56){\vector(-1,2){15}}%3->2
		\put(95,70){\vector(2,-1){29}}%1->3
		\put(47,27){\vector(-2,1){32}}%8->10
		\put(9,41){\vector(1,-2){15}}%10->7
		\put(30.5,11){\vector(0,1){30}}%7->9
		\put(37,43){\vector(1,-1){11}}%9->8
		\put(124,44){\vector(-2,-1){30}}%3->5
		\put(93,33){\vector(1,1){10}}%5->4
		\put(108,41){\vector(0,-1){30}}%4->6
		\put(114,12){\vector(1,2){14}}%6->3	
		\put(61,26){\vector(1,0){20}}%8->5
		\put(81,21){\vector(-3,-1){41}}%5->7
		\put(100,8){\vector(-3,1){39}}%6->8
		\put(40,3){\vector(1,0){60}}%7->6
	\end{picture}
	%}
	\caption{Quiver $Q_{12}$}\label{Fig:q-Garnier}
	\end{center}
\end{figure}
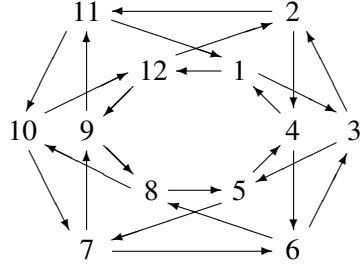

Based on the MOT construction in \cite {MOT1}, we formulate a group of birational transformations on $\mathbb{C}(y_i)$ which is isomorphic to the affine Weyl group of type $(A_5+A_1+A_1)^{(1)}$.
Let us consider the quiver $Q_{12}$ in Figure \ref{Fig:q-Garnier}.
Although this quiver has no $2$-cycles, it is considered to have $2$-cycles
\[
	2i+1 \to 2i+2 \to 2i+1\quad (i=0,\ldots,5).
\]
It also has $6$-cycles
\begin{align*}
	&1 \to 4 \to 5 \to 8 \to 9 \to 12 \to 1,\quad
	2 \to 3 \to 6 \to 7 \to 10 \to 11 \to 2, \\
	&1 \to 11 \to 9 \to 7 \to 5 \to 3 \to 1,\quad
	2 \to 12 \to 10 \to 8 \to 6 \to 4 \to 2.
\end{align*}
Note that, for any cycle $C$ in the above and any vertex $v$ outside of $C$, the number of arrows from $v$ to $C$ is equal to that from $C$ to $v$.
According to those cycles, we introduce compositions of mutations and permutations by
\begin{align*}
	&r_i = \mu_{2i+1}(2i+1,2i+2)\mu_{2i+1}\quad (i=0,\ldots,5), \\
	&s_0 = \mu_1\mu_4\mu_5\mu_8\mu_9(9,12)\mu_9\mu_8\mu_5\mu_4\mu_1,\quad
	s_1 = \mu_2\mu_3\mu_6\mu_7\mu_{10}(10,11)\mu_{10}\mu_7\mu_6\mu_3\mu_2, \\
	&s'_0 = \mu_1\mu_3\mu_5\mu_7\mu_9(9,11)\mu_9\mu_7\mu_5\mu_3\mu_1,\quad
	s'_1 = \mu_2\mu_4\mu_6\mu_8\mu_{10}(10,12)\mu_{10}\mu_8\mu_6\mu_4\mu_2.
\end{align*}
Note that the resulting elements do not depend on the orientations of the cycles.
Then they preserve the form of $Q_{12}$ and give birational transformations on $\mathbb{C}(y_i)$, which act on the coefficients as
\begin{align*}
	&r_i(y_{2i-1}) = \frac{y_{2i-1}y_{2i+2}(1+y_{2i+1})}{1+y_{2i+2}},\quad
	r_i(y_{2i}) = \frac{y_{2i}y_{2i+1}(1+y_{2i+2})}{1+y_{2i+1}}, \\
	&r_i(y_{2i+1}) = \frac{1}{y_{2i+2}},\quad
	r_i(y_{2i+2}) = \frac{1}{y_{2i+1}}, \\
	&r_i(y_{2i+3}) = \frac{y_{2i-1}y_{2i+3}(1+y_{2i})}{1+y_{2i-1}},\quad
	r_i(y_{2i+4}) = \frac{y_{2i}y_{2i+4}(1+y_{2i-1})}{1+y_{2i}}, \\
	&r_i(y_j) = y_j\quad (j\neq2i-1,\ldots,2i+4)
\end{align*}
and
\begin{align*}
	&s_0(y'_{2i-1}) = \frac{Y'_{2i-1}}{y'_{2i-3}Y'_{2i-5}},\quad
	s_0(y'_{2i}) = \frac{y'_{2i-3}y'_{2i-1}y'_{2i}Y'_{2i-5}}{Y'_{2i-1}}, \\
	&s_1(y'_{2i-1}) = \frac{y'_{2i-2}y'_{2i-1}y'_{2i}Y'_{2i-4}}{Y'_{2i}},\quad
	s_1(y'_{2i}) = \frac{Y'_{2i}}{y'_{2i-2}Y'_{2i-4}}, \\
	&s'_0(y'_{2i-1}) = \frac{Y_{2i-1}}{y_{2i+1}Y_{2i-5}},\quad
	s'_0(y'_{2i}) = \frac{y_{2i-1}y_{2i}y_{2i+1}Y_{2i-5}}{Y_{2i-1}}, \\
	&s'_1(y'_{2i-1}) = \frac{y_{2i-1}y_{2i}y_{2i+2}Y_{2i+4}}{Y_{2i}},\quad
	s'_1(y'_{2i}) = \frac{Y_{2i}}{y_{2i+2}Y_{2i+4}}
\end{align*}
for $i=0,\ldots,5$, where
\[
	Y_i = 1 + \sum_{j=1}^{5}\prod_{k=0}^{j-1}y_{i+2k},\quad
	Y'_i =1 +  \sum_{j=1}^{5}\prod_{k=0}^{j-1}y'_{i-2k}
\]
and
\[
	(y'_1,y'_2,y'_3,y'_4,y'_5,y'_6,y'_7,y'_8,y'_9,y'_{10},y'_{11},y'_{12}) = (y_1,y_2,y_4,y_3,y_5,y_6,y_8,y_7,y_9,y_{10},y_{12},y_{11}).
\]
Here the indices of the coefficients are congruent modulo $12$.
For example, the birational transformation $r_0$ is given as
\begin{align*}
	&\left(y_1,y_2,y_3,y_4,y_5,y_6,y_7,y_8,y_9,y_{10},y_{11},y_{12}\right) \\
	&\overset{\mu_1}{\mapsto}\ \left(y_1^{-1},y_2,\frac{y_3}{1+y_1^{-1}},(1+y_1)y_4,y_5,y_6,y_7,y_8,y_9,y_{10},(1+y_1)y_{11},\frac{y_{12}}{1+y_1^{-1}}\right) \\
	&\overset{(1,2)}{\mapsto}\ \left(y_2,y_1^{-1},\frac{y_3}{1+y_1^{-1}},(1+y_1)y_4,y_5,y_6,y_7,y_8,y_9,y_{10},(1+y_1)y_{11},\frac{y_{12}}{1+y_1^{-1}}\right) \\
	&\overset{\mu_1}{\mapsto}\ \left(y_2^{-1},y_1^{-1},\frac{y_3(1+y_2)}{1+y_1^{-1}},\frac{y_4(1+y_1)}{1+y_2^{-1}},y_5,y_6,y_7,y_8,y_9,y_{10},\frac{y_{11}(1+y_1)}{1+y_2^{-1}},\frac{y_{12}(1+y_2)}{1+y_1^{-1}}\right).
\end{align*}

We also introduce new variables by
\begin{align*}
	&\alpha_i = y_{2i+1}y_{2i+2}\quad (i=0,\ldots,5), \\
	&\beta_0 = y_1y_4y_5y_8y_9y_{12},\quad
	\beta_1 = y_2y_3y_6y_7y_{10}y_{11},\quad
	\beta'_0 = y_1y_3y_5y_7y_9y_{11},\quad
	\beta'_1 = y_2y_4y_6y_8y_{10}y_{12}
\end{align*}
and set
\[
	q = \prod_{i=0}^{5}\alpha_i = \beta_0\beta_1 = \beta'_0\beta'_1 = \prod_{i=1}^{12}y_i.
\]
Then the birational transformations act on them as
\begin{align*}
	&r_i(\alpha_j) = \alpha_j\alpha_i^{-a_{i,j}},\quad
	r_i(\beta_k) = \beta_k,\quad
	r_i(\beta'_k) = \beta'_k,\quad
	r_i(q) = q, \\
	&s_k(\alpha_j) = \alpha_j,\quad
	s_k(\beta_l) = \beta_l\beta_k^{-b_{k,l}},\quad
	s_k(\beta'_l) = \beta'_l,\quad
	s_k(q) = q, \\
	&s'_k(\alpha_j) = \alpha_j,\quad
	s'_k(\beta_l) = \beta_l,\quad
	s'_k(\beta'_l) = \beta'_l(\beta'_k)^{-b_{k,l}},\quad
	s'_k(q) = q
\end{align*}
for $i,j=0,\ldots,5$ and $k,l=0,1$, where $\left(a_{i,j}\right)_{i,j=0}^{5}$ and $\left(b_{k,l}\right)_{k,l=0}^{1}$ are the generalized Cartan matrices of type $A^{(1)}_5$ and $A^{(1)}_1$ respectively.
It is equivalent to the action of the simple reflections on the simple roots and the null root.
As can be seen from this, the following fact holds.

\begin{fact}[\cite{MOT1}]
The groups $\langle r_0,\ldots,r_5\rangle$, $\langle s_0,s_1\rangle$ and $\langle s'_0,s'_1\rangle$ are isomorphic to the affine Weyl groups of type $A^{(1)}_5$, $A^{(1)}_1$ and $A^{(1)}_1$ respectively.
Moreover, any two groups are mutually commutative.
\end{fact}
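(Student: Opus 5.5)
To prove the Fact we need two things: the generators satisfy the defining relations of the affine Weyl groups, and the resulting homomorphisms are injective. Commutativity of the three groups is checked along the way.

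We work directly with the explicit birational actions of $r_i$, $s_k$, $s'_k$ on $\mathbb{C}(y_1,\ldots,y_{12})$ listed above. The first step is to confirm that each generator maps $Q_{12}$ back to itself. This uses the property of the $2$-cycles and $6$-cycles stated above: every vertex outside a cycle $C$ sends as many arrows into $C$ as it receives from $C$. Consequently the conjugation $\mu_{\cdots}(i,j)\mu_{\cdots}$ restores the exchange matrix $\Lambda$, and the operators compose as automorphisms of a single field.

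Next we verify the relations. Involutivity is the easy part. For $r_i$ it is immediate from the formulas: $r_i(y_{2i+1})=y_{2i+2}^{-1}$ and $r_i(y_{2i+2})=y_{2i+1}^{-1}$, and applying $r_i$ to the neighbouring formulas cancels the factors $(1+y_{2i+1})/(1+y_{2i+2})$. For $s_k$ and $s'_k$ we argue structurally rather than by computation. Each has the form $w\,\sigma\,w^{-1}$, where $w$ is a product of mutations and $\sigma$ is a transposition. Since $\mu_j^2=\mathrm{id}$ and $\sigma^2=\mathrm{id}$, we get $(w\sigma w^{-1})^2=\mathrm{id}$.

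The braid relations $(r_ir_{i\pm1})^3=1$ and the commutations $r_ir_j=r_jr_i$ for non-adjacent $i,j$ are local statements. They involve only the variables $y_{2i-1},\ldots,y_{2i+6}$. For non-adjacent $i,j$ the supports overlap only in variables that both maps rescale multiplicatively by factors independent of one another, so commutation is a short check. The braid relation is a finite rational-function identity and can be verified directly, with symbolic algebra if needed. The commutativity of $\langle r_i\rangle$ with $\langle s_k\rangle$ and $\langle s'_k\rangle$, and of $\langle s_k\rangle$ with $\langle s'_k\rangle$, needs a different approach, because the formulas involve the long sums $Y_i$ and $Y'_i$. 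Here we plan to use the cluster-algebraic principle behind the MOT construction: two mutation sequences commute as transformations of $(\Lambda,y)$ if they do so on the tropical (exchange-matrix plus $g$-vector/monomial) level, and periodicity results guarantee that the tropical level determines the birational map. In practice we check each commutation on the explicit formulas, using the fact that $r_i$ maps $Y_j$ to a multiple of another $Y$. This is the step where we expect the main computational obstacle.

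Finally, for faithfulness we use the stated action on the multiplicative variables $\alpha_i$, $\beta_k$, $\beta'_k$. It coincides with the standard action of simple reflections on simple roots, written multiplicatively. Because $\alpha_0,\ldots,\alpha_5,\beta_0,\beta'_0$ together with $q$ are multiplicatively independent monomials, the restriction to the subfield they generate gives a representation of $W(A_5^{(1)})\times W(A_1^{(1)})\times W(A_1^{(1)})$ on the root lattice. The standard geometric representation of an affine Weyl group on its root lattice, with $\delta\leftrightarrow q$, is faithful. Hence each group generated above is isomorphic to the corresponding affine Weyl group, and mutual commutativity was established in the previous step.
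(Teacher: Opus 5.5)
Your proposal is correct and follows the route the paper relies on. The Coxeter relations and mutual commutativity come from the MOT cycle construction, which the paper simply cites from \cite{MOT1}; your reduction to a tropical $c$-vector check via periodicity/synchronicity is a standard way to make that rigorous. Faithfulness is then read off from the standard reflection action on $\alpha_i,\beta_k,\beta'_k$ with $q$ as null root, and you correctly note that this requires the relations, not just the root action.

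There are two minor slips, neither of which affects the argument:
\begin{itemize}
\item The multiplicatively independent monomials are $\alpha_0,\ldots,\alpha_5,\beta_0,\beta'_0$. They are not independent ``together with $q$'', since $q=\prod_i\alpha_i$.
\item $r_i$ does not in general send a $Y'_j$ to a multiple of a single $Y'$. For instance, $(1+y_1)\,r_0(Y'_{11})=Y'_1+y_1y_2y_{12}Y'_9$. So the commutativity step should rest on the tropical check, not on that shortcut.
\end{itemize}
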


In the last, we consider compositions of permutations and a reversal by
\begin{align*}
	&\pi_1 = (2,4,6,8,10,12)(1,3,5,7,9,11), \\
	&\pi_2 = (1,2)(3,4)(5,6)(7,8)(9,10)(11,12), \\
	&\pi_3 = \iota(1,12)(2,11)(3,10)(4,9)(5,8)(6,7),
\end{align*}
where $(1,3,5,7,9,11)$ is the cyclic permutation given by
\[
	(1,3,5,7,9,11) = (1,3)(1,5)(1,7)(1,9)(1,11).
\]
Then they preserve the form of $Q_{12}$ and give birational transformations on $\mathbb{C}(y_i)$, which act on the coefficients, the simple roots and the null root as
\begin{align*}
	&\pi_1(y_i) = y_{i+2}\quad(i=1,\ldots,10),\quad
	\pi_1(y_{11}) = y_1,\quad
	\pi_1(y_{12}) = y_2, \\
	&\pi_2(y_{2i-1}) = y_{2i},\quad
	\pi_2(y_{2i}) = y_{2i-1}\quad (i=1,\ldots,6), \\
	&\pi_3(y_i) = \frac{1}{y_{13-i}}\quad (i=1,\ldots,12)
\end{align*}
and
\begin{align*}
	&\pi_1(\alpha_i) = \alpha_{i+1}\quad (i=0,\ldots,4),\quad
	\pi_1(\alpha_5) = \alpha_0, \\
	&\pi_1(\beta_0) = \beta_1,\quad
	\pi_1(\beta_1) = \beta_0,\quad
	\pi_1(\beta'_0) = \beta'_0,\quad
	\pi_1(\beta'_1) = \beta'_1,\quad
	\pi_1(q) = q, \\
	&\pi_2(\alpha_i) = \alpha_i\quad (i=0,\ldots,5), \\
	&\pi_2(\beta_0) = \beta_1,\quad
	\pi_2(\beta_1) = \beta_0,\quad
	\pi_2(\beta'_0) = \beta'_1,\quad
	\pi_2(\beta'_1) = \beta'_0,\quad
	\pi_2(q) = q, \\
	&\pi_3(\alpha_i) = \frac{1}{\alpha_{5-i}}\quad (i=0,\ldots,5), \\
	&\pi_3(\beta_0) = \frac{1}{\beta_0},\quad
	\pi_3(\beta_1) = \frac{1}{\beta_1},\quad
	\pi_3(\beta'_0) = \frac{1}{\beta'_1},\quad
	\pi_3(\beta'_1) = \frac{1}{\beta'_0},\quad
	\pi_3(q) = \frac{1}{q}.
\end{align*}
Those transformations correspond to those called the Dynkin diagram automorphisms.
The group generated by the simple reflections and the Dynkin diagram automorphisms is decomposed into the semi-direct product as
\[
	\langle r_0,\ldots,r_5,s_0,s_1,s'_0,s'_1,\pi_1,\pi_2,\pi_3\rangle = \langle r_0,\ldots,r_5,s_0,s_1,s'_0,s'_1\rangle \rtimes \langle\pi_1,\pi_2,\pi_3\rangle.
\]
Hence we call the group $\langle r_0,\ldots,r_5,s_0,s_1,s'_0,s'_1,\pi_1,\pi_2\rangle$ the extended affine Weyl group of type $(A_5+A_1+A_1)^{(1)}$.

%%%%%%%%%%%%%%%%%%%%%%%%%%%%%%%%%%%%%%%%%%%%%%%%%%%%%%%%%%%%%%%%
% Section 4
%%%%%%%%%%%%%%%%%%%%%%%%%%%%%%%%%%%%%%%%%%%%%%%%%%%%%%%%%%%%%%%%
\section{$q$-Garnier system}\label{Sec:q-Garnier}

In the following, we regard the simple reflections and the Dynkin diagram automorphisms as automorphisms of $\mathbb{C}(y_i)$.
For example, the composition $r_0r_1$ acts on the simple root $\alpha_0$ as
\[
	r_0r_1(\alpha_0) = r_0(\alpha_0\alpha_1) = r_0(\alpha_0)r_0(\alpha_1) = \alpha_1.
\]

Following the previous work \cite{OS1}, we consider elements of the extended affine Weyl group of type $(A_5+A_1+A_1)^{(1)}$
\begin{align*}
	&T_i = r_ir_{i+1}r_{i+2}r_{i+3}r_{i+4}r_{i+5}r_{i+4}r_{i+3}r_{i+2}r_{i+1}\quad (i=0,\ldots,5), \\
	&U_0 = s_0s_1,\quad
	U_1 = s_1s_0,\quad
	U'_0 = s'_0s'_1,\quad
	U'_1 = s'_1s'_0,\quad
	V = \pi_1r_5\ldots r_1s_1,\quad
	V' = \pi_2\pi_1r_5\ldots r_1s'_1,
\end{align*}
where the indices of $r_i$ are congruent modulo $6$.
Then they are called the translations and act on the simple roots as
\begin{align*}
	&T_i(\alpha_j) = q^{-\delta_{j,i-1}+2\delta_{j,i}-\delta_{j,i+1}}\alpha_j,\quad
	T_i(\beta_l) = \beta_l,\quad
	T_i(\beta'_l) = \beta'_l, \\
	&U_k(\alpha_j) = \alpha_j,\quad
	U_k(\beta_l) = q^{4\delta_{l,k}-2}\beta_l,\quad
	U_k(\beta'_l) = \beta'_l, \\
	&U'_k(\alpha_j) = \alpha_j,\quad
	U'_k(\beta_l) = \beta_l,\quad
	U'_k(\beta'_l) = q^{4\delta_{l,k}-2}\beta'_l, \\
	&V(\alpha_j) = q^{\delta_{j,0}-\delta_{j,1}}\alpha_j,\quad
	V(\beta_l) = q^{\delta_{l,0}-\delta_{l,1}}\beta_l,\quad
	V(\beta'_l) = \beta'_l, \\
	&V'(\alpha_j) = q^{\delta_{j,0}-\delta_{j,1}}\alpha_j,\quad
	V'(\beta_l) = \beta_l,\quad
	V'(\beta'_l) = q^{\delta_{l,0}-\delta_{l,1}}\beta'_l
\end{align*}
for $i,j=0,\ldots,5$ and $k,l=0,1$, where $\delta_{i,j}$ is the Kronecker delta whose indices are congruent modulo $6$.
Those translations generate an abelian normal subgroup of the extended affine Weyl group of type $(A_5+A_1+A_1)^{(1)}$.
This subgroup provides a system of fourth order non-linear $q$-difference equations with multiple evolution directions.
We call this system the fourth order $q$-Garnier system in this article.

We choose a translation
\[
	\tau_c = (V')^{-1}VU_1 = \pi_2s'_0s_0
\]
which acts on the simple roots as
\[
	\tau_c(\alpha_i) = \alpha_i\quad (i=0,\ldots,5),\quad
	\tau_c(\beta_0) = q^{-1}\beta_0,\quad
	\tau_c(\beta_1) = q\beta_1,\quad
	\tau_c(\beta'_0) = q^{-1}\beta'_0,\quad
	\tau_c(\beta'_1) = q\beta'_1.
\]
The action of $\tau_c$ on the coefficients provides an evolution direction of the $q$-Garnier system with one independent variable $\beta_0$, four dependent variables $y_1,y_3,y_5,y_7$ and seven constants $\alpha_0,\ldots,\alpha_5,\beta_0^{-1}\beta'_0$.
We do not give its explicit formula here.
Assume that
\[
	y_3 = -1,\quad
	y_4 = -\alpha_1,\quad
	y_7 = -1,\quad
	y_8 = -\alpha_3,\quad
	y_{11} = -1,\quad
	y_{12} = -\alpha_5.
\]
Note that it contains a constraint between parameters
\[
	\beta'_0 = \frac{\beta_0}{\alpha_1\alpha_3\alpha_5}.
\]
Then we have a system of $q$-difference equations called a $q$-Riccati system
\begin{align*}
	&\tau_c(y_1) = \frac{y_1\{y_1y_5y_9+\alpha_0(1-\alpha_1)y_5y_9-\alpha_0\alpha_1\alpha_2(1-\alpha_3)y_9+\alpha_0\alpha_1\alpha_2\alpha_3\alpha_4\}}{\alpha_0\alpha_1\{y_1y_5y_9+\alpha_2(1-\alpha_3)y_1y_9-\alpha_2\alpha_3\alpha_4(1-\alpha_5)y_1+\alpha_0\alpha_2\alpha_3\alpha_4\alpha_5\}}, \\
	&\tau_c(y_5) = \frac{y_5\{y_1y_5y_9+\alpha_2(1-\alpha_3)y_1y_9-\alpha_2\alpha_3\alpha_4(1-\alpha_5)y_1+\alpha_0\alpha_2\alpha_3\alpha_4\alpha_5\}}{\alpha_2\alpha_3\{y_1y_5y_9+\alpha_4(1-\alpha_5)y_1y_5-\alpha_0\alpha_4\alpha_5(1-\alpha_1)y_5+\alpha_0\alpha_1\alpha_2\alpha_4\alpha_5\}}, \\
	&\tau_c(y_9) = \frac{y_9\{y_1y_5y_9+\alpha_4(1-\alpha_5)y_1y_5-\alpha_0\alpha_4\alpha_5(1-\alpha_1)y_5+\alpha_0\alpha_1\alpha_2\alpha_4\alpha_5\}}{\alpha_4\alpha_5\{y_1y_5y_9+\alpha_0(1-\alpha_1)y_5y_9-\alpha_0\alpha_1\alpha_2(1-\alpha_3)y_9+\alpha_0\alpha_1\alpha_2\alpha_3\alpha_4\}}.
\end{align*}
We can see that the $q$-Riccati system is second order from a relation
\[
	\tau_c(y_1y_5y_9) = q^{-1}y_1y_5y_9.
\]
Recall that
\[
	y_2 = \frac{\alpha_0}{y_1},\quad
	y_6 = \frac{\alpha_2}{y_5},\quad
	y_{10} = \frac{\alpha_4}{y_9}.
\]

On the other hand, we consider a system of linear $q$-difference equations
\begin{equation}\label{eq:HGE_12}
	x(q^{-1}t) = \left(\frac{A_0}{1-t}+\frac{tA_1}{1-t}\right)x(t)
\end{equation}
with $3\times3$ matrices
\begin{align*}
	&A_0 = \begin{bmatrix}1&0&0\\1-\alpha_5&\alpha_0\alpha_5&0\\1-\alpha_5&\alpha_0\alpha_5(1-\alpha_1)&\alpha_0\alpha_1\alpha_2\alpha_5\end{bmatrix}, \\
	&A_1 = \begin{bmatrix}-\alpha_5&\alpha_0\alpha_5(1-\alpha_1)&\alpha_0\alpha_1\alpha_2\alpha_5(1-\alpha_3)\\0&-\alpha_0\alpha_1\alpha_5&\alpha_0\alpha_1\alpha_2\alpha_5(1-\alpha_3)\\0&0&-\alpha_0\alpha_1\alpha_2\alpha_3\alpha_5\end{bmatrix}.
\end{align*}
If a vector of unknown functions $x(t)={}^t[x_0(t),x_1(t),x_2(t)]$ satisfies system \eqref{eq:HGE_12}, then functions and an independent variable
\[
	y_1 = -\frac{x_0}{x_1},\quad
	y_5 = -\frac{x_1}{x_2},\quad
	y_9 = -qt\frac{x_2}{x_0},\quad
	\beta_0 = q\alpha_1\alpha_3\alpha_5t
\]
satisfy the $q$-Riccati system.
Moreover, a solution to system \eqref{eq:HGE_12} is expressed in terms of the basic hypergeometric series ${}_3\phi_2$.
The above results are summarized as the following fact.

\begin{fact}[\cite{IS,Suz2}]
The evolution direction of the $q$-Garnier system provided by $\tau_c$ admits a particular solution
\begin{align*}
	&y_1 = -\frac{x_0}{x_1},\quad
	y_2 = -\frac{\alpha_0x_1}{x_0},\quad
	y_3 = -1,\quad
	y_4 = -\alpha_1,\quad
	y_5 = -\frac{x_1}{x_2},\quad
	y_6 = -\frac{\alpha_2x_2}{x_1}, \\
	&y_7 = -1,\quad
	y_8 = -\alpha_3,\quad
	y_9 = -\frac{qtx_2}{x_0},\quad
	y_{10} = -\frac{\alpha_4x_0}{qtx_2},\quad
	y_{11} = -1,\quad
	y_{12} = -\alpha_5
\end{align*}
with
\begin{align*}
	&x_0 = {}_3\phi_2\left[\begin{array}{c}\alpha_5,\alpha_0\alpha_1\alpha_5,\alpha_0\alpha_1\alpha_2\alpha_3\alpha_5\\\alpha_0\alpha_5,\alpha_0\alpha_1\alpha_2\alpha_5\end{array};q,qt\right], \\
	&x_1 = \frac{1-\alpha_5}{1-\alpha_0\alpha_5}{}_3\phi_2\left[\begin{array}{c}q\alpha_5,\alpha_0\alpha_1\alpha_5,\alpha_0\alpha_1\alpha_2\alpha_3\alpha_5\\q\alpha_0\alpha_5,\alpha_0\alpha_1\alpha_2\alpha_5\end{array};q,qt\right], \\
	&x_2 = \frac{(1-\alpha_5)(1-\alpha_0\alpha_1\alpha_5)}{(1-\alpha_0\alpha_5)(1-\alpha_0\alpha_1\alpha_2\alpha_5)}{}_3\phi_2\left[\begin{array}{c}q\alpha_5,q\alpha_0\alpha_1\alpha_5,\alpha_0\alpha_1\alpha_2\alpha_3\alpha_5\\q\alpha_0\alpha_5,q\alpha_0\alpha_1\alpha_2\alpha_5\end{array};q,qt\right],
\end{align*}
where
\[
	{}_3\phi_2\left[\begin{array}{c}a_1,a_2,a_3\\b_1,b_2\end{array};q,t\right] = \sum_{n=0}^{\infty}\frac{(a_1;q)_n(a_2;q)_n(a_3;q)_n}{(b_1;q)_n(b_2;q)_n(q;q)_n}t^n,\quad
	(a;q)_n = \prod_{i=1}^{n}(1-aq^{i-1}).
\]
\end{fact}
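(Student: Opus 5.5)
The argument splits into three independent verifications: the Riccati reduction, the linear system, and the ${}_3\phi_2$ solution of that system.

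\emph{Step 1: the $q$-Riccati reduction.} Write $\tau_c=\pi_2s'_0s_0$ as an explicit birational map on the $y_i$. For this we compose the formulas already given for $s_0$ and $s'_0$ in the primed and unprimed variables, using the polynomials $Y_i,Y'_i$, and then apply $\pi_2$. We then impose $y_3=y_7=y_{11}=-1$, $y_4=-\alpha_1$, $y_8=-\alpha_3$, $y_{12}=-\alpha_5$. The first thing to check is that this locus is invariant under $\tau_c$. Under the substitution many partial sums in $Y_i$ telescope, since consecutive factors $(1+y_{2k+1})$ vanish. The images of $y_3,y_4,y_7,y_8,y_{11},y_{12}$ should then reduce to the same constants, using $\tau_c(\alpha_i)=\alpha_i$. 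This forces the constraint $\beta'_0=\beta_0/(\alpha_1\alpha_3\alpha_5)$, which is consistent because $\tau_c$ scales $\beta_0$ and $\beta'_0$ by the same factor $q^{-1}$. The remaining images $\tau_c(y_1),\tau_c(y_5),\tau_c(y_9)$ are then read off and simplified into the displayed Riccati form.

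\emph{Step 2: the linear system gives the Riccati system.} Set $y_1=-x_0/x_1$, $y_5=-x_1/x_2$, $y_9=-qt\,x_2/x_0$, so that $y_1y_5y_9=qt$ up to sign. Since $\beta_0=q\alpha_1\alpha_3\alpha_5t$ and $\tau_c(\beta_0)=q^{-1}\beta_0$, the action of $\tau_c$ corresponds to $t\mapsto q^{-1}t$. This matches $\tau_c(y_1y_5y_9)=q^{-1}y_1y_5y_9$, and it means $\tau_c(y_1)=-x_0(q^{-1}t)/x_1(q^{-1}t)$. From \eqref{eq:HGE_12}, each $x_i(q^{-1}t)$ is a linear combination of $x_0,x_1,x_2$ with coefficients linear in $t$, all divided by the common factor $1-t$. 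Dividing numerator and denominator by suitable products of the $x_j$ and substituting $t$ through $y_1y_5y_9$ gives rational expressions in $y_1,y_5,y_9$. Comparing these with the three Riccati equations is a direct polynomial identity check.

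\emph{Step 3: the ${}_3\phi_2$ solution of the linear system.} Write $x_i=\sum_n c^{(i)}_n(qt)^n$ and multiply \eqref{eq:HGE_12} by $1-t$. The system becomes two-term recursions in $n$ for each row, because $A_0$ is lower triangular and $A_1$ is upper triangular. Each recursion can then be verified on coefficients using contiguity relations of the form $(a;q)_{n+1}=(1-a)(qa;q)_n$. An alternative is to note that the first component satisfies the third-order $q$-hypergeometric equation for ${}_3\phi_2$, with the other components obtained by $q$-shifts; this identifies the prefactors $(1-\alpha_5)/(1-\alpha_0\alpha_5)$ and so on.

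The main obstacle is Step 1: making $\tau_c$ explicit and confirming invariance of the specialization locus. It involves the long sums $Y_i,Y'_i$ and the index reshuffling between $y$ and $y'$. I would organize this computation by first computing $s_0$ restricted to the locus, then $s'_0$, and checking at each stage which factors $1+y_{2k+1}$ vanish.
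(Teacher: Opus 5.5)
Your three steps are correct and follow the route the paper lays out before stating the Fact, with the details deferred to \cite{IS,Suz2}: the specialization reduces $\tau_c$ to the $q$-Riccati system, system \eqref{eq:HGE_12} yields that Riccati system under $t\mapsto q^{-1}t$ (using $q=\prod_{i}\alpha_i$ to eliminate $q$), and the ${}_3\phi_2$ vector solves \eqref{eq:HGE_12}, which reduces row by row to identities among the coefficient ratios. One caution for Step 1: in the paper's convention $\tau_c(y_i)=\pi_2(s'_0(s_0(y_i)))$, so evaluating at a point of the locus means applying the substitution of $\pi_2$ first, then $s'_0$, then $s_0$; restricting $s_0$ to the locus first would give $\tau_c^{-1}$ (i.e.\ $t\mapsto qt$), and the relation $\beta'_0=\beta_0/(\alpha_1\alpha_3\alpha_5)$ holds identically on the locus rather than being forced by invariance.
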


%%%%%%%%%%%%%%%%%%%%%%%%%%%%%%%%%%%%%%%%%%%%%%%%%%%%%%%%%%%%%%%%
% Section 5
%%%%%%%%%%%%%%%%%%%%%%%%%%%%%%%%%%%%%%%%%%%%%%%%%%%%%%%%%%%%%%%%
\section{Degeneration of the $q$-Garnier system: from 12 vertices to 11 vertices}\label{Sec:Degeneration_12_11}

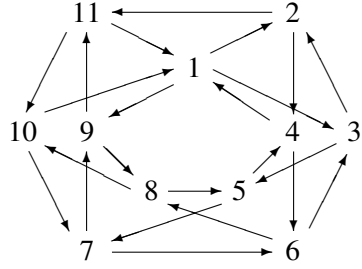
\begin{figure}[h]
	\begin{center}
	%\fbox{
	\begin{picture}(135,100)
		\put(68,69){\small$1$}\put(105,90){\small$2$}
		\put(128,45){\small$3$}\put(105,45){\small$4$}
		\put(85,22.5){\small$5$}\put(105,0){\small$6$}
		\put(52,22.5){\small$8$}\put(28,0){\small$7$}
		\put(28,45){\small$9$}\put(1,45){\small$10$}
		\put(25,90){\small$11$}
		\put(61,26){\vector(1,0){20}}%8->5
		\put(100,93.5){\vector(-1,0){60}}%2->11
		\put(40,89){\vector(2,-1){23}}%11->1
		\put(77,78){\vector(2,1){23}}%1->2
		\put(23,86){\vector(-1,-2){15}}%11->10
		\put(30.5,56){\vector(0,1){30}}%9->11
		\put(63,66){\vector(-2,-1){24}}%1->9
		\put(15,56){\vector(3,1){48}}%10->1
		\put(99,53){\vector(-3,2){21}}%4->1
		\put(108,86){\vector(0,-1){31}}%2->4
		\put(128,56){\vector(-1,2){15}}%3->2
		\put(78,72){\vector(2,-1){45}}%1->3
		\put(47,27){\vector(-2,1){32}}%8->10
		\put(9,41){\vector(1,-2){15}}%10->7
		\put(30.5,11){\vector(0,1){30}}%7->9
		\put(37,43){\vector(1,-1){11}}%9->8
		\put(124,44){\vector(-2,-1){30}}%3->5
		\put(93,33){\vector(1,1){10}}%5->4
		\put(108,41){\vector(0,-1){30}}%4->6
		\put(114,12){\vector(1,2){14}}%6->3	
		\put(81,21){\vector(-3,-1){41}}%5->7
		\put(100,8){\vector(-3,1){39}}%6->8
		\put(40,3){\vector(1,0){60}}%7->6
	\end{picture}
	%}
	\caption{Quiver $Q_{11}$}\label{Fig:deg_q-Garnier_11}
	\end{center}
\end{figure}

Since there exist $24$ arrows in the quiver $Q_{12}$, we can consider $24$ confluences from $Q_{12}$.
However, it is enough to consider two confluences $12\to1$ and $12\to2$ thanks to the symmetry of $Q_{12}$.
And they reduce to the same quiver $Q_{11}$ in Figure \ref{Fig:deg_q-Garnier_11}.
More precisely, we need the permutation $(1,2,11)(3,9,4,10)(5,8,6,7)$ after the confluence $12\to2$.
In this section, we investigate the confluence $12\to1$.

In a similar manner to Section \ref{Sec:Affine_Weyl}, we obtain simple reflections
\begin{align*}
	&r_0 = \mu_1\mu_2(2,11)\mu_2\mu_1,\quad
	r_i = \mu_{2i+1}(2i+1,2i+2)\mu_{2i+1}\quad (i=1,\ldots,4), \\
	&s_0 = \mu_1\mu_4\mu_5\mu_8(8,9)\mu_8\mu_5\mu_4\mu_1,\quad
	s_1 = \mu_2\mu_3\mu_6\mu_7\mu_{10}(10,11)\mu_{10}\mu_7\mu_6\mu_3\mu_2,
\end{align*}
simple roots
\[
	\alpha_0 = y_1y_2y_{11},\quad
	\alpha_i = y_{2i+1}y_{2i+2}\quad (i=1,\ldots,4),\quad
	\beta_0 = y_1y_4y_5y_8y_9,\quad
	\beta_1 = y_2y_3y_6y_7y_{10}y_{11}
\]
and a null root
\[
	q = \prod_{i=0}^{4}\alpha_i = \beta_0\beta_1 = \prod_{i=1}^{11}y_i
\]
from the quiver $Q_{11}$.
We omit the actions of the simple reflections on the coefficients, the simple roots and the null root here because they are similar to those given in Section \ref{Sec:Affine_Weyl}.

\begin{fact}[\cite{MOT1}]
The groups $\langle r_0,\ldots,r_4\rangle$ and $\langle s_0,s_1\rangle$ are isomorphic to the affine Weyl groups of type $A^{(1)}_4$ and $A^{(1)}_1$ respectively.
Moreover, those two groups are mutually commutative.
\end{fact}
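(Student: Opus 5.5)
We will prove the Fact by reducing it to the general MOT criterion, in the same way as for $Q_{12}$ in Section \ref{Sec:Affine_Weyl}. Each generator is a conjugate $\mu_{c}\,(i,j)\,\mu_{c}^{-1}$, where $\mu_c$ is the composition of mutations along a cycle of $Q_{11}$ and $(i,j)$ swaps the last two vertices of that cycle. The relevant cycles are the following.
\begin{itemize}
\item The $3$-cycle $1\to2\to11\to1$ for $r_0$.
\item The $2$-cycles $2i+1\leftrightarrow2i+2$ for $r_i$ with $i=1,\ldots,4$.
\item The $5$-cycle $1\to9\to8\to5\to4\to1$ for $s_0$.
\item The $6$-cycle $2\to3\to6\to7\to10\to11\to2$ for $s_1$.
\end{itemize}

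\textbf{Step 1: each generator preserves $Q_{11}$ and is an involution.} First we check the balancing condition on Figure \ref{Fig:deg_q-Garnier_11}: for every cycle $C$ above and every vertex $v\notin C$, the number of arrows from $v$ to $C$ equals the number from $C$ to $v$. Under this condition, mutating successively along the cycle, applying the transposition, and mutating back returns $\Lambda$ to itself. This is the combinatorial lemma underlying \cite{MOT1}, so each generator is a quiver automorphism acting birationally on $\mathbb{C}(y_i)$. Since the generator has the form $g(i,j)g^{-1}$, its square is the identity, which gives $r_i^2=s_k^2=1$.

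\textbf{Step 2: action on the multiplicative roots.} We compute the action on $\alpha_0=y_1y_2y_{11}$, $\alpha_i=y_{2i+1}y_{2i+2}$, $\beta_0=y_1y_4y_5y_8y_9$ and $\beta_1=y_2y_3y_6y_7y_{10}y_{11}$. Each root is the product of the coefficients around its cycle. A mutation $\mu_k$ multiplies $y_l$ by $(1+y_k^{\pm1})^{\lambda_{l,k}}$, so the product over a cycle changes only through the skew-symmetric data. The balancing condition then makes the factors $(1+\cdot)$ cancel on any product over a cycle.

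Concretely, we expect the following.
\begin{itemize}
\item $r_i(\alpha_j)=\alpha_j\alpha_i^{-a_{i,j}}$, where $A=(a_{i,j})$ is of type $A^{(1)}_4$; the cycle adjacencies $0\!-\!1\!-\!2\!-\!3\!-\!4\!-\!0$ are read off from shared arrows.
\item $r_i(\beta_k)=\beta_k$.
\item $s_k(\beta_l)=\beta_l\beta_k^{-b_{k,l}}$ and $s_k(\alpha_j)=\alpha_j$.
\end{itemize}
These are routine monomial computations, done as in the $r_0$ example of Section \ref{Sec:Affine_Weyl}. The only new feature is the $3$-cycle and $5$-cycle created by merging $12$ into $1$.

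\textbf{Step 3: relations and faithfulness.} The braid relations $(r_ir_j)^{3}=1$ for adjacent $i,j$, the relation $(r_ir_j)^2=1$ otherwise, and $[r_i,s_k]=1$ must be verified on the $y_i$ themselves, not just on the roots. We do this with the MOT mechanism rather than brute force. For commuting pairs, the two cycles satisfy the balancing condition relative to each other, and after conjugation their supports decouple. For adjacent pairs, we rewrite $r_ir_jr_i$ as the conjugate of $r_j$ by $r_i$ and use that $r_i$ maps the cycle of $r_j$ to the cycle of $r_ir_j$; this gives $r_ir_jr_i=r_jr_ir_j$. Faithfulness is then immediate. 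The action on the roots of Step 2 coincides with the reflection representation of $A^{(1)}_4$ and $A^{(1)}_1$ on the root lattice, which is faithful, so the generated groups are not proper quotients of the Coxeter groups.

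\textbf{Main obstacle.} The difficult step is the braid relation involving $r_0$, whose cycle has length $3$ and overlaps the $5$-cycle of $s_0$ at vertex $1$. Here the commutation $[r_0,s_0]=1$ is not a disjoint-support argument, and the balancing check is less transparent. If the conceptual argument stalls, we would first verify $r_0s_0=s_0r_0$ and $(r_0r_1)^3=1$ directly on the coefficients $y_1,y_2,y_{11},y_9,y_4$, where the actions are nontrivial. Alternatively, we would derive these relations from the $Q_{12}$ relations via the confluence limit $y_{12}\to\varepsilon^{-1}y_1$, $y_1\to\varepsilon$, $\varepsilon\to0$, checking that $r_0$ of $Q_{12}$ degenerates to the $r_0$ above.
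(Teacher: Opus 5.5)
\textbf{There is a genuine gap: your Step 3 does not establish the relations, and the relations are the content of this Fact.}

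Your Steps 1 and 2 and your faithfulness argument are fine in outline. The balancing condition does hold for all seven cycles of $Q_{11}$. The roots $\alpha_0,\ldots,\alpha_4,\beta_0,\beta_1$ are multiplicatively independent monomials. The root-lattice representation of an affine Weyl group is faithful.

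Your commuting-pairs argument (``supports decouple'') only covers cycles that are vertex-disjoint and have no arrows between them, such as $r_0,r_2$ or $r_1,r_3$. Every pair $(r_i,s_k)$ overlaps:
\begin{itemize}
\item for $i\geq1$, the 2-cycle $\{2i+1,2i+2\}$ has one vertex in $\{1,4,5,8,9\}$ and the other in $\{2,3,6,7,10,11\}$;
\item the 3-cycle $\{1,2,11\}$ meets both of these sets.
\end{itemize}
So $[r_0,s_0]=1$ is not an isolated obstacle. None of the ten relations $[r_i,s_k]=1$ is covered, which means the whole ``mutually commutative'' half of the statement is unproved. The mutation sequences cannot simply be reordered, because they involve adjacent vertices.

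Your braid-relation argument does not work either. $r_ir_j$ is not a reflection and has no cycle. $r_i$ is a mutation sequence, not a vertex permutation, so conjugating by it does not carry the cycle of $r_j$ to another cycle: $g\mu_kg^{-1}\neq\mu_{g(k)}$ unless $g$ is a permutation. Using $g\,r_\beta\,g^{-1}=r_{g(\beta)}$ at the birational level assumes what you are trying to prove. Moreover, for $(r_0,r_1)$ and $(r_0,r_4)$ there is no oriented cycle of $Q_{11}$ through exactly $\{1,2,3,4,11\}$ or exactly $\{1,2,9,10,11\}$. So there is not even a candidate cycle carrying $\alpha_0\alpha_1$ or $\alpha_0\alpha_4$.

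Your fallbacks do not close the gap as stated. Checking $[r_0,s_0]=1$ on $y_1,y_2,y_{11},y_9,y_4$ is not enough. Every vertex outside $\{1,4,5,8,9\}$ is joined to that cycle, so $s_0$ moves all eleven coefficients, and $r_0$ moves $y_1,y_2,y_3,y_4,y_9,y_{10},y_{11}$.

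The confluence route can work, with two corrections:
\begin{itemize}
\item The element of $Q_{12}$ that degenerates to $r_0$ is $r_0r_5r_0$ (the reflection in $\alpha_0\alpha_5$), not $r_0$. Its relations in $Q_{12}$ with $r_1,\ldots,r_4,s_0,s_1$ are exactly the $A^{(1)}_4$ relations and the commutativity you need.
\item You must show that taking the limit commutes with composition. Work in the coordinates $(y_1y_{12},y_2,\ldots,y_{11},\varepsilon)$ with $\varepsilon=y_1$. Each of $r_0r_5r_0,r_1,\ldots,r_4,s_0,s_1$ must extend to $\varepsilon=0$. It must map the divisor $\varepsilon=0$ into itself, so its image of $y_1$ must be $O(\varepsilon)$. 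Its restriction there must be the corresponding generator of $Q_{11}$. This has to be checked for every generator, not only $r_0$.
\end{itemize}

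The paper gives no independent proof. It treats $Q_{11}$ as another instance of the MOT construction and takes the Coxeter relations and the commutativity from \cite{MOT1}. To close the gap, do one of the following: invoke that result with its hypotheses verified on $Q_{11}$; carry out the confluence argument with the checks above; or verify the relations explicitly on all eleven coefficients.
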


In addition, we introduce Dynkin diagram automorphisms
\[
	\pi_1 = (1,3,5,7,9,11,2,4,6,8,10)\mu_2,\quad
	\pi_2 = \iota(2,11)(3,10)(4,9)(5,8)(6,7)
\]
and a variable
\[
	\gamma = \frac{y_2^5y_4^4y_6^3y_8^2y_{10}}{y_3y_5^2y_7^3y_9^4y_{11}^5}.
\]
Note that the definition of $\pi_1$ is suggested by that of $\pi$ in \S3 (case $A^{(1)}_5$) of \cite{BGM} and that of $\sigma_2$ in \S5.2 of \cite{MOT1}.
Also note that the definition of $\gamma$ is suggested by the kernel of the skew-symmetric matrix $\Lambda$ corresponding to $Q_{11}$ (see \cite{BGM,MOT1}).
Although the variable $\gamma$ can not be regarded as the simple root, we call $\gamma$ a simple root for the sake of convenience.
Then $\gamma$ is invariant under the action of the simple reflections and the Dynkin diagram automorphisms act on the simple roots and the null root as
\begin{align*}
	&\pi_1(\alpha_i) = \alpha_{i+1}\quad (i=0,\ldots,3),\quad
	\pi_1(\alpha_4) = \alpha_0, \\
	&\pi_1(\beta_0) = \beta_1,\quad
	\pi_1(\beta_1) = \beta_0,\quad
	\pi_1(\gamma) = q\gamma,\quad
	\pi_1(q) = q, \\
	&\pi_2(\alpha_0) = \frac{1}{\alpha_0},\quad
	\pi_2(\alpha_i) = \frac{1}{\alpha_{5-i}}\quad (i=1,\ldots,4), \\
	&\pi_2(\beta_0) = \frac{1}{\beta_0},\quad
	\pi_2(\beta_1) = \frac{1}{\beta_1},\quad
	\pi_2(\gamma) = \gamma,\quad
	\pi_2(q) = \frac1q.
\end{align*}
We omit their actions on the coefficients here.

\begin{thm}
Through the confluence $12\to1$, the simple reflections and the Dynkin diagram automorphisms are reduced as follows.
\[\begin{array}{|c||c|c|c|c|c|c|c|c|c|}\hline
	Q_{12} & r_0r_5r_0 & r_1 & r_2 & r_3 & r_4 & s_0 & s_1 & \pi_2s'_0 & \pi_3 \\\hline
	Q_{11} & r_0 & r_1 & r_2 & r_3 & r_4 & s_0 & s_1 & \pi_1^5 & \pi_2 \\\hline
\end{array}\]
The simple roots are also reduced as follows.
\[\begin{array}{|c||c|c|c|c|c|c|c|c|}\hline
	Q_{12} & \alpha_0\alpha_5 & \alpha_1 & \alpha_2 & \alpha_3 & \alpha_4 & \beta_0 & \beta_1 & \alpha_1^{-1}\alpha_2^{-2}\alpha_3^{-3}\alpha_4^{-4}\alpha_5^{-5}(\beta'_1)^5 \\\hline
	Q_{11} & \alpha_0 & \alpha_1 & \alpha_2 & \alpha_3 & \alpha_4 & \beta_0 & \beta_1 & \gamma \\\hline
\end{array}\]
\end{thm}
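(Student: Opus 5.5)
The plan is to make precise what "reduced through the confluence $12\to1$" means and then check each column of the two tables directly. Concretely, substitute $y_{12}\to\varepsilon^{-1}y_1$ and $y_1\to\varepsilon$, keep the other $y_i$, and let $\varepsilon\to0$. A transformation $w$ on $Q_{12}$ reduces to $\bar w$ on $Q_{11}$ if, for every $i\neq12$, the image $w(y_i)$ under this substitution has a finite limit equal to $\bar w(y_i)$, with $y_1$ read as the product $y_1y_{12}$ of the old variables. Equivalently, $w$ must commute with the confluence map, sending the quiver $Q_{12}$ to $Q_{11}$ at the level of $\Lambda$ and of the coefficients.

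For the quiver side I would proceed as follows.
\begin{enumerate}
\item Record the matrix-level identity: adding row and column $12$ to row and column $1$ gives $\Lambda(Q_{11})$.
\item For each mutation sequence, note that $\mu_k$ with $k\neq 1,12$ commutes with the row and column operation whenever vertex $k$ does not see a multiple-arrow cancellation. I would check this only for the vertices actually used.
\end{enumerate}

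The core of the argument is the coefficient computation, done generator by generator using the explicit formulas of Section~\ref{Sec:Affine_Weyl}.

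For $r_1,\dots,r_4,s_0,s_1$, the formulas involve $y_1$ and $y_{12}$ only through expressions like $1+y_{12}$, $1+y_1$, and products in $Y_i$, $Y'_i$. After the substitution, the dominant terms are $\varepsilon^{-1}y_1$ (from $y_{12}$) and $1$ (from $1+y_1\to1$). Taking leading orders should reproduce the Section~\ref{Sec:Affine_Weyl}-type formulas for $Q_{11}$ with the index shifts dictated by the new cycles. For $s_0$, for example, the $6$-cycle $1\to4\to5\to8\to9\to12\to1$ collapses to the $5$-cycle $1\to4\to5\to8\to9\to1$ in $Q_{11}$, and the permutation $(9,12)$ becomes $(8,9)$ after conjugation. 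I would verify this via the $Y'$ leading terms.

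For $r_0r_5r_0$, I would compute its action from the $r_i$ formulas. It fixes $y_j$ outside $\{9,\dots,12,1,\dots,4\}$, and on the vertices $1,2,11$ it should limit to $\mu_1\mu_2(2,11)\mu_2\mu_1$. This matches the fact that $r_0r_5r_0$ is the reflection in $\alpha_0\alpha_5$.

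For $\pi_3\mapsto\pi_2$, the check is immediate. $\pi_3(y_i)=1/y_{13-i}$ exchanges $y_1$ and $y_{12}$, and the substitution $y_{12}\to\varepsilon^{-1}y_1$, $y_1\to\varepsilon$ becomes its own inverse up to relabeling the product $y_1y_{12}$. So the reduction is $\iota(2,11)\cdots(6,7)$ on the remaining vertices.

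For $\pi_2s'_0\mapsto\pi_1^5$, I would compare both sides on generators. I would compute $\pi_1^5$ on $Q_{11}$ from the definition $\pi_1=(1,3,\dots,10)\mu_2$, and compute $\pi_2s'_0$ from the $s'_0$ formula composed with $\pi_2$, then take the limit.

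The root tables follow from the coefficient reductions as monomial identities:
\begin{itemize}
\item $\alpha_0\alpha_5=y_1y_2y_{11}y_{12}\to y_1y_2y_{11}$.
\item $\beta_0=y_1y_4y_5y_8y_9y_{12}\to y_1y_4y_5y_8y_9$.
\item $\alpha_1^{-1}\alpha_2^{-2}\cdots\alpha_5^{-5}(\beta'_1)^5$ expands to $y_2^5y_4^4y_6^3y_8^2y_{10}/(y_3y_5^2y_7^3y_9^4y_{11}^5)$, with $y_{12}$ cancelling. This is exactly $\gamma$.
\end{itemize}
Each of these is a direct exponent count. Since the limit is multiplicative, agreement on roots is also consistent with the earlier reflection identities.

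I expect the main obstacle to be $\pi_2s'_0\to\pi_1^5$. Here $s'_0$ involves the cycle through $11$ and both odd vertices, and its formulas contain the full sums $Y_i$, where several terms can have competing orders in $\varepsilon$. I would first determine the $\varepsilon$-order of each monomial in each $Y_{2i-1}$: products containing $y_{12}$ but not $y_1$ blow up, and those containing $y_1$ alone vanish. I would then verify that the ratios have consistent finite limits. If direct comparison is messy, I would fall back on checking that both sides satisfy the same mutation sequence on the quiver. That is, I would show that the confluence carries the sequence $\mu_1\mu_3\mu_5\mu_7\mu_9(9,11)\mu_9\cdots$ composed with $\pi_2$ to a sequence equal, after reduction of redundant mutations, to $((1,3,\dots,10)\mu_2)^5$.
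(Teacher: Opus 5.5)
Your plan is the paper's proof: substitute $y_{12}\to\varepsilon^{-1}y_1$, $y_1\to\varepsilon$ into the explicit coefficient actions, treating the new $y_1$ as the old product $y_1y_{12}$, and let $\varepsilon\to0$ generator by generator; the root table is then read off as monomial identities, with $y_{12}$ cancelling in $\alpha_1^{-1}\alpha_2^{-2}\alpha_3^{-3}\alpha_4^{-4}\alpha_5^{-5}(\beta'_1)^5$. The paper likewise writes out only $r_0r_5r_0\to r_0$, noting at the mutation level that $\mu_2\mu_1\mu_{12}(11,12)\mu_{12}\mu_1\mu_2$ loses $\mu_{12}$ and becomes $\mu_2\mu_1(11,1)\mu_1\mu_2$, and omits the remaining cases, including $\pi_2s'_0\to\pi_1^5$, just as you defer them.
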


Note that we have not obtained the Dynkin diagram automorphism $\pi_1$ through the confluence $12\to1$ yet.

\begin{proof}
We prove the first half only for the confluence $r_0r_5r_0\to r_0$.
In the quiver $Q_{12}$, the composition $r_0r_5r_0$ acts on the coefficients as
\begin{align*}
	&r_0r_5r_0(y_1y_{12}) = \frac{Y_{1,12,2}Y_{12,2,11}}{y_2y_{12}Y_{2,11,1}Y_{11,1,12}},\quad
	r_0r_5r_0(y_2) = \frac{Y_{2,11,1}}{y_{11}Y_{1,12,2}}, \\
	&r_0r_5r_0(y_3) = \frac{y_1y_3y_{11}Y_{12,2,11}}{Y_{11,1,12}},\quad
	r_0r_5r_0(y_4) = \frac{y_2y_4y_{12}Y_{11,1,12}}{Y_{12,2,11}},\quad
	r_0r_5r_0(y_i) = y_i\quad (i=5,\ldots,8), \\
	&r_0r_5r_0(y_9) = \frac{y_1y_9y_{12}Y_{2,11,1}}{Y_{1,12,2}},\quad
	r_0r_5r_0(y_{10}) = \frac{y_2y_{10}y_{11}Y_{1,12,2}}{Y_{2,11,1}},\quad
	r_0r_5r_0(y_{11}) = \frac{Y_{11,1,12}}{y_1Y_{12,2,11}},
\end{align*}
where $Y_{i,j,k}=1+y_i+y_iy_j+y_iy_jy_k$.
Replacing $y_{12}\to\varepsilon^{-1}y_1,\ y_1\to\varepsilon$, taking a limit $\varepsilon\to0$ and replacing $r_0r_5r_0\to r_0$, we obtain
\begin{align*}
	&r_0(y_1) = \frac{Y_{1,2}}{y_2Y_{11,1}},\quad
	r_0(y_2) = \frac{Y_{2,11}}{y_{11}Y_{1,2}},\quad
	r_0(y_3) = \frac{y_1y_3y_{11}Y_{2,11}}{Y_{11,1}},\quad
	r_0(y_4) = \frac{y_2y_4Y_{11,1}}{Y_{2,11}}, \\
	&r_0(y_i) = y_i\quad (i=5,\ldots,8),\quad
	r_0(y_9) = \frac{y_1y_9Y_{2,11}}{Y_{1,2}},\quad
	r_0(y_{10}) = \frac{y_2y_{10}y_{11}Y_{1,2}}{Y_{2,11}},\quad
	r_0(y_{11}) = \frac{Y_{11,1}}{y_1Y_{2,11}},
\end{align*}
where $Y_{i,j}=1+y_i+y_iy_j$, which is the same as the action of $r_0$ in $Q_{11}$.
Note that this confluence is interpreted at the level of the mutations and the permutations as
\begin{align*}
	r_0r_5r_0 &= \mu_2(1,2)\mu_2\mu_{12}(11,12)\mu_{12}\mu_2(1,2)\mu_2 \\
	&= \mu_2\mu_1\mu_{12}(11,12)\mu_{12}\mu_1\mu_2 \\
	&\xrightarrow{12\to1} \mu_2\mu_1(11,1)\mu_1\mu_2 \\
	&= r_0.
\end{align*}
Also note that $\mu_1(1,2)\mu_1=\mu_2(1,2)\mu_2$ and $\mu_1\mu_2(2,11)\mu_2\mu_1=\mu_2\mu_1(11,1)\mu_1\mu_2$ hold.
We can prove for the other confluences in a similar manner, whose details we omit here.

The latter half is shown as
\begin{align*}
	&\left(\alpha_0\alpha_5,\alpha_1,\ldots,\alpha_4,\beta_0,\beta_1,\alpha_1^{-1}\alpha_2^{-2}\alpha_3^{-3}\alpha_4^{-4}\alpha_5^{-5}(\beta'_1)^5\right) \\
	&= \left(y_1y_2y_{11}y_{12},y_3y_4,\ldots,y_9y_{10},y_1y_4y_5y_8y_9y_{12},y_2y_3y_6y_7y_{10}y_{11},\frac{y_2^5y_4^4y_6^3y_8^2y_{10}}{y_3y_5^2y_7^3y_9^4y_{11}^5}\right) \\
	&\xrightarrow{12\to1} \left(y_1y_2y_{11},y_3y_4,\ldots,y_9y_{10},y_1y_4y_5y_8y_9,y_2y_3y_6y_7y_{10}y_{11},\frac{y_2^5y_4^4y_6^3y_8^2y_{10}}{y_3y_5^2y_7^3y_9^4y_{11}^5}\right) \\
	&= \left(\alpha_0,\alpha_1,\ldots,\alpha_4,\beta_0,\beta_1,\gamma\right).
\end{align*}
\end{proof}

The translation subgroup of $\langle r_0,\ldots,r_4,s_0,s_1,\pi_1,\pi_2\rangle$ is generated by
\begin{align*}
	&T_i = r_ir_{i+1}r_{i+2}r_{i+3}r_{i+4}r_{i+3}r_{i+2}r_{i+1}\quad (i=0,\ldots,4), \\
	&U_0 = s_0s_1,\quad
	U_1 = s_1s_0,\quad
	V = \pi_1r_4r_3r_2r_1s_1,\quad
	V' = \pi_1^5s_1,
\end{align*}
where the indices of $r_i$ are congruent modulo $5$.
They act on the simple roots as
\begin{align*}
	&T_i(\alpha_j) = q^{-\delta_{j,i-1}+2\delta_{j,i}-\delta_{j,i+1}}\alpha_j,\quad
	T_i(\beta_l) = \beta_l,\quad
	T_i(\gamma) = \gamma, \\
	&U_k(\alpha_j) = \alpha_j,\quad
	U_k(\beta_l) = q^{4\delta_{l,k}-2}\beta_l,\quad
	U_k(\gamma) = \gamma, \\
	&V(\alpha_j) = q^{\delta_{j,0}-\delta_{j,1}}\alpha_j,\quad
	V(\beta_l) = q^{\delta_{l,0}-\delta_{l,1}}\beta_l,\quad
	V(\gamma) = q\gamma, \\
	&V'(\alpha_j) = \alpha_j,\quad
	V'(\beta_l) = q^{\delta_{l,0}-\delta_{l,1}}\beta_l,\quad
	V'(\gamma) = q^5\gamma
\end{align*}
for $i,j=0,\ldots,4$ and $k,l=0,1$, where $\delta_{i,j}$ is the Kronecker delta whose indices are congruent modulo $5$.

\begin{cor}
Through the confluence $12\to1$, the translations are reduced as follows.
\[\begin{array}{|c||c|c|c|c|c|c|c|c|c|}\hline
	Q_{12} & T_5T_0 & T_1 & T_2 & T_3 & T_4 & U_0 & U_1 & V & (V')^{-1}V \\\hline
	Q_{11} & T_0 & T_1 & T_2 & T_3 & T_4 & U_0 & U_1 & V & V' \\\hline
\end{array}\]
\end{cor}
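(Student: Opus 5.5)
The plan is to reduce the corollary to the preceding theorem. The confluence $12\to1$ is a substitution followed by a limit, and it is compatible with composition: if elements $w,w'$ of the $Q_{12}$-group reduce to $\bar w,\bar w'$, then $ww'$ reduces to $\bar w\bar w'$. The reason is that the reduced maps act on the variables $y_1,\ldots,y_{11}$ of $Q_{11}$, and the limit $\varepsilon\to0$ can be taken after composing.

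\emph{Caveat.} This compatibility is not automatic, since composition could in principle produce $0\cdot\infty$ cancellations. I would justify it in one of two ways. The first is at the level of mutations and permutations, as in the displayed computation for $r_0r_5r_0$ in the proof of the theorem. The second is to observe that each reduced generator maps the $\varepsilon$-rescaled variables to $\varepsilon$-rescaled variables with finite limits. This compatibility is the step I expect to be the main obstacle. If a direct argument is awkward, I would check the needed identities directly on the coefficients.

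Granting compatibility, each entry of the corollary is obtained by writing the $Q_{12}$ translation as a word in the reducible elements of the theorem's table.
\begin{itemize}
\item $U_0=s_0s_1$ and $U_1=s_1s_0$ reduce immediately.
\item For $T_1,\ldots,T_4$, I would rewrite $T_i$ for $Q_{12}$ using the braid relations, grouping every occurrence of $r_5$ as a block $r_0r_5r_0$ together with the $r_1,\ldots,r_4$. The reduced word is then exactly $r_ir_{i+1}\cdots r_{i+4}\cdots r_{i+1}$ with indices mod $5$, where $r_0$ stands for the reduced $r_0r_5r_0$.
\item $T_5T_0$ is handled the same way. It should reduce to the affine $A^{(1)}_4$ translation $T_0$ attached to the merged root $\alpha_0\alpha_5$.
\end{itemize}
As a consistency check at the level of roots, the action on $\alpha_0\alpha_5,\alpha_1,\ldots,\beta_1,\alpha_1^{-1}\cdots(\beta_1')^5$ computed from the $Q_{12}$ formulas must match the listed $Q_{11}$ formulas under the root correspondence of the theorem. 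For example, $T_5T_0(\alpha_0\alpha_5)=q^2\alpha_0\alpha_5$ and $T_5T_0(\alpha_1)=q^{-1}\alpha_1$.

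For $V$ and $(V')^{-1}V$, I would start from $(V')^{-1}V=(\pi_2\pi_1r_5\cdots r_1s_1')^{-1}\pi_1r_5\cdots r_1s_1$. I would then use the commutation of the three Weyl groups and the semidirect-product conjugation rules for the $\pi$'s to bring it into the form $\pi_2s_0'\cdot w$ with $w\in\langle r_i,s_j\rangle$. The theorem sends $\pi_2s_0'$ to $\pi_1^5$, so this should yield $\pi_1^5s_1=V'$, in analogy with $\tau_c=\pi_2s_0's_0$.

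For $V$ itself there is a genuine difficulty: the theorem notes that $\pi_1$ of $Q_{12}$ has no reduction. I would therefore express $V$ as a combination of $(V')^{-1}V$, the $T_i$ and the $U_k$, using that the translation lattice is determined by the action on the simple roots. Comparing the exponents of $q$ in $V$ with those of the $Q_{11}$ translation $V$ should fix this combination, and the previous cases then give the result. Failing that, I would verify $V$ directly on the coefficients by the same substitution $y_{12}\to\varepsilon^{-1}y_1$, $y_1\to\varepsilon$ and limit, as in the proof of the theorem.
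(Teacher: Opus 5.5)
Your treatment of $U_0,U_1$, $T_1,\dots,T_4$, $T_5T_0$ and $(V')^{-1}V$ is sound, and it matches how the paper means the corollary to follow from the theorem (the paper states it without separate proof). After cancellation, $T_5T_0=r_5r_0r_1r_2r_3r_5r_4r_3r_2r_1=(r_0r_5r_0)\,r_1r_2r_3r_4r_3r_2r_1$. Using $\pi_1\pi_2=\pi_2\pi_1$, $\pi_2r_i=r_i\pi_2$ and $s'_1\pi_2=\pi_2s'_0$, one gets exactly $(V')^{-1}V=\pi_2s'_0s_1\mapsto\pi_1^5s_1=V'$.

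\textbf{The gap is the entry $V$.} Your main plan for it cannot work, even at the level of roots. Put $\Gamma=\alpha_1^{-1}\alpha_2^{-2}\alpha_3^{-3}\alpha_4^{-4}\alpha_5^{-5}(\beta'_1)^5$ on the $Q_{12}$ side. Then $T_1,\dots,T_4,T_5T_0,U_0,U_1$ all fix $\Gamma$ and $(V')^{-1}V(\Gamma)=q^5\Gamma$, but $V(\Gamma)=q\Gamma$. The same obstruction holds in $Q_{11}$: $T_i,U_k$ fix $\gamma$ and $V'(\gamma)=q^5\gamma$, while $V(\gamma)=q\gamma$. So $V$ lies outside the group generated by the reducible translations, and no comparison of $q$-exponents can produce it. Separately, ``a translation is determined by its action on the roots'' would itself need proof here; compare the remark about $\pi_1^4$ for $Q_{103}$.

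The real obstruction is that $V=\pi_1r_4r_3r_2r_1s_1$ in $Q_{11}$ contains $\pi_1$ of $Q_{11}$. That is what the note after the theorem refers to, not $\pi_1$ of $Q_{12}$. Your fallback of brute-forcing $V$ on the coefficients is valid in principle but is where all the work would lie. The missing idea is to factor $V=(\pi_1r_5)\,r_4r_3r_2r_1s_1$ in $Q_{12}$ and reduce $\pi_1r_5$ directly, which is short. One has $\pi_1r_5(y_k)=y_{k+2}$ for $3\le k\le 8$, and
\[
\pi_1r_5:\ y_1\mapsto\frac{y_1y_3(1+y_2)}{1+y_1},\quad y_2\mapsto\frac{y_2y_4(1+y_1)}{1+y_2},\quad y_9\mapsto\frac{y_2y_{11}(1+y_1)}{1+y_2},\quad y_{10}\mapsto\frac{y_1y_{12}(1+y_2)}{1+y_1},\quad y_{11}\mapsto\frac{1}{y_2},\quad y_{12}\mapsto\frac{1}{y_1}.
\]
After $y_{12}\to\varepsilon^{-1}y_1$, $y_1\to\varepsilon$, the images of $y_1y_{12},y_2,y_3,\dots,y_8,y_9,y_{10},y_{11}$ tend to
\[
y_3(1+y_2),\ \frac{y_2y_4}{1+y_2},\ y_5,\dots,y_{10},\ \frac{y_2y_{11}}{1+y_2},\ y_1(1+y_2),\ \frac{1}{y_2}.
\]
This is exactly $\pi_1=(1,3,5,7,9,11,2,4,6,8,10)\mu_2$ of $Q_{11}$. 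Also $\pi_1r_5(y_1)=O(\varepsilon)$ and $\pi_1r_5(y_{12})=\varepsilon^{-1}$, so the rescaling is preserved and your composition principle applies. Combined with the reductions of $r_1,\dots,r_4,s_1$ from the theorem, this gives $V\mapsto V$. It incidentally also supplies the reduction of $\pi_1$ that the paper says it has not obtained.
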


Note that the translation $\tau_c$ in $Q_{12}$ is reduced to that $\tau_c=V'U_1=\pi_1^5s_0$ in $Q_{11}$ through the confluence $12\to1$.

%%%%%%%%%%%%%%%%%%%%%%%%%%%%%%%%%%%%%%%%%%%%%%%%%%%%%%%%%%%%%%%%
% Section 6
%%%%%%%%%%%%%%%%%%%%%%%%%%%%%%%%%%%%%%%%%%%%%%%%%%%%%%%%%%%%%%%%
\section{Degeneration of the $q$-Garnier system: from 11 vertices to 10 vertices}\label{Sec:Degeneration_11_10}

Since there exist $23$ arrows in the quiver $Q_{11}$, we can consider $23$ confluences from $Q_{11}$.
Then the obtained quivers are reduced to five quivers in Figure \ref{Fig:deg_q-Garnier_101}, \ref{Fig:deg_q-Garnier_102}, \ref{Fig:deg_q-Garnier_103}, \ref{Fig:deg_q-Garnier_104} and \ref{Fig:deg_q-Garnier_105} via mutations and permutations.
Since those quivers are obtained through the confluences
\[
	4 \to 5,\quad
	6 \to 4,\quad
	5 \to 8,\quad
	11 \to 2,\quad
	1 \to 11,
\]
we investigate them in this section.
Note that we have not clarified whether those five quivers are truly different yet.

%%%%%%%%%%%%%%%%%%%%%%%%%%%%%%%%%%%%%%%%%%%%%%%%%%%%%%%%%%%%%%%%
\subsection{Confluence $4\to5$}\label{Sec:Degeneration_11_101}

\begin{figure}[h]
	\begin{center}
	%\fbox{
	\begin{picture}(135,100)
		\put(60,72){\small$1$}\put(90,90){\small$2$}
		\put(125,70){\small$3$}\put(90,50){\small$5$}
		\put(125,28){\small$6$}\put(28,90){\small$4$}
		\put(60,0){\small$7$}\put(60,28){\small$8$}
		\put(28,50){\small$9$}\put(3,50){\small$10$}
		\put(30.5,61){\vector(0,1){25}}%9-4
		\put(24,87){\vector(-1,-2){13}}%4-10
		\put(18,58){\vector(2,1){36}}%10-1
		\put(55,32){\vector(-2,1){36}}%10-8
		\put(85,93.5){\vector(-1,0){45}}%2-4
		\put(70,80){\vector(2,1){15}}%1-2
		\put(86,61){\vector(-2,1){17}}%1-5
		\put(55,70){\vector(-3,-2){18}}%1-9
		\put(120,79){\vector(-2,1){18}}%2-3
		\put(120,68){\vector(-2,-1){19}}%3-5
		\put(100,48){\vector(2,-1){20}}%6-5
		\put(40,88){\vector(2,-1){15}}%4-1
		\put(92.5,86){\vector(0,-1){25}}%2-5
		\put(127.5,39){\vector(0,1){26}}%6-3
		\put(70,75){\vector(1,0){51}}%1-3
		\put(120,31){\vector(-1,0){51}}%6-8
		\put(71,3){\vector(2,1){49}}%6-7
		\put(12,45){\vector(1,-1){41}}%10-7
		\put(56,9){\vector(-2,3){25}}%9-7
		\put(92,45){\vector(-2,-3){24}}%7-5
		\put(38,49){\vector(3,-2){18}}%9-8
		\put(69,37){\vector(3,2){17}}%8-5
	\end{picture}
	%}
	\caption{Quiver $Q_{101}$}\label{Fig:deg_q-Garnier_101}
	\end{center}
\end{figure}

We change the vertex $11$ to $4$ after the confluence.
In a similar manner to Section \ref{Sec:Affine_Weyl}, we obtain simple reflections
\begin{align*}
	&r_0 = \mu_1\mu_2(2,4)\mu_2\mu_1,\quad
	r_1 = \mu_3\mu_5(5,6)\mu_5\mu_3,\quad
	r_2 = \mu_7(7,8)\mu_7,\quad
	r_3 = \mu_9(9,10)\mu_9, \\
	&s_0 = \mu_1\mu_5\mu_8(8,9)\mu_8\mu_5\mu_1,\quad
	s_1 = \mu_2\mu_3\mu_6\mu_7\mu_{10}(10,4)\mu_{10}\mu_7\mu_6\mu_3\mu_2,
\end{align*}
simple roots
\[
	\alpha_0 = y_1y_2y_4,\quad
	\alpha_1 = y_3y_5y_6,\quad
	\alpha_2 = y_7y_8,\quad
	\alpha_3 = y_9y_{10},\quad
	\beta_0 = y_1y_5y_8y_9,\quad
	\beta_1 = y_2y_3y_4y_6y_7y_{10}
\]
and a null root
\[
	q = \prod_{i=0}^{3}\alpha_i = \beta_0\beta_1 = \prod_{i=1}^{10}y_i
\]
from the quiver $Q_{101}$.
We omit the actions of the simple reflections on the coefficients, the simple roots and the null root here because they are similar to those given in Section \ref{Sec:Affine_Weyl}.

\begin{fact}[\cite{MOT1}]
The groups $\langle r_0,\ldots,r_3\rangle$ and $\langle s_0,s_1\rangle$ are isomorphic to the affine Weyl groups of type $A^{(1)}_3$ and $A^{(1)}_1$ respectively.
Moreover, those two groups are mutually commutative.
\end{fact}

In addition, we introduce Dynkin diagram automorphisms
\begin{align*}
	&\pi_1 = (1,3,6,8,10)(2,5,7,9,4)\mu_2\mu_6, \\
	&\pi_2 = (1,7)(2,8)(3,9)(4,6)(5,10)\mu_4\mu_6, \\
	&\pi_3 = \iota(2,4)(3,10)(5,9)(6,8)\mu_8\mu_6
\end{align*}
and a variable
\[
	\gamma = \frac{y_2^2y_5y_6^3y_8^2y_{10}}{y_3y_4^2y_9}.
\]
Similarly to Section \ref{Sec:Degeneration_12_11}, we call $\gamma$ a simple root for the sake of convenience.
Then $\gamma$ is invariant under the action of the simple reflections and the Dynkin diagram automorphisms act on the simple roots and the null root as
\begin{align*}
	&\pi_1(\alpha_i) = \alpha_{i+1}\quad (i=0,\ldots,2),\quad
	\pi_1(\alpha_3) = \alpha_0, \\
	&\pi_1(\beta_0) = \beta_1,\quad
	\pi_1(\beta_1) = \beta_0,\quad
	\pi_1(\gamma) = q\gamma,\quad
	\pi_1(q) = q, \\
	&\pi_2(\alpha_0) = \alpha_2,\quad
	\pi_2(\alpha_1) = \alpha_3,\quad
	\pi_2(\alpha_2) = \alpha_0,\quad
	\pi_2(\alpha_3) = \alpha_1, \\
	&\pi_2(\beta_0) = \beta_1,\quad
	\pi_2(\beta_1) = \beta_0,\quad
	\pi_2(\gamma) = \gamma,\quad
	\pi_2(q) = q, \\
	&\pi_3(\alpha_0) = \frac{1}{\alpha_0},\quad
	\pi_3(\alpha_i) = \frac{1}{\alpha_{4-i}}\quad (i=1,2,3), \\
	&\pi_3(\beta_0) = \frac{1}{\beta_0},\quad
	\pi_3(\beta_1) = \frac{1}{\beta_1},\quad
	\pi_3(\gamma) = \gamma,\quad
	\pi_3(q) = \frac1q.
\end{align*}
We omit their actions on the coefficients here.

\begin{thm}
Through the confluence $4\to5$, the simple reflections and the Dynkin diagram automorphisms are reduced as follows.
\[\begin{array}{|c||c|c|c|c|c|c|c|c|c|}\hline
	Q_{11} & r_0 & r_1r_2r_1 & r_3 & r_4 & s_0 & s_1 & r_2\pi_1 & r_1r_0\pi_1^3 & r_2r_3\pi_2 \\\hline
	Q_{101} & r_0 & r_1 & r_2 & r_3 & s_0 & s_1 & \pi_1 & \pi_2 & \pi_3 \\\hline
\end{array}\]
The simple roots are also reduced as follows.
\[\begin{array}{|c||c|c|c|c|c|c|c|}\hline
	Q_{11} & \alpha_0 & \alpha_1\alpha_2 & \alpha_3 & \alpha_4 & \beta_0 & \beta_1 & \alpha_1^{-\frac35}\alpha_2^{\frac95}\alpha_3^{\frac65}\alpha_4^{\frac35}\gamma^{\frac25} \\\hline
	Q_{101} & \alpha_0 & \alpha_1 & \alpha_2 & \alpha_3 & \beta_0 & \beta_1 & \gamma \\\hline
\end{array}\]
\end{thm}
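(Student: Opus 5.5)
The plan follows the proof of the theorem for the confluence $12\to1$: split the statement into the part about transformations and the part about simple roots, and treat the latter first since it is purely monomial. In $Q_{11}$ every root is a monomial in the $y_i$. After renaming vertex $11$ to $4$, the confluence $4\to5$ (old labels) replaces $y_4\to\varepsilon^{-1}y_5$ and $y_5\to\varepsilon$, so the product $y_4y_5$ of $Q_{11}$ becomes $y_5$ of $Q_{101}$. Hence any monomial in which $y_4$ and $y_5$ occur with equal exponents passes to the limit without any $\varepsilon$ surviving.

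First check that each entry in the upper row of the root table is such a monomial, and identify its image.
\begin{itemize}
\item $\alpha_1\alpha_2=y_3y_4y_5y_6$ becomes $y_3y_5y_6$.
\item $\beta_0=y_1y_4y_5y_8y_9$ becomes $y_1y_5y_8y_9$.
\item For the last column, substitute $\alpha_i=y_{2i+1}y_{2i+2}$ and the expression for $\gamma$ of $Q_{11}$. The fractional exponents $\pm\frac35,\frac95,\ldots$ should cancel to integers, and the exponents of $y_4,y_5$ should become equal. After relabeling, the result should be $\gamma=y_2^2y_5y_6^3y_8^2y_{10}/(y_3y_4^2y_9)$ of $Q_{101}$.
\end{itemize}
This is a direct exponent count. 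A useful consistency check is that $\gamma$ lies in the kernel of $\Lambda$ for $Q_{101}$.

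For the transformations, I would compose mutations and permutations at the symbolic level, as in the displayed identity for $r_0r_5r_0$ in the previous proof. For example, $r_1r_2r_1=\mu_3(3,4)\mu_3\,\mu_5(5,6)\mu_5\,\mu_3(3,4)\mu_3$. Using $\mu_3(3,4)\mu_3=\mu_4(3,4)\mu_4$ and the commutation of mutations at non-adjacent vertices, rewrite this as $\mu_3\mu_4\mu_5(5,6)\mu_5\mu_4\mu_3$. The confluence $4\to5$ then collapses $\mu_4\mu_5$ to $\mu_5$, giving $\mu_3\mu_5(5,6)\mu_5\mu_3=r_1$ in $Q_{101}$.

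To make this rigorous, I would compute the action of each composite on the coefficients of $Q_{11}$ and substitute $y_4\to\varepsilon^{-1}y_5$, $y_5\to\varepsilon$. The confluence then turns factors like $Y_{3,4,5,\dots}$ into $Y_{3,5,\dots}$, and it remains to verify that the limits $\varepsilon\to0$ exist and coincide with the actions of $r_1$ and $s_0$ in $Q_{101}$. Two points need attention:
\begin{itemize}
\item The old $y_4$ and $y_5$ must only enter through the combination that survives, i.e.\ the image of $y_4y_5$ must be consistent.
\item The remaining entries $r_0,r_3,r_4,s_1$ should be handled the same way; $s_1$ in particular involves the renaming $(10,11)\to(10,4)$.
\end{itemize}

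The main obstacle is the three Dynkin automorphisms $r_2\pi_1$, $r_1r_0\pi_1^3$ and $r_2r_3\pi_2$. Here $\pi_1$ of $Q_{11}$ contains the mutation $\mu_2$ and an $11$-cycle, and the reduced $\pi_i$ of $Q_{101}$ contain mutations such as $\mu_2\mu_6$. The strategy is to write each composite explicitly as a permutation times a word in mutations. Then use the relations $\mu_i(i,j)\mu_i=\mu_j(i,j)\mu_j$ together with $\mu_i^2=1$ to move the permutation to the left and cancel mutations, until the word contains only $\mu$'s at vertices compatible with the confluence and a permutation that maps $\{4,5\}$ onto itself as a pair. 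Applying $4\to5$ should then give the stated $\pi_i$.

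If the symbolic manipulation becomes unwieldy, I will fall back on computer algebra: compute the rational action of each composite on all eleven $y_i$, take the $\varepsilon$-limit, and compare with the action of $\pi_i$ on $Q_{101}$. As a final sanity check, confirm that the induced actions on the simple roots match the formulas listed for $\pi_1,\pi_2,\pi_3$, including $\pi_1(\gamma)=q\gamma$. This is consistent with the corresponding identity $\pi_1(\gamma)=q\gamma$ in $Q_{11}$ and with the exponent $\frac25$ appearing in the root table.
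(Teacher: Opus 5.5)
Your proposal is correct and follows the paper's route, which is the same as for $12\to1$. The roots reduce by the monomial substitution $y_4\to\varepsilon^{-1}y_5$, $y_5\to\varepsilon$ followed by renaming $11\to4$: your exponent count gives $y_2^2y_4y_5y_6^3y_8^2y_{10}/(y_3y_9y_{11}^2)$, hence $\gamma$ of $Q_{101}$. Each transformation is then verified by computing its action on the coefficients and letting $\varepsilon\to0$, with word identities such as $r_1r_2r_1=\mu_3\mu_4\mu_5(5,6)\mu_5\mu_4\mu_3\to\mu_3\mu_5(5,6)\mu_5\mu_3$ serving only as a guide; that identity comes from conjugating by $(3,4)$, as in the paper's $r_0r_5r_0$ computation, not from commuting $\mu_3,\mu_5$, which are adjacent in $Q_{11}$.

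You should drop the heuristic requirement that the permutation part map $\{4,5\}$ to itself. Both your own $(5,6)$ and the paper's $(11,12)\to(11,1)$ violate it, so for the three Dynkin automorphisms it is the direct computation of the limits that carries the proof.
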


We can prove this theorem in a similar manner to Section \ref{Sec:Degeneration_12_11}, whose detail we omit here.
Note that we replace the coefficient $y_{11}$ with $y_4$ after the confluence procedure.

The translation subgroup of $\langle r_0,\ldots,r_3,s_0,s_1,\pi_1,\pi_2,\pi_3\rangle$ is generated by
\begin{align*}
	&T_i = r_ir_{i+1}r_{i+2}r_{i+3}r_{i+2}r_{i+1}\quad (i=0,\ldots,3), \\
	&U_0 = s_0s_1,\quad
	U_1 = s_1s_0,\quad
	V = \pi_1r_3r_2r_1s_1,\quad
	V' = \pi_2\pi_1^2s_1,
\end{align*}
where the indices of $r_i$ are congruent modulo $4$.
They act on the simple roots as
\begin{align*}
	&T_i(\alpha_j) = q^{-\delta_{j,i-1}+2\delta_{j,i}-\delta_{j,i+1}}\alpha_j,\quad
	T_i(\beta_l) = \beta_l,\quad
	T_i(\gamma) = \gamma, \\
	&U_k(\alpha_j) = \alpha_j,\quad
	U_k(\beta_l) = q^{4\delta_{l,k}-2}\beta_l,\quad
	U_k(\gamma) = \gamma, \\
	&V(\alpha_j) = q^{\delta_{j,0}-\delta_{j,1}}\alpha_j,\quad
	V(\beta_l) = q^{\delta_{l,0}-\delta_{l,1}}\beta_l,\quad
	V(\gamma) = q\gamma, \\
	&V'(\alpha_j) = \alpha_j,\quad
	V'(\beta_l) = q^{\delta_{l,0}-\delta_{l,1}}\beta_l,\quad
	V'(\gamma) = q^2\gamma
\end{align*}
for $i,j=0,\ldots,3$ and $k,l=0,1$, where $\delta_{i,j}$ is the Kronecker delta whose indices are congruent modulo $4$.

\begin{cor}
Through the confluence $4\to5$, the translations are reduced as follows.
\[\begin{array}{|c||c|c|c|c|c|c|c|c|}\hline
	Q_{11} & T_0 & T_1T_2 & T_3 & T_4 & U_0 & U_1 & V & V' \\\hline
	Q_{101} & T_0 & T_1 & T_2 & T_3 & U_0 & U_1 & V & V' \\\hline
\end{array}\]
\end{cor}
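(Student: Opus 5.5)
The plan is to reduce the corollary to the preceding theorem, which already says how each simple reflection and Dynkin diagram automorphism of $Q_{11}$ reduces to $Q_{101}$. The translations of $Q_{11}$ listed in the corollary are words in $r_0,\ldots,r_4,s_0,s_1,\pi_1,\pi_2$. The confluence acts as a limit on the coefficients that is compatible with composition of birational maps, so it respects products. Hence it suffices to rewrite each translation of $Q_{11}$ as a word in the elements appearing in the top row of the theorem's table, and then replace each factor by its image.

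The cases go as follows.
\begin{itemize}
\item $T_0$, $T_3$, $T_4$, $U_0$, $U_1$: use the five-term expressions $T_i=r_ir_{i+1}r_{i+2}r_{i+3}r_{i+4}r_{i+3}r_{i+2}r_{i+1}$ (indices mod $5$). Merge each occurrence of $r_1r_2r_1$ or $r_2r_1r_2$ (which coincide by the braid relation) into the reduced generator $r_1$ of $Q_{101}$.
\item $T_1T_2$: rather than reducing the word letter by letter, use that $T_1T_2$ and the $Q_{101}$ translation $T_1$ act by the same $q$-shift on the simple roots: $\alpha_0\mapsto q^{-1}\alpha_0$, $\alpha_1\alpha_2\mapsto q\,\alpha_1\alpha_2$, $\alpha_3\mapsto q^{-1}\alpha_3$, and trivially on the $\beta$'s and on $\gamma$.
\item $U_0=s_0s_1$ and $U_1=s_1s_0$: these reduce immediately.
\item $V=\pi_1r_4r_3r_2r_1s_1$ and $V'=\pi_1^5s_1$: insert $r_2r_2=1$ and $r_1r_1=1$ to write them in terms of $r_2\pi_1$, $r_1r_0\pi_1^3$, $r_1r_2r_1$ and the other generators. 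For example, $\pi_1 r_4r_3r_2r_1 = (r_2\pi_1)(\pi_1^{-1}r_2\pi_1)r_4r_3r_2r_1$, and $\pi_1^{-1}r_2\pi_1=r_1$, which lets the $r_1r_2r_1$ pattern appear.
\end{itemize}

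The extended Weyl group is a semidirect product and the translation lattice is abelian. A translation is therefore determined, modulo the finite diagram automorphism part, by its action on the simple roots. So an alternative route for $V$, $V'$ is as follows. Compute the images of both sides on $\alpha_0,\alpha_1\alpha_2,\alpha_3,\alpha_4,\beta_0,\beta_1,\gamma$ using the tabulated actions and the second table of the theorem. For example, $V'(\gamma_{101})$ with $\gamma_{101}=\alpha_1^{-3/5}\alpha_2^{9/5}\alpha_3^{6/5}\alpha_4^{3/5}\gamma^{2/5}$ gives $(q^5)^{2/5}\gamma_{101}=q^2\gamma_{101}$, which matches $V'(\gamma)=q^2\gamma$ in $Q_{101}$. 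Then check that the reduced element lies in the translation subgroup, which follows from the reductions of the generators.

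The main obstacle is making $V$ and $V'$ explicitly words in the reducible combinations $r_2\pi_1$, $r_1r_0\pi_1^3$, $r_1r_2r_1$, since $\pi_1$ and $r_2$ alone have no counterpart. I would first try conjugation identities $\pi_1 r_i\pi_1^{-1}=r_{i+1}$ to move all $\pi_1$'s to the left, grouping them as $(r_2\pi_1)$ and $(r_1r_0\pi_1^3)$ blocks. Failing that, I would verify the reduction by direct coefficient computation of $V$ on $Q_{11}$ under $y_4\to\varepsilon^{-1}y_5$, $y_5\to\varepsilon$, $\varepsilon\to0$, in the same way as in the proof of the theorem in Section \ref{Sec:Degeneration_12_11}.
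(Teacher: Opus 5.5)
Your overall plan is the right one, and it is how the corollary follows from the theorem: write each $Q_{11}$ translation as a word in the reducible elements $r_0$, $r_1r_2r_1$, $r_3$, $r_4$, $s_0$, $s_1$, $r_2\pi_1$, $r_1r_0\pi_1^3$, then reduce letter by letter.

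The gap is that you do not carry this out for $T_1T_2$ and $V'$. The substitute you offer there, matching actions on the simple roots (also your ``alternative route'' for $V,V'$), cannot replace it. Agreement on the roots does not show that the coefficient action of the $Q_{11}$ element converges under $y_4\to\varepsilon^{-1}y_5$, $y_5\to\varepsilon$, $\varepsilon\to0$. Nor does it identify the limit, because the birational action on the $y_i$ is not determined by the root action. The paper exhibits both failures. First, $\pi_1^4$ on $Q_{103}$ fixes every simple root but not the coefficients. Second, $\tau_c$ of $Q_{11}$ diverges under $5\to8$, although it merely rescales the roots by powers of $q$. Compatibility of the confluence with composition only applies to words all of whose letters reduce, so the explicit word cannot be bypassed. (Incidentally, $T_1T_2$ sends $\alpha_1\alpha_2\mapsto q^2\alpha_1\alpha_2$, not $q\,\alpha_1\alpha_2$.)

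The missing words follow from $\pi_1r_i\pi_1^{-1}=r_{i+1}$ (indices mod $5$) and the Coxeter relations:
\begin{align*}
T_1T_2&=r_1r_2r_3r_4r_1r_0r_4r_3=(r_1r_2r_1)\,r_3r_4r_0r_4r_3\ \mapsto\ r_1r_2r_3r_0r_3r_2=T_1,\\
V&=(r_2\pi_1)\,r_4r_3\,(r_1r_2r_1)\,s_1\ \mapsto\ \pi_1r_3r_2r_1s_1=V,\\
V'&=\pi_1^5s_1=(r_1r_0\pi_1^3)(r_2\pi_1)^2s_1\ \mapsto\ \pi_2\pi_1^2s_1=V'.
\end{align*}
For $V$, this is your identity $\pi_1=(r_2\pi_1)r_1$ followed by commuting $r_1$ past $r_4r_3$. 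For $V'$, use $\pi_1^3r_2=r_0\pi_1^3$ and $\pi_1^4r_2=r_1\pi_1^4$.

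Note also that $T_0,T_3,T_4$ do not literally contain $r_1r_2r_1$, so ``merging occurrences'' needs a rewriting step. After commuting $r_1$ past $r_3,r_4$ (and using $r_2r_3r_2=r_3r_2r_3$ for $T_4$), they become $r_0(r_1r_2r_1)r_3r_4r_3(r_1r_2r_1)$, $r_3r_4r_0(r_1r_2r_1)r_0r_4$ and $r_4r_0r_3(r_1r_2r_1)r_3r_0$. These reduce to $T_0$, $T_2$, $T_3$ of $Q_{101}$, the last after a braid relation in $Q_{101}$.
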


Note that the translation $\tau_c$ in $Q_{11}$ is reduced to that $\tau_c=V'U_1=\pi_2\pi_1^2s_0$ in $Q_{101}$ through the confluence $4\to5$.

%%%%%%%%%%%%%%%%%%%%%%%%%%%%%%%%%%%%%%%%%%%%%%%%%%%%%%%%%%%%%%%%
\subsection{Confluence $6\to4$}\label{Sec:Degeneration_11_102}

\begin{figure}[h]
	\begin{center}
	%\fbox{
	\begin{picture}(135,100)
		\put(60,72){\small$1$}\put(90,90){\small$2$}
		\put(125,70){\small$3$}\put(90,50){\small$4$}
		\put(125,28){\small$5$}\put(28,90){\small$6$}
		\put(60,0){\small$7$}\put(60,28){\small$8$}
		\put(28,50){\small$9$}\put(3,50){\small$10$}
		\put(30.5,61){\vector(0,1){25}}%9-4
		\put(24,87){\vector(-1,-2){13}}%4-10
		\put(18,58){\vector(2,1){36}}%10-1
		\put(55,32){\vector(-2,1){36}}%10-8
		\put(85,93.5){\vector(-1,0){45}}%2-4
		\put(70,80){\vector(2,1){15}}%1-2
		\put(86,61){\vector(-2,1){17}}%1-5
		\put(55,70){\vector(-3,-2){18}}%1-9
		\put(120,79){\vector(-2,1){18}}%2-3
		\put(100,59){\vector(2,1){20}}%3-4
		\put(120,38){\vector(-2,1){20}}%4-5
		\put(40,88){\vector(2,-1){15}}%4-1
		\put(92.5,86){\vector(0,-1){25}}%2-5
		\put(127.5,65){\vector(0,-1){26}}%5-3
		\put(70,75){\vector(1,0){51}}%1-3
		\put(70,31){\vector(1,0){51}}%6-8
		\put(120,28){\vector(-2,-1){49}}%5-7
		\put(12,45){\vector(1,-1){41}}%10-7
		\put(56,9){\vector(-2,3){25}}%9-7
		\put(68,9){\vector(2,3){24}}%4-7
		\put(38,49){\vector(3,-2){18}}%9-8
		\put(86,48){\vector(-3,-2){17}}%4-8
	\end{picture}
	%}
	\caption{Quiver $Q_{102}$}\label{Fig:deg_q-Garnier_102}
	\end{center}
\end{figure}

We change the vertex $11$ to $6$ after the confluence.
In a similar manner to Section \ref{Sec:Affine_Weyl}, we obtain simple reflections
\[
	r_0 = \mu_1\mu_2(2,6)\mu_2\mu_1,\quad
	r_1 = \mu_3\mu_4(4,5)\mu_4\mu_3,\quad
	r_2 = \mu_7(7,8)\mu_7,\quad
	r_3 = \mu_9(9,10)\mu_9,
\]
simple roots
\[
	\alpha_0 = y_1y_2y_6,\quad
	\alpha_1 = y_3y_4y_5,\quad
	\alpha_2 = y_7y_8,\quad
	\alpha_3 = y_9y_{10}
\]
and a null root
\[
	q = \prod_{i=0}^{3}\alpha_i = \prod_{i=1}^{10}y_i
\]
from the quiver $Q_{102}$.
We omit the actions of the simple reflections on the coefficients, the simple roots and the null root here because they are similar to those given in Section \ref{Sec:Affine_Weyl}.

\begin{fact}[\cite{MOT1}]
The group $\langle r_0,\ldots,r_3\rangle$ is isomorphic to the affine Weyl group of type $A^{(1)}_3$.
\end{fact}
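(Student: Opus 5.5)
The plan follows the pattern used for the earlier Facts attributed to \cite{MOT1}: reduce every claimed relation to an identity of birational maps on $\mathbb{C}(y_1,\ldots,y_{10})$, and check it on the simple roots and on the coefficients. Since $r_0,\ldots,r_3$ are built from mutations and permutations of $Q_{102}$, each one is a cluster transformation. The MOT construction shows that a composite $\mu_{i_1}\cdots\mu_{i_k}(i_k,j)\mu_{i_k}\cdots\mu_{i_1}$ attached to a cycle $i_1\to\cdots\to i_k\to j\to i_1$ returns the quiver to itself whenever each outside vertex sends to the cycle as many arrows as it receives from it. So the first step is to check, from Figure~\ref{Fig:deg_q-Garnier_102}, that the $3$-cycles $1\to2\to6\to1$ and $3\to4\to5\to3$, together with the $2$-cycles $\{7,8\}$ and $\{9,10\}$, satisfy this balancing condition. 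It then follows that each $r_i$ preserves $Q_{102}$ and is involutive: $r_i^2=1$, because conjugating the involution $(i_k,j)$ by mutations gives an involution.

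Next I compute the action on the simple roots $\alpha_0=y_1y_2y_6$, $\alpha_1=y_3y_4y_5$, $\alpha_2=y_7y_8$, $\alpha_3=y_9y_{10}$. The aim is to show $r_i(\alpha_j)=\alpha_j\alpha_i^{-a_{i,j}}$, where $(a_{i,j})$ is the $A^{(1)}_3$ Cartan matrix. This is monomial bookkeeping. The factors $1+y$ produced by the mutations cancel inside the products of $y$'s over each cycle, as they did for $r_0$ in the $Q_{11}$ computation, so only the exponent pattern given by arrow counts between cycles survives. That pattern encodes the cyclic adjacency $0-1-2-3-0$.

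For the braid relations $(r_ir_j)^3=1$ for adjacent $i,j$ and $(r_ir_j)^2=1$ otherwise, I would use the cluster-algebra principle invoked in \cite{MOT1}: a mutation-permutation sequence that returns the quiver to itself and acts trivially on the tropical (monomial) part acts trivially on the coefficients. By sign-coherence and the separation formula for $y$-variables, it is then enough to check that $(r_ir_j)^{m_{ij}}$ acts as the identity on the linear lattice spanned by the exponent vectors of the $y_k$. This is a finite linear-algebra check, obtained as a tropicalization of the formulas above. Commuting pairs come from cycles with no net interaction, and for those the check is immediate.

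The main obstacle is proving that there are no further relations, i.e.\ that the group is genuinely the affine Weyl group of type $A^{(1)}_3$ and not a proper quotient of it. I would settle this through the action on the simple roots. The homomorphism from the abstract Coxeter group to $\langle r_0,\ldots,r_3\rangle$, composed with the action on the $\alpha_j$ (with $q$ fixed), reproduces the standard faithful reflection representation of $W(A^{(1)}_3)$ on the root lattice modulo the null root, written multiplicatively. Any element in the kernel therefore acts trivially on the $\alpha_j$, hence is trivial in $W(A^{(1)}_3)$. This gives injectivity and hence the isomorphism.
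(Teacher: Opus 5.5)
Your plan follows the route the paper relies on: it cites \cite{MOT1} for the cycle reflections and reads the isomorphism off the standard action on ``the simple roots and the null root''. The balancing checks you propose do hold for $Q_{102}$. The arrows actually run $3\to5\to4\to3$, but orientation is irrelevant here.

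The genuine problem is in the injectivity step, which you single out as the main obstacle. The action of $W(A^{(1)}_3)$ on the root lattice \emph{modulo the null root} is not faithful. Every translation acts trivially on that quotient; compare $T_i(\alpha_j)=q^{-\delta_{j,i-1}+2\delta_{j,i}-\delta_{j,i+1}}\alpha_j$, which becomes the identity once $q$ is set to $1$. So this representation factors through the finite Weyl group $W(A_3)\cong S_4$. Used as you state it, it cannot rule out that $\langle r_0,\ldots,r_3\rangle$ is a proper quotient of $W(A^{(1)}_3)$ in which some translations die.

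The repair is to keep $q$ as a nontrivial element. The roots $\alpha_0=y_1y_2y_6$, $\alpha_1=y_3y_4y_5$, $\alpha_2=y_7y_8$, $\alpha_3=y_9y_{10}$ are monomials in pairwise disjoint sets of independent variables. Hence they generate a free abelian group of rank $4$ in which $q=\alpha_0\alpha_1\alpha_2\alpha_3\neq1$. Written additively, the action $r_i(\alpha_j)=\alpha_j\alpha_i^{-a_{i,j}}$ on this group (not on its quotient by $q$) is exactly the Tits geometric representation of the Coxeter group with $m_{i,j}=3$ for $\{i,j\}=\{0,1\},\{1,2\},\{2,3\},\{3,0\}$ and $m_{0,2}=m_{1,3}=2$, because $-2\cos(\pi/m_{i,j})=a_{i,j}$. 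By Tits' theorem this representation is faithful, even though its bilinear form is degenerate. Hence the composite $W(A^{(1)}_3)\to\langle r_0,\ldots,r_3\rangle\to\mathrm{Aut}$ of the free abelian group is injective, which is what you need.

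A smaller point concerns the Coxeter relations. The principle you use there is the synchronicity theorem: periodicity of $C$-matrices implies periodicity of $y$-seeds. It needs more than sign-coherence and the separation formula, since one must also know that the $F$-polynomials return to $1$. So either cite it as a theorem, or verify the relations directly on the explicit formulas. That is short, because each $r_i$ moves only the $y_k$ on its cycle and on the adjacent vertices.
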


In addition, we introduce Dynkin diagram automorphisms
\begin{align*}
	&\pi_1 = \iota(1,3)(4,6)(5,9)(7,10)\mu_9\mu_8\mu_2\mu_5, \\
	&\pi_2 = \iota(1,7)(2,4)(3,5)(6,8)(9,10)\mu_4\mu_2, \\
	&\pi_3 = (1,7)(2,5)(3,4)(6,8)\mu_5\mu_2
\end{align*}
and variables
\[
	\gamma_1 = \frac{y_2^2y_3y_4y_{10}}{y_5y_9},\quad
	\gamma_2 = \frac{y_1y_2y_4^2y_5^2y_8^3}{y_3^2y_6^3y_7}.
\]
Similarly to Section \ref{Sec:Degeneration_12_11}, we call $\gamma_1,\gamma_2$ simple roots for the sake of convenience.
Then $\gamma_1,\gamma_2$ are invariant under the action of the simple reflections and the Dynkin diagram automorphisms act on the simple roots and the null root as
\begin{align*}
	&\pi_1(\alpha_0) = \frac{1}{\alpha_1},\quad
	\pi_1(\alpha_1) = \frac{1}{\alpha_0},\quad
	\pi_1(\alpha_2) = \frac{1}{\alpha_3},\quad
	\pi_1(\alpha_3) = \frac{1}{\alpha_2}, \\
	&\pi_1(\gamma_1) = q^{-1}\gamma_1,\quad
	\pi_1(\gamma_2) = q\gamma_2,\quad
	\pi_1(q) = \frac1q, \\
	&\pi_2(\alpha_i) = \frac{1}{\alpha_{2-i}}\quad (i=0,1,2),\quad
	\pi_2(\alpha_3) = \frac{1}{\alpha_3}, \\
	&\pi_2(\gamma_1) = \gamma_1,\quad
	\pi_2(\gamma_2) = \gamma_2,\quad
	\pi_2(q) = \frac1q, \\
	&\pi_3(\alpha_i) = \alpha_{2-i}\quad (i=0,1,2),\quad
	\pi_3(\alpha_3) = \alpha_3, \\
	&\pi_3(\gamma_1) = \gamma_1,\quad
	\pi_3(\gamma_2) = \frac{1}{\gamma_2},\quad
	\pi_3(q) = q.
\end{align*}
We omit their actions on the coefficients here.

\begin{thm}
Through the confluence $6\to4$, the simple reflections and the Dynkin diagram automorphisms are reduced as follows.
\[\begin{array}{|c||c|c|c|c|c|c|c|}\hline
	Q_{11} & r_0 & r_1r_2r_1 & r_3 & r_4 & r_2r_3r_4\pi_1\pi_2 & r_2r_3r_4r_0s_1\pi_1^2\pi_2 & r_2\pi_2\pi_1 \\\hline
	Q_{102} & r_0 & r_1 & r_2 & r_3 & \pi_1 & \pi_2 & \pi_3\pi_1\pi_3 \\\hline
\end{array}\]
The simple roots are also reduced as follows.
\[\begin{array}{|c||c|c|c|c|c|c|}\hline
	Q_{11} & \alpha_0 & \alpha_1\alpha_2 & \alpha_3 & \alpha_4 & \alpha_1^{\frac15}\alpha_2^{-\frac35}\alpha_3^{-\frac25}\alpha_4^{-\frac15}\beta_1\gamma^{\frac15} & \alpha_1^{-\frac35}\alpha_2^{\frac95}\alpha_3^{\frac65}\alpha_4^{\frac35}\beta_0\beta_1^{-1}\gamma^{\frac25} \\\hline
	Q_{102} & \alpha_0 & \alpha_1 & \alpha_2 & \alpha_3 & \gamma_1 & \gamma_2 \\\hline
\end{array}\]
\end{thm}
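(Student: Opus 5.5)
The theorem has two halves: a table of transformations and a table of simple roots. I will prove them in the same way as the theorem for the confluence $12\to1$, treating the roots first because that part is a direct monomial computation.

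\textbf{Simple roots.} Write every entry of the $Q_{11}$ row as a Laurent monomial in $y_1,\ldots,y_{11}$, using
\[
\alpha_i=y_{2i+1}y_{2i+2}\ (i=1,\ldots,4),\qquad \alpha_0=y_1y_2y_{11},
\]
together with the given expressions for $\beta_0,\beta_1,\gamma$. Then apply the substitution $y_6\to\varepsilon^{-1}y_4$, $y_4\to\varepsilon$. Rename $y_{11}\to y_6$, and let $\varepsilon\to0$; every genuine monomial invariant should become independent of $\varepsilon$.
\begin{itemize}
\item For $\alpha_1\alpha_2=y_3y_4y_5y_6$ we get $y_3y_4y_5$, which is $\alpha_1$ of $Q_{102}$. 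The entries $\alpha_0$, $\alpha_3$ and $\alpha_4$ go to $\alpha_0$, $\alpha_2$ and $\alpha_3$ in the same way.
\item For the fractional-power entries, first check that the exponents of $y_4$ and $y_6$ coincide, which is exactly the condition for $\varepsilon$ to cancel. After that, the surviving monomial is compared with $\gamma_1=y_2^2y_3y_4y_{10}/(y_5y_9)$ and with $\gamma_2$.
\item The fractional exponents are harmless here. They were chosen so that all $y$-exponents become integers, so the check is just integer bookkeeping.
\end{itemize}
As a consistency check, the resulting monomials should lie in the kernel of the skew-symmetric matrix of $Q_{102}$. This is the characterization that motivated $\gamma_1$ and $\gamma_2$.

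\textbf{Transformations.} I will take each pair separately.
\begin{itemize}
\item For the simple reflections, compute the action on the $y_i$ in $Q_{11}$, substitute, and take the limit. This is done for $r_0$, for $r_1r_2r_1$ (via the cycle $3\to4\to5\to6$), and for $r_3$ and $r_4$.
\item A cleaner route works at the level of mutations, as in the proof for $12\to1$. I will rewrite $r_1r_2r_1$ as $\mu_3\mu_4\mu_5(5,6)\mu_5\mu_4\mu_3$ up to the identity $\mu_i(i,j)\mu_i=\mu_j(i,j)\mu_j$. The confluence $6\to4$ then merges $\mu_6$ and $\mu_4$, giving $\mu_3\mu_4(4,5)\mu_4\mu_3=r_1$ of $Q_{102}$.
\item For the three Dynkin-automorphism entries, first express each $Q_{11}$ word in terms of mutations and permutations. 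This uses the definitions of $r_i$, $s_1$, $\pi_1=(1,3,5,7,9,11,2,4,6,8,10)\mu_2$ and $\pi_2=\iota(2,11)\cdots$.
\item Next, use the commutation rules $\mu_i\sigma=\sigma\mu_{\sigma^{-1}(i)}$, $\mu_i\mu_j=\mu_j\mu_i$ when $\lambda_{ij}=0$, and $\mu_i^2=1$. These should cancel paired mutations and reduce each word to a single permutation, possibly composed with $\iota$, times a short product of mutations.
\item Finally, apply the confluence to that normal form and compare with the definitions of $\pi_1$, $\pi_2$ and $\pi_3\pi_1\pi_3$ in $Q_{102}$.
\end{itemize}
A safer fallback is available for any entry that resists this. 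Evaluate both sides on the coefficients $y_i$ as rational functions, apply the substitution, take $\varepsilon\to0$, and compare with the action computed directly in $Q_{102}$. On the simple roots, the reduction must agree with the second table, since automorphisms act on roots monomially. This gives a quick necessary check: for instance, $r_2r_3r_4\pi_1\pi_2$ must send $\alpha_0\mapsto\alpha_1^{-1}$, and so on.

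\textbf{Main obstacle.} The hard part will be the two long words $r_2r_3r_4r_0s_1\pi_1^2\pi_2$ and $r_2\pi_2\pi_1$. Their mutation sequences are long, and the limit $\varepsilon\to0$ is delicate. Factors such as $1+y_4$ or $1+\varepsilon^{-1}y_4$ must combine so that leading powers of $\varepsilon$ cancel between numerator and denominator. For these I would first verify the root-level action as a consistency check. I would then do the coefficient-level limit only on the vertices adjacent to $4$ and $6$, since the other coefficients are unaffected by the substitution and their images can be read off from the permutation part.
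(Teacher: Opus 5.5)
Your overall route is the paper's. The authors omit this proof as being ``in a similar manner to Section 5'': compute the actions on the coefficients in $Q_{11}$, substitute $y_6\to\varepsilon^{-1}y_4$, $y_4\to\varepsilon$, let $\varepsilon\to0$ and rename $y_{11}\to y_6$, with merging of mutation words used only as an interpretation.

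Your root half goes through as planned. The two fractional entries are the integral monomials $y_2^2y_3y_4y_6y_{10}/(y_5y_9)$ and $y_1y_2y_4^2y_5^2y_6^2y_8^3/(y_3^2y_7y_{11}^3)$. The exponents of $y_4$ and $y_6$ agree (they are $1$ and $2$), and the monomials reduce exactly to $\gamma_1$ and $\gamma_2$.

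Your identity $r_1r_2r_1=\mu_3\mu_4\mu_5(5,6)\mu_5\mu_4\mu_3$ is also correct. The swap to $\mu_6(5,6)\mu_6$ is legitimate because, in the quiver reached after $\mu_3$ and $\mu_4$, the vertices $5$ and $6$ are unjoined and have opposite arrows to $3,4,7,8$. (The $4$-cycle in $Q_{11}$ is $3\to5\to4\to6\to3$, not $3\to4\to5\to6$.)

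The step that would fail is the shortcut in your last paragraph. You propose to check the limit only at vertices adjacent to $4$ and $6$, claiming the other images are unaffected by the substitution and can be read off from the permutation part. That claim is false for these words.

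\begin{itemize}
\item The target $\pi_1=\iota(1,3)(4,6)(5,9)(7,10)\mu_9\mu_8\mu_2\mu_5$ of $Q_{102}$ contains the transposition $(4,6)$. So its image of the coefficient at vertex $6$ is built from the merged $y_4$, together with mutation factors.
\item Vertex $6$ of $Q_{102}$ is $y_{11}$ of $Q_{11}$, which is adjacent to neither $4$ nor $6$ there. Hence $r_2r_3r_4\pi_1\pi_2(y_{11})$ must genuinely depend on $y_4y_6$, and its limit is part of what has to be proved.
\item Long mutation chains also carry $y_6$ to distant vertices. The Section 3 formula for $s_1$ involves only the cycle through $2,3,6,7,10,11$ and the arrows $7\to9\to11$, so it persists in $Q_{11}$. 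It gives $s_1(y_9)=y_7y_9y_{10}Y'_6/Y'_{10}$ with $Y'_6=1+y_6+y_6y_3+\cdots$ and $Y'_{10}=1+y_{10}+y_{10}y_7+y_{10}y_7y_6+\cdots$.
\item Under $y_6=\varepsilon^{-1}y_4$ this tends to $y_9(1+y_3+y_2y_3+y_2y_3y_{11}+y_2y_3y_{10}y_{11})/(1+y_3+y_2y_3)$, which is not a permutation of variables.
\end{itemize}

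So for the three Dynkin-automorphism entries you must compute the images of all $y_k$ with $k\neq4,6$, and of the product $y_4y_6$ (as the paper does with $y_1y_{12}$). Then substitute, take $\varepsilon\to0$, and match all ten images in $Q_{102}$.

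Likewise, ``applying the confluence to a normal form'' of a mutation word is not a defined operation, since the confluence is defined only on quivers and coefficients. The coefficient-level limit is the proof, not a fallback; the word-level merging is bookkeeping that suggests the answer.
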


We can prove this theorem in a similar manner to Section \ref{Sec:Degeneration_12_11}, whose detail we omit here.
Note that we replace the coefficients $y_{11}$ with $y_6$ after the confluence procedure.
Note that we have not obtained the Dynkin diagram automorphism $\pi_3$ through the confluence $6\to4$ yet.

The translation subgroup of $\langle r_0,\ldots,r_3,\pi_1,\pi_2,\pi_3\rangle$ is generated by
\[
	T_i = r_ir_{i+1}r_{i+2}r_{i+3}r_{i+2}r_{i+1}\quad (i=0,\ldots,3),\quad
	U = \pi_2\pi_1r_3r_2r_1,\quad
	V = (\pi_3\pi_2\pi_1)^2,\quad
	V' = (\pi_1\pi_3)^4,
\]
where the indices of $r_i$ are congruent modulo $4$.
They act on the simple roots as
\begin{align*}
	&T_i(\alpha_j) = q^{-\delta_{j,i-1}+2\delta_{j,i}-\delta_{j,i+1}}\alpha_j,\quad
	T_i(\gamma_1) = \gamma_1,\quad
	T_i(\gamma_2) = \gamma_2, \\
	&U(\alpha_j) = q^{\delta_{j,0}-\delta_{j,1}}\alpha_j,\quad
	U(\gamma_1) = q\gamma_1,\quad
	U(\gamma_2) = q^{-1}\gamma_2, \\
	&V(\alpha_j) = \alpha_j,\quad
	V(\gamma_1) = q^2\gamma_1,\quad
	V(\gamma_2) = \gamma_2, \\
	&V'(\alpha_j) = \alpha_j,\quad
	V'(\gamma_1) = \gamma_1,\quad
	V'(\gamma_2) = q^4\gamma_2
\end{align*}
for $i,j=0,\ldots,3$, where $\delta_{i,j}$ is the Kronecker delta whose indices are congruent modulo $4$.

\begin{cor}
Through the confluence $6\to4$, the translations are reduced as follows.
\[\begin{array}{|c||c|c|c|c|c|c|c|}\hline
	Q_{11} & T_0 & T_1T_2 & T_3 & T_4 & VU_1 & V'U_1 & V' \\\hline
	Q_{102} & T_0 & T_1 & T_2 & T_3 & U & V & V' \\\hline
\end{array}\]
\end{cor}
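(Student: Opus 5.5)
The corollary should follow from the preceding theorem for the confluence $6\to4$, using only the fact that the confluence acts on words in the generators letter by letter. Each translation of $Q_{11}$ is a word in $r_0,\ldots,r_4,s_0,s_1,\pi_1,\pi_2$. I will rewrite it in terms of the compositions $r_0,\ r_1r_2r_1,\ r_3,\ r_4,\ r_2r_3r_4\pi_1\pi_2,\ r_2r_3r_4r_0s_1\pi_1^2\pi_2,\ r_2\pi_2\pi_1$. By the theorem these reduce to $r_0,r_1,r_2,r_3,\pi_1,\pi_2,\pi_3\pi_1\pi_3$ of $Q_{102}$. Substituting gives a word in the $Q_{102}$ generators, which I then identify with $T_i,U,V,V'$ using the braid relations of $A^{(1)}_3$ and the action of the $\pi_j$ on the indices of the $r_i$.

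For the $T_i$ I follow the model of the corollary for $4\to5$. $T_0,T_3,T_4$ of $Q_{11}$ are words in $r_0,r_1,r_2,r_3,r_4$ with $r_1,r_2$ appearing only in the combinations that can be regrouped, via the braid relations $r_1r_2r_1=r_2r_1r_2$ and commutations, into words in $r_0,\ r_1r_2r_1,\ r_3,\ r_4$. After regrouping, each is the corresponding $A_3^{(1)}$ translation word in $Q_{102}$. For $T_1T_2$ I use the standard identity that a product of adjacent fundamental translations can be written as the translation attached to the folded node $r_1r_2r_1$.

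For the remaining three entries I work instead at the level of the action on the simple roots, and only afterwards confirm the words. Write $w_1=r_2r_3r_4\pi_1\pi_2$ and $w_2=r_2r_3r_4r_0s_1\pi_1^2\pi_2$ for the $Q_{11}$ elements reducing to $\pi_1,\pi_2$. I will express $VU_1$, $V'U_1$ and $V'$ of $Q_{11}$ as words in $r_0,\ r_1r_2r_1,\ r_3,\ r_4,\ w_1,\ w_2,\ r_2\pi_2\pi_1$, and compare with $U=\pi_2\pi_1r_3r_2r_1$, $V=(\pi_3\pi_2\pi_1)^2$, $V'=(\pi_1\pi_3)^4$ in $Q_{102}$. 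For example, $(\pi_1\pi_3)^4$ equals $(\pi_1\cdot\pi_3\pi_1\pi_3)^2$ up to rearrangement, which lifts to $(w_1\,r_2\pi_2\pi_1)^2$. Simplifying this in $Q_{11}$ using $\pi_1r_i\pi_1^{-1}=r_{i+1}$ and $\pi_2r_i\pi_2=r_{5-i}$ should give $V'=\pi_1^5s_1$.

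As a consistency check, the root table of the theorem sends $\alpha_1\alpha_2\mapsto\alpha_1$ and $\alpha_1^{1/5}\cdots\beta_1\gamma^{1/5}\mapsto\gamma_1$, and similarly for $\gamma_2$. The shifts $VU_1$ and $V'U_1$ from the $Q_{11}$ formulas should therefore reproduce $U(\gamma_1)=q\gamma_1$, $U(\gamma_2)=q^{-1}\gamma_2$, and so on. This check is immediate.

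The main obstacle is the combinatorial word manipulation for $V'U_1\mapsto V$ and $V'\mapsto V'$. The extended affine Weyl group is not faithfully detected by the action on roots alone, so these identities require explicit rewriting with the diagram automorphisms. I would first try to reduce each side to the normal form (element of $\langle\pi\rangle$)$\cdot$(reduced word in the $r_i$), then compare. If that fails, I will verify the reduction directly on the coefficients $y_i$ via the $\varepsilon\to0$ limit, as in the proof for $r_0r_5r_0\to r_0$.
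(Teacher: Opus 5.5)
Your substitution argument works for $T_0,T_1T_2,T_3,T_4$ and for $VU_1$. For instance, $T_0=r_0(r_1r_2r_1)r_3r_4r_3(r_1r_2r_1)$ and $T_1T_2=(r_1r_2r_1)r_3r_4r_0r_4r_3$. For $VU_1$, set $w_1=r_2r_3r_4\pi_1\pi_2$, $w_2=r_2r_3r_4r_0s_1\pi_1^2\pi_2$ and $p_3=r_2\pi_2\pi_1$. Then $w_2w_1=s_1r_2\pi_1$ and $VU_1=w_2w_1\,r_4r_3(r_1r_2r_1)$, which maps to $\pi_2\pi_1r_3r_2r_1=U$.

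The gap is in $V'U_1\mapsto V$ and $V'\mapsto V'$. The simplification you expect is false. One has $w_1p_3=r_2r_3\pi_1^2$, so $(w_1p_3)^2=r_2r_3r_4r_0\pi_1^4$. This element fixes $\beta_0,\beta_1$ and sends $\alpha_1\mapsto q^{-1}\alpha_1$, $\alpha_2\mapsto q\alpha_2$, $\gamma\mapsto q^4\gamma$. By contrast, $V'=\pi_1^5s_1$ fixes every $\alpha_j$ and sends $\beta_0\mapsto q\beta_0$, $\gamma\mapsto q^5\gamma$. The two agree only on $\alpha_0,\alpha_1\alpha_2,\alpha_3,\alpha_4$ and on the preimages of $\gamma_1,\gamma_2$, which is why your root-level check cannot see the discrepancy. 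Their quotient $Z=V'(w_1p_3)^{-2}$ multiplies $(\alpha_1,\alpha_2,\beta_0,\beta_1,\gamma)$ by $(q,q^{-1},q,q^{-1},q)$. This is exactly the rescaling $\varepsilon\mapsto q\varepsilon$ of the confluence parameter: under $y_4\to\varepsilon$, $y_6\to\varepsilon^{-1}y_4$, these five quantities scale like $\varepsilon,\varepsilon^{-1},\varepsilon,\varepsilon^{-1},\varepsilon$.

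No other rewriting can repair this. Let $H$ be the group generated by the seven compositions of the theorem, let $W'=\langle r_0,r_1r_2r_1,r_3,r_4\rangle$, and let $\rho=w_2w_1$.
\begin{itemize}
\item One checks $w_1^2=1$, $w_1\rho w_1=\rho^{-1}$, $w_2=\rho w_1$ and $w_1p_3=\rho^2U_0$. Hence $H=(W'\times\langle U_0\rangle)\rtimes\langle\rho,w_1\rangle$.
\item Since $\pi_1$ has infinite order, every element of $H$ whose image in $\langle\pi_1,\pi_2\rangle$ is $\pi_1^5$ lies in $\rho^5(W'\times\langle U_0\rangle)$.
\item From $\rho^5=s_1s_0s_1s_0s_1\,r_2r_3r_4r_0r_1\,\pi_1^5$ you get $\rho^{-5}V'=r_1r_0r_4r_3r_2\,U_0^3$ and $\rho^{-5}V'U_1=r_1r_0r_4r_3r_2\,U_0^2$.
\item The element $r_1r_0r_4r_3r_2$ permutes the simple roots of $W'$ cyclically, $\alpha_0\mapsto\alpha_4\mapsto\alpha_3\mapsto\alpha_1\alpha_2\mapsto\alpha_0$. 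A nontrivial element with this property cannot lie in $W'$.
\end{itemize}

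So $V'$ and $V'U_1$ are not words in the reducible compositions at all; this is already visible on the roots. These two entries of the corollary are therefore not a formal consequence of the theorem. What you list as a fallback is the essential step. You must take the $\varepsilon\to0$ limit of the actions of $\pi_1^5s_1$ and $\pi_1^5s_0$ on the $y_i$ directly, or equivalently show that the extra translation $Z$ degenerates to the identity. Then identify the results with $(\pi_1\pi_3)^4$ and $(\pi_3\pi_2\pi_1)^2$ in $Q_{102}$.
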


Note that the translation $\tau_c$ in $Q_{11}$ is reduced to that $\tau_c=V=\pi_3\pi_2\pi_1\pi_3\pi_2\pi_1$ in $Q_{102}$ through the confluence $6\to4$.

%%%%%%%%%%%%%%%%%%%%%%%%%%%%%%%%%%%%%%%%%%%%%%%%%%%%%%%%%%%%%%%%
\subsection{Confluence $5\to8$}

\begin{figure}[h]
	\begin{center}
	%\fbox{
	\begin{picture}(100,135)
		\put(68,69){\small$1$}\put(105,90){\small$2$}
		\put(128,45){\small$3$}\put(105,45){\small$4$}
		\put(28,90){\small$5$}\put(105,0){\small$6$}
		\put(28,0){\small$7$}\put(68,21){\small$8$}
		\put(28,45){\small$9$}\put(1,45){\small$10$}
		\put(100,93.5){\vector(-1,0){60}}%2->5
		\put(40,89){\vector(2,-1){23}}%5->1
		\put(77,78){\vector(2,1){23}}%1->2
		\put(23,86){\vector(-1,-2){15}}%5->10
		\put(30.5,56){\vector(0,1){30}}%9->5
		\put(63,66){\vector(-2,-1){24}}%1->9
		\put(15,56){\vector(3,1){48}}%10->1
		\put(99,53){\vector(-3,2){21}}%4->1
		\put(108,86){\vector(0,-1){31}}%2->4
		\put(128,56){\vector(-1,2){15}}%3->2
		\put(78,72){\vector(2,-1){45}}%1->3
		\put(62,24){\vector(-2,1){46}}%8->10
		\put(9,41){\vector(1,-2){15}}%10->7
		\put(30.5,11){\vector(0,1){30}}%7->9
		\put(39,43){\vector(2,-1){23}}%9->8
		\put(124,45){\vector(-2,-1){43}}%3->8
		\put(80,30){\vector(3,2){20}}%4-8
		\put(108,41){\vector(0,-1){30}}%4->6
		\put(114,12){\vector(1,2){14}}%6->3	
		\put(62,19){\vector(-2,-1){22}}%7-8
		\put(102,9){\vector(-2,1){22}}%6->5
		\put(40,3){\vector(1,0){60}}%7->6
	\end{picture}
	%}
	\caption{Quiver $Q_{103}$}\label{Fig:deg_q-Garnier_103}
	\end{center}
\end{figure}
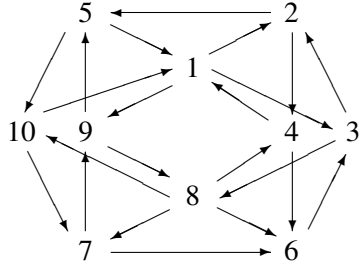

We change the vertex $11$ to $5$ after the confluence.
In a similar manner to Section \ref{Sec:Affine_Weyl}, we obtain simple reflections
\begin{align*}
	&r_0 = \mu_1\mu_2(2,5)\mu_2\mu_1,\quad
	r_1 = \mu_3(3,4)\mu_3,\quad
	r_2 = \mu_6\mu_7(7,8)\mu_7\mu_6,\quad
	r_3 = \mu_9(9,10)\mu_9, \\
	&s_0 = \mu_1\mu_4\mu_8(8,9)\mu_8\mu_4\mu_1,\quad
	s_1 = \mu_2\mu_3\mu_6\mu_7\mu_{10}(10,5)\mu_{10}\mu_7\mu_6\mu_3\mu_2,
\end{align*}
simple roots
\[
	\alpha_0 = y_1y_2y_5,\quad
	\alpha_1 = y_3y_4,\quad
	\alpha_3 = y_6y_7y_8,\quad
	\alpha_5 = y_9y_{10},\quad
	\beta_0 = y_1y_4y_8y_9,\quad
	\beta_1 = y_2y_3y_5y_6y_7y_{10}
\]
and a null root
\[
	q = \prod_{i=0}^{3}\alpha_i = \beta_0\beta_1 = \prod_{i=1}^{10}y_i
\]
from the quiver $Q_{103}$.
We omit the actions of the simple reflections on the coefficients, the simple roots and the null root here because they are similar to those given in Section \ref{Sec:Affine_Weyl}.

\begin{fact}[\cite{MOT1}]
The groups $\langle r_0,\ldots,r_3\rangle$ and $\langle s_0,s_1\rangle$ are isomorphic to the affine Weyl groups of type $A^{(1)}_3$ and $A^{(1)}_1$ respectively.
Moreover, those two groups are mutually commutative.
\end{fact}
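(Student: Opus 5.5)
The statement to prove is the Fact for $Q_{103}$: $\langle r_0,\ldots,r_3\rangle$ is isomorphic to the affine Weyl group of type $A^{(1)}_3$, $\langle s_0,s_1\rangle$ is isomorphic to that of type $A^{(1)}_1$, and the two groups commute.

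I would follow the strategy of the MOT construction rather than check the relations directly on rational functions. The first step is to locate, in $Q_{103}$, the cycles underlying each generator, as was done for $Q_{12}$. For $r_1=\mu_3(3,4)\mu_3$ and $r_3=\mu_9(9,10)\mu_9$ these are the 2-cycles $3\leftrightarrow4$ and $9\leftrightarrow10$. For $r_0$ and $r_2$ they are the 3-cycles $1\to2\to5\to1$ and $6\to7\to8\to6$. For $s_0$ it is the 4-cycle $1\to4\to8\to9\to1$. For $s_1$ it is the 6-cycle $2\to3\to6\to7\to10\to5\to2$. For each cycle $C$ and each vertex $v\notin C$ I would verify from Figure~\ref{Fig:deg_q-Garnier_103} that the number of arrows from $v$ to $C$ equals the number from $C$ to $v$. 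This balancing condition is exactly the hypothesis of the MOT theorem in \cite{MOT1}. It guarantees that each composite $\mu_{c_1}\cdots\mu_{c_{k-1}}(c_{k-1},c_k)\mu_{c_{k-1}}\cdots\mu_{c_1}$ preserves $Q_{103}$, is an involution, and acts on the cycle coefficients by the standard formulas with $Y$-type polynomials.

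Next I would compute the action on the monomials $\alpha_i$, $\beta_k$, $q$. Using the mutation rule for coefficients, one checks that
\[
r_i(\alpha_j)=\alpha_j\alpha_i^{-a_{i,j}},\qquad s_k(\beta_l)=\beta_l\beta_k^{-b_{k,l}},
\]
and that $r_i$ fixes $\beta_l$ while $s_k$ fixes $\alpha_j$. Here $(a_{i,j})$ is of type $A^{(1)}_3$ and $(b_{k,l})$ is of type $A^{(1)}_1$. Adjacency in the Cartan matrix corresponds to two cycles sharing arrows. For example, cycles $0$ and $1$ are linked through the arrows $1\to3$ and $3\to2$, and cycles $3$ and $0$ are linked through $5\to10$ and $9\to5$. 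Non-adjacent cycles, such as $0$ and $2$ or $1$ and $3$, should exchange equal numbers of arrows in each direction with no net effect.

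The key step is proving the braid and commutation relations as identities of birational maps, not only on the roots. I would invoke the MOT result: for balanced cycles, $(r_ir_j)^{m_{ij}}$ is a composition of mutations and permutations returning the seed to itself, and by periodicity of the associated $A_2$ or $A_1\times A_1$ subquiver it is the identity. The point I expect to be the main obstacle is confirming that the relevant pairs of cycles in $Q_{103}$ form the local configurations covered by \cite{MOT1}, especially the pairs involving the triangles and the 6-cycle. If that matching is unclear, I would fall back on direct symbolic verification of the relations on $y_1,\ldots,y_{10}$, as was done for the $r_0$ computation in Section~\ref{Sec:Degeneration_12_11}.

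Finally, faithfulness follows because the induced action on $(\alpha_i)$ and $(\beta_k)$ is the standard linear action on simple roots. That action is faithful for affine Weyl groups, so there is no extra kernel and the groups are isomorphic to the claimed affine Weyl groups. Commutativity of $\langle r_i\rangle$ with $\langle s_k\rangle$ comes from the same balancing argument applied to each pair of cycles.
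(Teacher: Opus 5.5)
The paper gives no argument for this Fact beyond attributing it to the construction of \cite{MOT1}. So your overall plan is the paper's route: check the MOT hypotheses on $Q_{103}$, then defer to that construction. The balancing condition does hold for all six cycles. Your faithfulness argument is correct and a useful addition: the $\alpha_i$ (resp.\ $\beta_k$) are multiplicatively independent because their supports are disjoint, and the level-zero action of an affine Weyl group on its root lattice is faithful. Your orientations for the $r_2$, $s_0$, $s_1$ cycles are reversed relative to the arrows (e.g.\ the arrows are $7\to6$, $6\to8$, $8\to7$). By the orientation independence noted in Section~\ref{Sec:Affine_Weyl}, this is harmless.

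Where you give reasons of your own, however, they do not work.

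\textbf{Commutativity.} The commutativity of $\langle r_i\rangle$ with $\langle s_k\rangle$ cannot come from ``the same balancing argument applied to each pair of cycles''. The $r$-cycles $\{1,2,5\},\{3,4\},\{6,7,8\},\{9,10\}$ and the $s$-cycles $\{1,4,8,9\},\{2,3,5,6,7,10\}$ are two partitions of the vertex set, so every $r$-cycle meets every $s$-cycle. For example, $r_0$ and $s_1$ both mutate at $2$ and both move vertex $5$. Arrow counting gives at most the root-level facts $r_i(\beta_l)=\beta_l$ and $s_k(\alpha_j)=\alpha_j$. The identity $r_is_k=s_kr_i$ of rational maps is a separate statement. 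Your arrow-adjacency heuristic also fails to transfer: the cycles $\{1,2,5\}$ and $\{1,4,8,9\}$ are joined by $2\to4$ and $9\to5$, yet $r_0s_0=s_0r_0$.

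\textbf{Braid relations.} The claim that $(r_ir_j)^{m_{ij}}$ ``returns the seed, hence is the identity by periodicity of an $A_2$ or $A_1\times A_1$ subquiver'' is not an argument. A mutation--permutation word can restore $Q_{103}$ and fix every simple root without fixing the $y_i$. The paper's own remark that $\pi_1^4$ fixes all roots but not the coefficients is exactly such an example. Moreover, $(r_0r_1)^3$ mutates at $1,2,3$, whose neighbourhoods extend well beyond any $A_2$ subquiver.

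\textbf{How to close the gap.} You need a genuine $y$-level statement, in one of two forms:
\begin{itemize}
\item the precise result of \cite{MOT1} that handles mutually intersecting cycle systems, which is the same input the paper uses for $Q_{12}$, where the $r$-, $s$- and $s'$-cycles also intersect; or
\item a finite verification of $(r_ir_{i\pm1})^3=1$, $(r_ir_{i+2})^2=1$ and $r_is_k=s_kr_i$ on $y_1,\ldots,y_{10}$. The involution relations $r_i^2=s_k^2=1$ are automatic from the palindromic words. This check can be done directly by computer, or via tropical ($c$-vector) periodicity.
\end{itemize}
Your stated fallback of direct symbolic verification is what actually carries the proof; the heuristics around it should be dropped.
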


In addition, we introduce Dynkin diagram automorphisms
\begin{align*}
	&\pi_1 = (1,3,8,10)(2,4,6,7,9,5)\mu_7\mu_2, \\
	&\pi_2 = \iota(2,5)(3,10)(4,9)(6,7), \\
	&\pi_3 = (1,8)(2,7)(3,10)(4,9)(5,6)
\end{align*}
and a variable
\[
	\gamma = \frac{y_2y_4y_6}{y_5y_7y_9}.
\]
Similarly to Section \ref{Sec:Degeneration_12_11}, we call $\gamma$ a simple root for the sake of convenience.
Then $\gamma$ is invariant under the action of the simple reflections and the Dynkin diagram automorphisms act on the simple roots and the null root as
\begin{align*}
	&\pi_1(\alpha_i) = \alpha_{i+1}\quad (i=0,\ldots,2),\quad
	\pi_1(\alpha_3) = \alpha_0, \\
	&\pi_1(\beta_0) = \beta_1,\quad
	\pi_1(\beta_1) = \beta_0,\quad
	\pi_1(\gamma) = \gamma,\quad
	\pi_1(q) = q, \\
	&\pi_2(\alpha_0) = \frac{1}{\alpha_0},\quad
	\pi_2(\alpha_i) = \frac{1}{\alpha_{4-i}}\quad (i=1,2,3), \\
	&\pi_2(\beta_0) = \frac{1}{\beta_0},\quad
	\pi_2(\beta_1) = \frac{1}{\beta_1},\quad
	\pi_2(\gamma) = \gamma,\quad
	\pi_2(q) = \frac1q, \\
	&\pi_3(\alpha_0) = \alpha_2,\quad
	\pi_3(\alpha_1) = \alpha_3,\quad
	\pi_3(\alpha_2) = \alpha_0,\quad
	\pi_3(\alpha_3) = \alpha_1, \\
	&\pi_3(\beta_0) = \beta_0,\quad
	\pi_3(\beta_1) = \beta_1,\quad
	\pi_3(\gamma) = \frac{1}{\gamma},\quad
	\pi_3(q) = q.
\end{align*}
We omit their actions on the coefficients here.

\begin{thm}
Through the confluence $5\to8$, the simple reflections and the Dynkin diagram automorphisms are reduced as follows.
\[\begin{array}{|c||c|c|c|c|c|c|c|c|}\hline
	Q_{11} & r_0 & r_1 & r_2r_3r_2 & r_4 & s_0 & s_1 & r_3\pi_1 & \pi_2 \\\hline
	Q_{103} & r_0 & r_1 & r_2 & r_3 & s_0 & s_1 & \pi_1 & \pi_2 \\\hline
\end{array}\]
The simple roots are also reduced as follows.
\[\begin{array}{|c||c|c|c|c|c|c|c|}\hline
	Q_{11} & \alpha_0 & \alpha_1 & \alpha_2\alpha_3 & \alpha_4 & \beta_0 & \beta_1 & \alpha_1^{\frac15}\alpha_2^{\frac25}\alpha_3^{-\frac25}\alpha_4^{-\frac15}\gamma^{\frac15} \\\hline
	Q_{103} & \alpha_0 & \alpha_1 & \alpha_2 & \alpha_3 & \beta_0 & \beta_1 & \gamma \\\hline
\end{array}\]
\end{thm}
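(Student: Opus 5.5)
The plan follows the proof of the theorem for the confluence $12\to1$ in Section \ref{Sec:Degeneration_12_11}. The statement splits into two parts. The first concerns the simple reflections and the Dynkin diagram automorphisms, and I would verify it both at the level of mutations and permutations and at the level of the birational actions on the coefficients. The second concerns the simple roots and is a direct monomial computation.

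For the simple roots, write each entry of the $Q_{11}$ row as a monomial in $y_1,\ldots,y_{11}$. Then substitute $y_5\to\varepsilon^{-1}y_8$ and $y_8\to\varepsilon$, and rename $y_{11}\to y_5$. Each monomial in the table depends on $y_5,y_8$ only through the product $y_5y_8$, so the limit $\varepsilon\to0$ is trivial. For example, $\alpha_2\alpha_3=y_5y_6y_7y_8\to y_6y_7y_8$ and $\beta_0=y_1y_4y_5y_8y_9\to y_1y_4y_8y_9$. For the last column, I substitute $\gamma=y_2^5y_4^4y_6^3y_8^2y_{10}/(y_3y_5^2y_7^3y_9^4y_{11}^5)$ and check that the exponents of $y_5$ and $y_8$ agree after multiplying by $\alpha_1^{1/5}\alpha_2^{2/5}\alpha_3^{-2/5}\alpha_4^{-1/5}$. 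The fractional powers are chosen precisely to make the result an integral monomial in $y_5y_8$ and the other variables. I then confirm that this monomial equals $y_2y_4y_6/(y_5y_7y_9)$ after the renaming.

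For the reflections, $r_0,r_1,r_4,s_0,s_1$ are expected to be unchanged or essentially unchanged, and the main case is $r_2r_3r_2\to r_2$. At the mutation level, I would use the identity $\mu_i(i,j)\mu_i=\mu_j(i,j)\mu_j$ quoted after the $12\to1$ proof to rewrite $r_2r_3r_2$ as
\[
\mu_6\mu_5\mu_7(7,8)\mu_7\mu_5\mu_6 .
\]
Sending $5\to8$, the inner $\mu_5,\mu_7,(7,8),\mu_7,\mu_5$ collapses to $\mu_7(7,8)\mu_7$, giving $\mu_6\mu_7(7,8)\mu_7\mu_6=r_2$ in $Q_{103}$. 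I would then confirm this on coefficients. Compute $r_2r_3r_2(y_i)$ explicitly on $Q_{11}$; it is rational with factors of the form $Y_{i,j,k}=1+y_i+y_iy_j+y_iy_jy_k$, as in the $r_0r_5r_0$ computation. After the substitution and the limit, each $Y_{i,j,k}$ involving $y_5$ and $y_8$ degenerates to a $Y_{i,j}$. The resulting formulas should agree with $r_2$ on $Q_{103}$. The reflections $s_0,s_1$ are handled similarly, with $\mu_5\mu_8$ merging into $\mu_8$ inside $s_0$.

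The main obstacle is the automorphism $r_3\pi_1\to\pi_1$. In $Q_{11}$, $\pi_1$ is a cyclic permutation composed with $\mu_2$. Multiplying by $r_3$ should turn it into a word whose permutation part sends the pair $\{5,8\}$ to itself compatibly with the confluence. The target is $(1,3,8,10)(2,4,6,7,9,5)\mu_7\mu_2$ in $Q_{103}$, with the extra mutation $\mu_7$ coming from the $\mu_7$ inside $r_3$. I would first find the mutation-level identity by tracking the quiver directly. If that fails, I would compare coefficient actions after the limit, checking that the images of all ten $y_i$ match. $\pi_2$ should be immediate, since it is a reversal times involutive permutations under which $5,8$ map to $8,5$ up to the renaming. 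As a final consistency check, the reduced actions on $\alpha_i,\beta_k,\gamma$ must reproduce those listed before the theorem.
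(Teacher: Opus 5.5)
Your proposal follows the paper's proof, which the paper omits as ``similar'' to the $12\to1$ case. That proof evaluates each $Q_{11}$ element on $y_j$ $(j\neq5,8)$ and on $y_5y_8$. It then substitutes $y_5\to\varepsilon^{-1}y_8$, $y_8\to\varepsilon$, $y_{11}\to y_5$, lets $\varepsilon\to0$, and compares with $Q_{103}$. It also checks the roots as monomials, and you do that check correctly: the last entry is $y_2y_4y_6/(y_7y_9y_{11})$ before renaming. The coefficient-level computation you fall back on does go through, but several of your heuristic remarks are inaccurate.

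\textbf{Merging for $r_2r_3r_2$.} In $5\to8$ the mutation $\mu_5$ merges with $\mu_8$, not with $\mu_7$. So nothing justifies simply deleting $\mu_5$ from $\mu_6\mu_5\mu_7(7,8)\mu_7\mu_5\mu_6$. Write $r_3=\mu_8(7,8)\mu_8$ instead, so that $r_2r_3r_2=\mu_6\mu_5\mu_8(7,8)\mu_8\mu_5\mu_6\to\mu_6\mu_8(7,8)\mu_8\mu_6$. This equals $r_2$ of $Q_{103}$ by orientation independence for the $3$-cycle $6\to8\to7\to6$.

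\textbf{Degeneration of the $Y_{i,j,k}$.} Not every $Y_{i,j,k}$ simply drops to a $Y_{i,j}$. For example $Y_{5,7,6}\sim\varepsilon^{-1}y_8(1+y_7+y_7y_6)$ diverges. In $r_2r_3r_2(y_5y_8)=Y_{5,7,6}Y_{8,5,7}/(y_5y_7Y_{6,8,5}Y_{7,6,8})$ this pole cancels against the factor $y_5$ in the denominator. The limit is $Y_{8,7}/(y_7Y_{6,8})$, which is $r_2(y_8)$ in $Q_{103}$.

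\textbf{The pair $\{5,8\}$ under $r_3\pi_1$.} $r_3\pi_1$ does not send $\{5,8\}$ to itself. Its underlying permutation is $(1,3,5,8,10)(2,4,6,7,9,11)$, in which $5$ and $8$ are consecutive and merge into $(1,3,8,10)(2,4,6,7,9,5)$. Concretely, $r_3\pi_1(y_5)=1/y_8$ and $r_3\pi_1(y_8)=y_8y_{10}(1+y_7)/(1+y_8)$ diverge and vanish separately. Their product tends to $y_{10}(1+y_7)$, which is exactly $\pi_1(y_8)$ in $Q_{103}$.
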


We can prove this theorem in a similar manner to Section \ref{Sec:Degeneration_12_11}, whose detail we omit here.
Note that we replace the coefficients $y_{11}$ with $y_5$ after the confluence procedure.
Also note that we have not obtained the Dynkin diagram automorphism $\pi_3$ through the confluence $5\to8$ yet.

The translation subgroup of $\langle r_0,\ldots,r_3,s_0,s_1,\pi_1,\pi_2,\pi_3\rangle$ is generated by
\begin{align*}
	&T_i = r_ir_{i+1}r_{i+2}r_{i+3}r_{i+2}r_{i+1}\quad (i=0,\ldots,3), \\
	&U_0 = s_0s_1,\quad
	U_1 = s_1s_0,\quad
	V = \pi_1r_3r_2r_1s_1,
\end{align*}
where the indices of $r_i$ are congruent modulo $4$.
They act on the simple roots as
\begin{align*}
	&T_i(\alpha_j) = q^{-\delta_{j,i-1}+2\delta_{j,i}-\delta_{j,i+1}}\alpha_j,\quad
	T_i(\beta_l) = \beta_l,\quad
	T_i(\gamma) = \gamma, \\
	&U_k(\alpha_j) = \alpha_j,\quad
	U_k(\beta_l) = q^{4\delta_{l,k}-2}\beta_l,\quad
	U_k(\gamma) = \gamma, \\
	&V(\alpha_j) = q^{\delta_{j,0}-\delta_{j,1}}\alpha_j,\quad
	V(\beta_l) = q^{\delta_{l,0}-\delta_{l,1}}\beta_l,\quad
	V(\gamma) = q\gamma
\end{align*}
for $i,j=0,\ldots,3$ and $k,l=0,1$, where $\delta_{i,j}$ is the Kronecker delta whose indices are congruent modulo $4$.

\begin{cor}
Through the confluence $5\to8$, the translations are reduced as follows.
\[\begin{array}{|c||c|c|c|c|c|c|c|}\hline
	Q_{11} & T_0 & T_1 & T_2T_3 & T_4 & U_0 & U_1 & V \\\hline
	Q_{103} & T_0 & T_1 & T_2 & T_3 & U_0 & U_1 & V \\\hline
\end{array}\]
\end{cor}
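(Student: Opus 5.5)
The statement is the corollary for the confluence $5\to8$. I would derive it from the preceding theorem, exactly as the analogous corollaries for $12\to1$, $4\to5$ and $6\to4$ follow from their theorems. The key point is that the confluence commutes with composition: if an element $w$ of the $Q_{11}$ group reduces to $\bar w$, and $w'$ reduces to $\bar w'$, then $ww'$ reduces to $\bar w\bar w'$. This holds because each reduction is obtained by substituting $y_5\to\varepsilon^{-1}y_8$, $y_8\to\varepsilon$ into the birational actions and letting $\varepsilon\to0$. These limits are compatible with composition as long as the substitution commutes with the actions up to terms vanishing as $\varepsilon\to0$. That compatibility is already implicit in the theorem's table, which reduces products such as $r_2r_3r_2$ and $r_3\pi_1$.

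With this, I would treat each column by writing the $Q_{11}$ translation as a word in the generators and regrouping it into blocks that appear in the theorem's table.

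\textbf{$T_0$, $T_1$, $T_4$.} In $Q_{11}$, $T_i=r_ir_{i+1}\cdots r_{i+4}\cdots r_{i+1}$ with indices modulo $5$. Wherever $r_2$ and $r_3$ occur adjacently, they appear in the pattern $r_2r_3r_2$, or as $r_3r_2$ and $r_2r_3$ halves that can be rearranged using the braid relations. Each such block $r_2r_3r_2$ reduces to $r_2$ of $Q_{103}$, while $r_0,r_1,r_4$ reduce to $r_0,r_1,r_3$. Doing this word manipulation concretely should give $T_0\mapsto T_0$, $T_1\mapsto T_1$ and $T_4\mapsto T_3$.

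\textbf{$T_2T_3$.} I expect this product to telescope into a word containing only the blocks $r_2r_3r_2$, giving $T_2$ of $Q_{103}$.

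\textbf{$U_0$, $U_1$.} These are immediate, since $s_0\mapsto s_0$ and $s_1\mapsto s_1$.

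\textbf{$V$.} In $Q_{11}$, $V=\pi_1r_4r_3r_2r_1s_1$, and I would rewrite it as $(r_3\pi_1)\,\pi_1^{-1}r_3\pi_1\,r_4r_3r_2r_1s_1$. Since $\pi_1$ shifts indices, $\pi_1^{-1}r_3\pi_1=r_2$. This gives $V=(r_3\pi_1)\,r_2r_4r_3r_2r_1s_1$. Using the commutation $r_2r_4=r_4r_2$, this becomes $(r_3\pi_1)\,r_4(r_2r_3r_2)r_1s_1$, which reduces to $\pi_1r_3r_2r_1s_1=V$ of $Q_{103}$.

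\textbf{Consistency check.} Independently, I would verify each identification on the root variables using the theorem's second table. For example, $T_2T_3$ in $Q_{11}$ multiplies $\alpha_2\alpha_3$ by $q^{2}$, $\alpha_1$ by $q^{-1}$ and $\alpha_4$ by $q^{-1}$. Under the root reduction this matches $T_2$ of $Q_{103}$. Similarly, $V$ of $Q_{11}$ fixes $\alpha_1^{1/5}\alpha_2^{2/5}\alpha_3^{-2/5}\alpha_4^{-1/5}\gamma^{1/5}$ up to the factor $q^{1}$, matching $V(\gamma)=q\gamma$ in $Q_{103}$.

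The main obstacle is the word rewriting for $T_0$ and $T_1$. The factors $r_2$ and $r_3$ may not appear in these words as clean $r_2r_3r_2$ blocks, so braid relations must be applied with care to group them. An element like $r_2$ alone has no reduction, so the regrouping has to succeed completely. If it does not, I would fall back on verifying the reductions directly on coefficients via the $\varepsilon\to0$ limit.
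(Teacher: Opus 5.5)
Your route matches the paper's implicit one: read the corollary off the preceding theorem by writing each $Q_{11}$ translation as a word in the reducible elements $r_0,r_1,r_2r_3r_2,r_4,s_0,s_1,r_3\pi_1$. Your rewriting of $V$ is correct.

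\textbf{The regroupings you left undone.} These are routine, so what you call the main obstacle is not one. Using the $A^{(1)}_4$ relations ($r_3r_4r_3=r_4r_3r_4$, $r_4r_0r_4=r_0r_4r_0$, and $r_2$ commuting with $r_0,r_4$, $r_3$ with $r_0$), one gets
$T_0=r_0r_1r_4(r_2r_3r_2)r_4r_1$, $T_1=r_1r_0r_4(r_2r_3r_2)r_4r_0$ and $T_4=r_4r_0r_1(r_2r_3r_2)r_1r_0$. After cancelling $r_3^2,r_4^2,r_0^2,r_1^2$ and moving $r_2$ past $r_4r_0$, you get $T_2T_3=(r_2r_3r_2)r_4r_0r_1r_0r_4$.
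These reduce to $r_0r_1r_3r_2r_3r_1$, $r_1r_0r_3r_2r_3r_0$, $T_3$ and $T_2$ respectively. The first two equal $T_0$ and $T_1$ of $Q_{103}$ once you use $r_3r_2r_3=r_2r_3r_2$, $r_0r_2=r_2r_0$ and $r_0r_3r_0=r_3r_0r_3$ in $A^{(1)}_3$.

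\textbf{Multiplicativity.} Compatibility of the limit with composition is not ``implicit in the theorem's table''. The table reduces $r_2r_3r_2$ and $r_3\pi_1$ as whole elements precisely because the individual factors do not reduce. What multiplicativity actually needs is that each reducible generator sends $y_8$ to $\varepsilon$ times a convergent nonzero factor, i.e.\ it preserves the regime $y_5\sim\varepsilon^{-1}$, $y_8\sim\varepsilon$. The paper also uses this tacitly.

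\textbf{Your consistency check for $V$ is miscomputed.} In $Q_{11}$ you have $V(\alpha_1)=q^{-1}\alpha_1$, $V$ fixes $\alpha_2,\alpha_3,\alpha_4$, and $V(\gamma)=q\gamma$. So $\alpha_1^{1/5}\alpha_2^{2/5}\alpha_3^{-2/5}\alpha_4^{-1/5}\gamma^{1/5}$ is multiplied by $q^{-1/5}q^{1/5}=1$, not by $q$.

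Done correctly, the check does not confirm the formula $V(\gamma)=q\gamma$ printed for $Q_{103}$; it shows that formula is a slip. In $Q_{103}$ one has $V=\pi_1r_3r_2r_1s_1$, $\gamma$ is invariant under the simple reflections, and $\pi_1(\gamma)=\gamma$, so $V(\gamma)=\gamma$. The corollary itself is unaffected, but you should not record that check as a match.
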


\begin{rem}
The action of $\tau_c$ in $Q_{11}$ on the coefficients is not convergent through the confluence $5\to8$.
Besides, the coefficients are not invariant under the action of $\pi_1^4$, although the simple roots are invariant.
We have not clarified the reasons for those phenomena.
\end{rem}

%%%%%%%%%%%%%%%%%%%%%%%%%%%%%%%%%%%%%%%%%%%%%%%%%%%%%%%%%%%%%%%%
\subsection{Confluence $11\to2$}

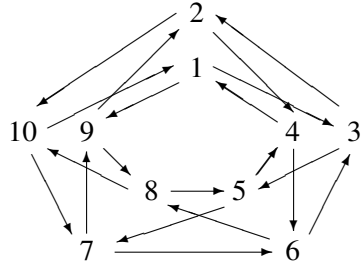
\begin{figure}[h]
	\begin{center}
	%\fbox{
	\begin{picture}(135,100)
		\put(69,69){\small$1$}\put(69,90){\small$2$}
		\put(128,45){\small$3$}\put(105,45){\small$4$}
		\put(85,22.5){\small$5$}\put(105,0){\small$6$}
		\put(28,0){\small$7$}\put(52,22.5){\small$8$}
		\put(28,45){\small$9$}\put(1,45){\small$10$}
		\put(62,26){\vector(1,0){20}}%8->5
		\put(41,3){\vector(1,0){59}}%7->6
		\put(64,93){\vector(-3,-2){52}}%2->10
		\put(35,57){\vector(1,1){30}}%9->2
		\put(65,67){\vector(-2,-1){27}}%9->1
		\put(16,50){\vector(2,1){47}}%10->1
		\put(102,52){\vector(-3,2){23}}%1->4
		\put(78,86){\vector(1,-1){30}}%4->2
		\put(125,58){\vector(-4,3){46}}%3->2
		\put(78,73){\vector(2,-1){45}}%1->3
		\put(46,27){\vector(-2,1){30}}%8-10
		\put(10,40){\vector(1,-2){15}}%7-10
		\put(30.5,10){\vector(0,1){31}}%7-9
		\put(37,42){\vector(1,-1){10}}%8-9
		\put(125,42){\vector(-2,-1){30}}%5-3
		\put(94,32){\vector(3,4){8}}%4-5
		\put(108,42){\vector(0,-1){31}}%4-6
		\put(114,10){\vector(1,2){15}}%3-6
		\put(82,20){\vector(-3,-1){40}}%7-5
		\put(99,8){\vector(-3,1){38}}%8-6
	\end{picture}
	%}
	\caption{Quiver $Q_{104}$}\label{Fig:deg_q-Garnier_104}
	\end{center}
\end{figure}

In a similar manner to Section \ref{Sec:Affine_Weyl}, we obtain simple reflections
\begin{align*}
	&r_i = \mu_{2i+1}(2i+1,2i+2)\mu_{2i+1}\quad (i=0,\ldots,4), \\
	&s_0 = \mu_1\mu_4\mu_5\mu_8(8,9)\mu_8\mu_5\mu_4\mu_1,\quad
	s_1 = \mu_2\mu_3\mu_7\mu_{10}(10,6)\mu_{10}\mu_7\mu_3\mu_2,
\end{align*}
simple roots
\[
	\alpha_i = y_{2i+1}y_{2i+2}\quad (i=0,\ldots,4),\quad
	\beta_0 = y_1y_4y_5y_8y_9,\quad
	\beta_1 = y_2y_3y_6y_7y_{10}
\]
and a null root
\[
	q = \prod_{i=0}^{4}\alpha_i = \beta_0\beta_1 = \prod_{i=1}^{10}y_i
\]
from the quiver $Q_{104}$.
In addition, we introduce Dynkin diagram automorphisms
\begin{align*}
	&\pi_1 = (1,4,5,8,9)(2,3,6,7,10), \\
	&\pi_2 = (1,2)(3,4)(5,6)(7,8)(9,10), \\
	&\pi_3 = \iota(3,10)(4,9)(5,8)(6,7).
\end{align*}
Then their actions on the coefficients, the simple roots and the null root have the same formulas as those given by Kajiwara, Noumi and Yamada in \cite{KNY}.
We omit them here.

\begin{fact}[\cite{KNY}]
The groups $\langle r_0,\ldots,r_4\rangle$ and $\langle s_0,s_1\rangle$ are isomorphic to the affine Weyl groups of type $A^{(1)}_4$ and $A^{(1)}_1$ respectively.
Moreover, those two groups are mutually commutative.
\end{fact}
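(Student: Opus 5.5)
We reduce the fact for $Q_{104}$ to the known Kajiwara--Noumi--Yamada representation of $\widetilde{W}((A_4+A_1)^{(1)})$ in \cite{KNY}. The plan is to compute the actions of $r_0,\ldots,r_4,s_0,s_1$ explicitly and observe that they coincide with the KNY birational formulas. The required group relations are then inherited from \cite{KNY}.

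\textbf{The simple reflections $r_i$.} First we read off $\Lambda$ for $Q_{104}$ from Figure \ref{Fig:deg_q-Garnier_104}. For each $i=0,\ldots,4$ we check that the vertices $2i+1,2i+2$ form a virtual $2$-cycle. Concretely, no arrow joins them, and every other vertex $v$ satisfies $\lambda_{v,2i+1}=\lambda_{v,2i+2}$. Then $\mu_{2i+1}(2i+1,2i+2)\mu_{2i+1}$ returns $Q_{104}$ to itself. Computing exactly as for $r_0$ in Section \ref{Sec:Affine_Weyl}, $r_i$ acts by $y_{2i+1}\mapsto y_{2i+2}^{-1}$, $y_{2i+2}\mapsto y_{2i+1}^{-1}$. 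On the neighbours of the pair it acts by multiplicative factors $(1+y_{2i+1})/(1+y_{2i+2})$ or their inverses. On the roots this gives $r_i(\alpha_j)=\alpha_j\alpha_i^{-a_{i,j}}$ with $a$ the Cartan matrix of type $A^{(1)}_4$. The relations $r_i^2=1$ and $(r_ir_{i\pm1})^3=1$ follow from \cite{KNY} once the formulas are matched. Alternatively they follow from the MOT argument \cite{MOT1}: every quiver obtained by mutating a virtual $2$-cycle is again of the same kind, so the braid relations hold on the $y$'s.

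\textbf{The reflections $s_0,s_1$ and commutativity.} Next we treat the $5$-cycles $1\to4\to5\to8\to9$ (closing back through $1$) and $2\to3\to6\to7\to10$. We verify the balancing condition: for every vertex $v$ off the cycle, the arrows from $v$ into the cycle equal those out of it. Applying the MOT lemma, the composite $\mu_1\mu_4\mu_5\mu_8(8,9)\mu_8\mu_5\mu_4\mu_1$ preserves the quiver. It acts on each $y'_j$ along the cycle by a ratio of cyclic sums $Y'$ of length $5$, analogous to the $s_k$ formulas in Section \ref{Sec:Affine_Weyl} but with period $10$. These should be precisely the KNY formulas for the $A^{(1)}_1$ part, with $s_k(\beta_l)=\beta_l\beta_k^{-b_{k,l}}$ and $s_k(\alpha_j)=\alpha_j$. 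For commutativity of $r_i$ with $s_k$, we observe that each $2$-cycle $\{2i+1,2i+2\}$ meets each $5$-cycle in exactly one vertex. MOT's criterion (\cite{MOT1}) for commuting cycle-mutations then applies, or one checks the commutation directly against the KNY formulas.

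\textbf{Main obstacle.} The hard step is the identification with \cite{KNY}: showing that the composite $s_k$ of nine mutations and a transposition equals KNY's $\pi$-type reflection built from the cyclic sums, including the correct index shift in the $y'$ relabelling. We would verify this by tracking the mutation sequence as in the worked $r_0$ example. The $y$-variable on the cycle telescopes into the partial sums $Y'$. After that, the relations $(s_0s_1)^\infty$ (no relation for $A^{(1)}_1$), the $A^{(1)}_4$ braid relations, and the commutativity are read off from \cite{KNY}.
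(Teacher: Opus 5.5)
Your route is the paper's own. The paper justifies this Fact only by noting that the actions of $r_i,s_k$ on the $y_i$ and on the roots coincide with the Kajiwara--Noumi--Yamada formulas, and then citing \cite{KNY}.

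However, one of your verification steps is false as written. For the pairs $\{2i+1,2i+2\}$ you require $\lambda_{v,2i+1}=\lambda_{v,2i+2}$ for every other vertex $v$. In $Q_{104}$ one has $1\to9$, $9\to2$, $4\to1$ and $2\to4$, so $\lambda_{9,1}=-1\neq1=\lambda_{9,2}$ and $\lambda_{4,1}=1\neq-1=\lambda_{4,2}$.

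The condition that actually holds is the same in/out balancing you impose on the $5$-cycles: $\lambda_{2i+1,2i+2}=0$ and $\lambda_{v,2i+1}+\lambda_{v,2i+2}=0$. This is what makes $\mu_{2i+1}(2i+1,2i+2)\mu_{2i+1}$ preserve the quiver. The first $\mu_{2i+1}$ gives the two vertices identical neighbourhoods, so the transposition is an automorphism of the intermediate quiver, and the second $\mu_{2i+1}$ undoes the first. With your sign, the transposition would already be an automorphism. The composite would then act on the quiver as $\mu_{2i+1}\mu_{2i+2}$, which in general creates extra arrows $k\to l$ instead of restoring the quiver.

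Some smaller corrections:

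\emph{Neighbour factors.} The factors on the neighbours carry a monomial. For example, $r_0(y_3)=y_1y_3(1+y_2)/(1+y_1)$ and $r_0(y_4)=y_2y_4(1+y_1)/(1+y_2)$, as in the worked example of Section 3. Bare ratios $(1+y_{2i+1})^{\pm1}(1+y_{2i+2})^{\mp1}$ cannot produce $r_0(\alpha_1)=\alpha_0\alpha_1$, which contradicts the root action you state.

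\emph{Mutation count.} Each $s_k$ uses eight mutations, not nine. Three shrink the $5$-cycle to a virtual $2$-cycle, one sits on each side of the transposition, and three undo the first three.

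\emph{Alternative MOT remark.} It only shows that each $r_i$ preserves $Q_{104}$; it does not yield $(r_ir_{i\pm1})^3=1$. So the relations must come from KNY or from a direct computation.

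\emph{Injectivity.} Isomorphism needs injectivity as well as the relations. This follows from the root action you computed: $\alpha_0,\ldots,\alpha_4$ (resp. $\beta_0,\beta_1$) are multiplicatively independent in $\mathbb{C}(y_i)$, and an affine Weyl group acts faithfully on its root lattice. For instance, $s_0s_1$ shifts $\beta_l$ by $q^{\pm2}$, so it has infinite order. State this explicitly rather than only saying the relations are inherited.
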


\begin{thm}
Through the confluence $11\to2$, the simple reflections and the Dynkin diagram automorphisms are reduced as follows.
\[\begin{array}{|c||c|c|c|c|c|c|c|c|c|c|}\hline
	Q_{11} & r_0 & r_1 & r_2 & r_3 & r_4 & s_0 & s_1 & \pi_1^6 & \pi_1^5 & \pi_2 \\\hline
	Q_{104} & r_0 & r_1 & r_2 & r_3 & r_4 & s_0 & s_1 & \pi_1 & \pi_2 & \pi_3 \\\hline
\end{array}\]
The simple roots are also reduced as follows.
\[\begin{array}{|c||c|c|c|c|c|c|c|}\hline
	Q_{11} & \alpha_0 & \alpha_1 & \alpha_2 & \alpha_3 & \alpha_4 & \beta_0 & \beta_1 \\\hline
	Q_{104} & \alpha_0 & \alpha_1 & \alpha_2 & \alpha_3 & \alpha_4 & \beta_0 & \beta_1 \\\hline
\end{array}\]
\end{thm}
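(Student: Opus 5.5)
We follow the template of the proof of the theorem for the confluence $12\to1$. The confluence $11\to2$ fuses vertices $11$ and $2$ of $Q_{11}$ and calls the new vertex $2$. On the coefficients it acts by $y_{11}\to\varepsilon^{-1}y_2$, $y_2\to\varepsilon$, $\varepsilon\to0$. The general principle is that a product of coefficients containing $y_2$ and $y_{11}$ only through $y_2y_{11}$ degenerates to the same product with $y_2y_{11}$ replaced by $y_2$. Since $Q_{104}$ has only $10$ vertices, the labels of $Q_{11}$ must be matched with those of $Q_{104}$. Comparing Figures \ref{Fig:deg_q-Garnier_11} and \ref{Fig:deg_q-Garnier_104}, the vertex $6$ of $Q_{104}$ absorbs the role of the pair $\{6,11\}$ through $s_1$; this is why $(10,11)$ in $s_1$ of $Q_{11}$ is replaced by $(10,6)$ in $Q_{104}$. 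We fix this identification first.

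\textbf{Simple roots.} With the identification fixed, the latter half is a direct substitution. For example,
\[
\alpha_0=y_1y_2y_{11}\ \to\ y_1y_2,\qquad \beta_1=y_2y_3y_6y_7y_{10}y_{11}\ \to\ y_2y_3y_6y_7y_{10},
\]
while $\alpha_1,\dots,\alpha_4$ and $\beta_0$ do not involve $y_2,y_{11}$ and are unchanged. In each case the result is the root of $Q_{104}$. We also check that $q=\prod y_i$ is preserved. The quantity $\gamma$ of $Q_{11}$ contains $y_2^5y_{11}^{-5}$, so it degenerates to $\varepsilon^{10}y_2^{-5}\times(\cdots)$. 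It therefore does not survive, which is consistent with its absence from the table.

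\textbf{Reflections.} For $r_1,\dots,r_4$ and $s_0$, the mutation sequences avoid vertices $2$ and $11$. Their degeneration is the termwise limit of the explicit formulas, as in the computation for $r_0r_5r_0$. For $r_0$ and $s_1$ we argue at the level of mutations, as in the proof for $12\to1$:
\[
r_0=\mu_1\mu_2(2,11)\mu_2\mu_1\ \xrightarrow{11\to2}\ \mu_1(1,2)\mu_1 .
\]
This uses the identity $\mu_1\mu_2(2,11)\mu_2\mu_1=\mu_2\mu_1(11,1)\mu_1\mu_2$ and the analogue of $\mu_{12}(11,12)\mu_{12}\to(11,1)$. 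We then confirm by explicit formulas, for instance
\[
r_0(y_1)=\frac{Y_{1,2}}{y_2Y_{11,1}},
\]
where $Y_{11,1}\to1$ and $y_2Y_{11,1}\to y_2\cdot(\text{limit})$. We check that the limit equals $1/y_2$ and that the other components give the KNY formulas of \cite{KNY}. The same treatment applies to $s_1$, whose cycle passes through $2,10,11$.

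\textbf{Automorphisms and main obstacle.} The hardest part is the automorphisms. $\pi_1$ of $Q_{11}$ contains the mutation $\mu_2$ and the cycle $(1,3,\dots,10)$ mixing $2$ and $11$. We compute $\pi_1^5$ and $\pi_1^6$ on the coefficients explicitly, using the stated actions $\pi_1(\alpha_i)=\alpha_{i+1}$, $\pi_1(\beta_0)=\beta_1$. In the limit they should reduce to pure permutations: $\pi_1^6$ to the $5$-cycles $(1,4,5,8,9)(2,3,6,7,10)$ and $\pi_1^5$ to $\pi_2$ of $Q_{104}$. A good consistency check is that the actions on the simple roots already match. We expect the main difficulty to be showing that the rational factors produced by $\mu_2$ cancel in the limit, leaving only a permutation. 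To do this we first push each $\mu_2$ through the permutations to collect the mutations, and then apply the gluing rule. Finally, $\pi_2=\iota(2,11)(3,10)\cdots$ degenerates directly: the transposition $(2,11)$ collapses under the gluing, giving $\pi_3$ of $Q_{104}$.
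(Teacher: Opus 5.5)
Your overall route is the one the paper intends ("in a similar manner to Section 5"): compute each element's action on $y_1,y_3,\dots,y_{10}$ and on the glued variable $y_2y_{11}$, substitute $y_{11}=\varepsilon^{-1}y_2$, $y_2=\varepsilon$, and let $\varepsilon\to0$. However, several of your concrete steps are wrong, and the entries that carry the content are not verified.

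First, there is no relabeling. The confluence is along the arrow $2\to11$: the arrows $11\to1$, $11\to10$, $9\to11$ become $2\to1$, $2\to10$, $9\to2$, and $2\to1$ cancels $1\to2$. The result is literally $Q_{104}$ on the labels $1,\dots,10$. Vertex $2$ absorbs $11$, which is what your own computations $\alpha_0\to y_1y_2$ and $\beta_1\to y_2y_3y_6y_7y_{10}$ already assume. Vertex $6$ absorbs nothing. The $(10,6)$ in $s_1$ of $Q_{104}$ comes from the $5$-cycle $2\to10\to7\to6\to3\to2$ that replaces the $6$-cycle through $11$.

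Second, in the $r_0$ check, $Y_{11,1}=1+\varepsilon^{-1}y_2(1+y_1)$ diverges; it does not tend to $1$. With your claim you would get $r_0(y_1)\to(1+y_1)/\varepsilon$. The correct mechanism is $\varepsilon\,Y_{11,1}\to y_2(1+y_1)$ together with $Y_{1,2}\to1+y_1$, which gives $1/y_2$. Likewise $r_0(y_2y_{11})=Y_{11,1}/(y_1y_{11}Y_{1,2})\to1/y_1$.

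Third, word-level "gluing" is not well defined. Starting from $\mu_2\mu_1(11,1)\mu_1\mu_2$, the analogue of $\mu_{12}(11,12)\mu_{12}\to(11,1)$ can only replace $(11,1)$ by $(2,1)$. But in $Q_{104}$ there is no arrow between $1$ and $2$, so $\mu_1$ and $\mu_2$ commute, and $\mu_2\mu_1(2,1)\mu_1\mu_2$ is just the transposition $(1,2)$, not $r_0$. The heuristic only works from the other orientation $\mu_2\mu_{11}(11,1)\mu_{11}\mu_2$, by deleting $\mu_{11}$. So "collect the mutations and apply the gluing rule" cannot be your argument for $\pi_1^5$ and $\pi_1^6$.

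What those entries need is the coefficient computation; agreement on simple roots is only a necessary check. Use the reading of $\pi_1=(1,3,\dots,10)\mu_2$ that reproduces $\pi_1(\alpha_i)=\alpha_{i+1}$ and $\pi_1(\beta_0)=\beta_1$. Then
\[
\pi_1(y_1)=y_3(1+y_2),\quad \pi_1(y_2)=\frac{y_2y_4}{1+y_2},\quad \pi_1(y_9)=\frac{y_2y_{11}}{1+y_2},\quad \pi_1(y_{10})=y_1(1+y_2),\quad \pi_1(y_{11})=\frac{1}{y_2},
\]
and $\pi_1(y_k)=y_{k+2}$ for $3\le k\le 8$. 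In the coordinates $(y_1,y_3,\dots,y_{10},y_2y_{11},y_2)$ this map is regular near $y_2=0$ and maps $\{y_2=0\}$ into itself; note $\pi_1(y_2y_{11})=y_4/(1+y_2)$. Hence limits commute with iteration. The limit of $\pi_1$ is the order-$10$ automorphism of $Q_{104}$ sending $y_1,y_3,y_5,y_7,y_9,y_2,y_4,y_6,y_8,y_{10}$ to $y_3,y_5,y_7,y_9,y_2,y_4,y_6,y_8,y_{10},y_1$ respectively. Its fifth and sixth powers are exactly $\pi_2$ and $\pi_1$ of $Q_{104}$. Directly, for example, $\pi_1^5(y_2y_{11})=y_1(1+y_2)Z_4/Z_5\to y_1$, where $Z_k=1+\sum_{j=1}^{k}y_2y_4\cdots y_{2j}$.

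An explicit argument of this kind is what your last paragraph is missing. Likewise $s_1$, which mutates at the glued vertex $2$, needs its own explicit limit rather than "the same treatment".
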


We can prove this theorem in a similar manner to Section \ref{Sec:Degeneration_12_11}, whose detail we omit here.

The translation subgroup of $\langle r_0,\ldots,r_4,s_0,s_1,\pi_1,\pi_2,\pi_3\rangle$ is generated by
\begin{align*}
	&T_i = r_{i+1}r_{i+2}r_{i+3}r_{i+4}r_{i+3}r_{i+2}r_{i+1}r_i\quad (i=0,\ldots,4), \\
	&U_0 = s_0s_1,\quad
	U_1 = s_1s_0,\quad
	V = \pi_1r_4\ldots r_1,\quad
	V' = \pi_2s_1,
\end{align*}
where the indices of $r_i$ are congruent modulo $5$.
They act on the simple roots as
\begin{align*}
	&T_i(\alpha_j) = q^{-\delta_{j,i-1}+2\delta_{j,i}-\delta_{j,i+1}}\alpha_j,\quad
	T_i(\beta_l) = \beta_l, \\
	&U_k(\alpha_j) = \alpha_j,\quad
	U_k(\beta_l) = q^{4\delta_{l,k}-2}\beta_l, \\
	&V(\alpha_j) = q^{\delta_{j,0}-\delta_{j,1}}\alpha_j,\quad
	V(\beta_l) = \beta_l, \\
	&V'(\alpha_j) = \alpha_j,\quad
	V'(\beta_l) = q^{\delta_{l,0}-\delta_{l,1}}\beta_l
\end{align*}
for $i,j=0,\ldots,4$ and $k,l=0,1$, where $\delta_{i,j}$ is the Kronecker delta whose indices are congruent modulo $5$.

\begin{cor}
Through the confluence $11\to2$, the translations are reduced as follows.
\[\begin{array}{|c||c|c|c|c|c|c|c|c|c|}\hline
	Q_{11} & T_0 & T_1 & T_2 & T_3 & T_4 & U_0 & U_1 & V'VU_1 & V' \\\hline
	Q_{104} & T_0 & T_1 & T_2 & T_3 & T_4 & U_0 & U_1 & V & V' \\\hline
\end{array}\]
\end{cor}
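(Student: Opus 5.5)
The statement to prove is the last Corollary: under the confluence $11\to2$, the translations of $Q_{11}$ reduce to those of $Q_{104}$ as in the table. The plan is to deduce this formally from the preceding Theorem for the confluence $11\to2$, rather than recomputing birational maps. Two inputs are needed. First, the confluence is compatible with composition. If $w,w'$ in $Q_{11}$ reduce to $\bar w,\bar w'$ in $Q_{104}$, then $ww'$ reduces to $\bar w\bar w'$. This holds because each reduction is obtained by substituting $y_{11}\to\varepsilon^{-1}y_2$, $y_2\to\varepsilon$ and letting $\varepsilon\to0$, and the limiting maps compose as long as the intermediate limits exist. The Theorem asserts they do for the generators. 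Second, every translation is written explicitly as a word in the generators listed in the Theorem.

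The key steps are as follows.

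\textbf{Step 1: the $T_i$.} In $Q_{11}$ we have $T_i=r_ir_{i+1}\cdots r_{i+4}r_{i+3}\cdots r_{i+1}$. Each $r_j$ reduces to $r_j$ by the Theorem, so $T_i$ reduces to the same word in $Q_{104}$. That word has the form $r_i(r_{i+1}\cdots r_{i+4}\cdots r_{i+1})$. The $Q_{104}$ translation is instead written as $(r_{i+1}\cdots r_{i+4}\cdots r_{i+1})r_i$. These two expressions coincide as elements of $\langle r_0,\ldots,r_4\rangle$ by Fact [KNY]. To confirm this, I would compare their actions on the $\alpha_j$. Both give $q^{-\delta_{j,i-1}+2\delta_{j,i}-\delta_{j,i+1}}\alpha_j$, and both are translations acting on the same lattice.

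\textbf{Step 2: the $U_k$.} Since $s_0\mapsto s_0$ and $s_1\mapsto s_1$, the products $U_0=s_0s_1$ and $U_1=s_1s_0$ reduce to themselves.

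\textbf{Step 3: $V'$.} In $Q_{11}$, $V'=\pi_1^5s_1$. By the Theorem, $\pi_1^5\mapsto\pi_2$ and $s_1\mapsto s_1$, so $V'\mapsto\pi_2s_1$, which is the $V'$ of $Q_{104}$.

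\textbf{Step 4: $V'VU_1$.} In $Q_{11}$, $V=\pi_1r_4r_3r_2r_1s_1$, so
\[
V'VU_1=\pi_1^5s_1\,\pi_1r_4r_3r_2r_1s_1\,s_1s_0 .
\]
Using $s_1^2=1$ and the relations for moving $\pi_1$ past $s_1$ and the $r_j$ (conjugation by $\pi_1$ permutes the generators), I would rewrite this word with $\pi_1^6$ collected on the left. The natural target form is $\pi_1^6 r_4\cdots r_1\cdot w$, where $w$ is a word in $s_0,s_1$ that should collapse to the identity. By the Theorem, $\pi_1^6$ reduces to $\pi_1$, so the whole word would reduce to $\pi_1r_4\cdots r_1=V$ in $Q_{104}$.

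As a consistency check on the simple roots, the left side acts as $V'VU_1(\beta_l)=q^{(\delta_{l,0}-\delta_{l,1})+(\delta_{l,0}-\delta_{l,1})+(4\delta_{l,1}-2)}\beta_l=\beta_l$. It also acts as $V'VU_1(\alpha_j)=q^{\delta_{j,0}-\delta_{j,1}}\alpha_j$. These agree with the action of $V$ in $Q_{104}$, so the root-level statement follows immediately from the root table of the Theorem.

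\textbf{Main obstacle.} The hard part is Step 4 at the level of coefficients rather than roots. The difficulty is how $\pi_1$ of $Q_{11}$, which contains $\mu_2$, commutes past $s_1$. This is needed to identify the $s$-part as trivial after the limit. If the group-theoretic rewriting is awkward, the fallback is direct verification. I would compute $V'VU_1$ on $y_1,\ldots,y_{11}$ in $Q_{11}$, apply the substitution $y_{11}\to\varepsilon^{-1}y_2$, $y_2\to\varepsilon$, take $\varepsilon\to0$, and compare with the known KNY formulas for $V$ in $Q_{104}$. This is the same procedure as in the proof given for the confluence $12\to1$.
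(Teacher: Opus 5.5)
Your overall route, writing each translation as a word in the generators and applying the table of the preceding Theorem, is the intended one. However, Step 1 rests on a false identity.

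Put $w=r_{i+1}r_{i+2}r_{i+3}r_{i+4}r_{i+3}r_{i+2}r_{i+1}$. This is a palindrome in involutions, so $w^2=1$; it is the reflection in $\alpha_{i+1}+\cdots+\alpha_{i+4}$. Hence $(r_iw)^{-1}=wr_i$. The two words $r_iw$ and $wr_i$ are therefore mutually inverse, and they coincide only if $(r_iw)^2=1$, which is impossible for a nontrivial translation.

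You can see this directly from $r_i(\alpha_j)=\alpha_j\alpha_i^{-a_{i,j}}$ and the composition convention of Section 4:
\[
r_0r_1r_2r_3r_4r_3r_2r_1(\alpha_0)=q^{2}\alpha_0,\qquad r_1r_2r_3r_4r_3r_2r_1r_0(\alpha_0)=q^{-2}\alpha_0 .
\]
Your ``confirmation'' quoted the printed action formula for $Q_{104}$ instead of computing it. That formula is inconsistent with the printed word $r_{i+1}\cdots r_{i+4}\cdots r_{i+1}r_i$. It matches only the word $r_ir_{i+1}\cdots r_{i+4}\cdots r_{i+1}$, which is the pattern used for $T_i$ in every other section.

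So the $Q_{104}$ definition must be read as $T_i=r_ir_{i+1}r_{i+2}r_{i+3}r_{i+4}r_{i+3}r_{i+2}r_{i+1}$. With that reading the $T_i$ column follows immediately from $r_j\mapsto r_j$, and no identity is needed. With the literal printed word, the image of $T_i$ would be $T_i^{-1}$, and that column could not be proved at all.

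Steps 2 and 3 are fine. In Step 4 you can resolve your ``main obstacle'' explicitly. Since $\pi_1$ is a Dynkin diagram automorphism exchanging $\beta_0$ and $\beta_1$, you have $s_1\pi_1=\pi_1s_0$. Combined with $r_js_0=s_0r_j$ and $s_0^2=1$, this gives
\[
V'VU_1=\pi_1^5s_1\pi_1r_4r_3r_2r_1s_0=\pi_1^6r_4r_3r_2r_1\ \xrightarrow{11\to2}\ \pi_1r_4r_3r_2r_1=V .
\]
Keeping $\pi_1^6$ together as a single block is essential, because $\pi_1$ by itself is not reduced by the Theorem.
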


Note that the translation $\tau_c$ in $Q_{11}$ is reduced to that $\tau_c=V'U_1=\pi_2s_0$ in $Q_{104}$ through the confluence $11\to2$.

%%%%%%%%%%%%%%%%%%%%%%%%%%%%%%%%%%%%%%%%%%%%%%%%%%%%%%%%%%%%%%%%
\subsection{Confluence $1\to11$}

\begin{figure}[h]
	\begin{center}
	%\fbox{
	\begin{picture}(135,100)
	\put(90,80){\small$1$}\put(116,80){\small$2$}
	\put(116,40){\small$3$}\put(90,40){\small$4$}
	\put(65,18){\small$5$}\put(65,0){\small$6$}	
	\put(13,40){\small$7$}\put(40,40){\small$8$}
	\put(40,80){\small$9$}\put(10,80){\small$10$}
	\put(75,28){\vector(1,1){11}}%3->5
	\put(50,38){\vector(1,-1){11}}%5->8
	\put(92,37){\vector(-2,-3){18}}%4-6
	\put(60,22){\vector(-2,1){38}}%7-5
	\put(112,40){\vector(-2,-1){37}}%3-5
	\put(62,10){\vector(-2,3){18}}%6-8
	\put(75,5){\vector(4,3){41}}%6-3
	\put(19,37){\vector(4,-3){42}}%7-6
	\put(15.5,76){\vector(0,-1){25}}%7-10
	\put(42.5,75){\vector(0,-1){25}}%8-9
	\put(92.5,52){\vector(0,1){25}}%1-4
	\put(118.5,52){\vector(0,1){25}}%2-3
	\put(36,51){\vector(-2,3){16}}%8-10
	\put(20,51){\vector(2,3){16}}%7-9
	\put(97,76){\vector(2,-3){16}}%1-3
	\put(113,76){\vector(-2,-3){16}}%2-3
	\end{picture}
	%}
	\caption{Quiver $Q_{105}$}\label{Fig:deg_q-Garnier_105}
	\end{center}
\end{figure}
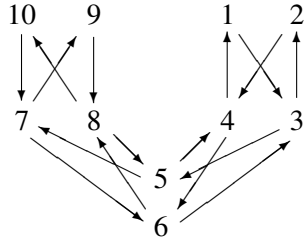

We change the vertex $11$ to $1$ after the confluence.
In a similar manner to Section \ref{Sec:Affine_Weyl}, we obtain simple reflections
\[
	r_i = \mu_{2i-1}(2i-1,2i)\mu_{2i-1}\quad (i=1,\ldots,5)
\]
and simple roots
\[
	\alpha_i = y_{2i-1}y_{2i}\quad (i=1,\ldots,5)
\]
from the quiver $Q_{105}$.
Then their actions on the coefficients and the simple roots have the same formula as those given by Inoue, Lam and Pylyavskyy in \cite{ILP}.
We omit them here.

\begin{fact}[\cite{ILP}]
The group $\langle r_1,\ldots,r_5\rangle$ is isomorphic to the finite Weyl group of type $A_5$.
\end{fact}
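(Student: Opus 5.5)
The plan is to realize the claimed isomorphism as a pair of mutually inverse homomorphisms. One goes from the abstract Coxeter group $W(A_5)$ onto $\langle r_1,\ldots,r_5\rangle$; the other goes back via the action on the simple roots $\alpha_i=y_{2i-1}y_{2i}$. Here $W(A_5)$ has generators $r_1,\ldots,r_5$ and relations $r_i^2=1$, $(r_ir_{i\pm1})^3=1$ and $(r_ir_j)^2=1$ for $|i-j|\geq2$.

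\emph{Step 1: the Coxeter relations.}
\begin{itemize}
\item The involutivity $r_i^2=1$ is formal. Write $a=2i-1$ and $b=2i$. Since $\mu_a^2=1$ and $(a,b)^2=1$,
\[
r_i^2=\mu_a(a,b)\mu_a\mu_a(a,b)\mu_a=1 .
\]
\item For the other relations, first compute the action of each $r_i$ on the coefficients. As in Section \ref{Sec:Affine_Weyl}, the pair $\{2i-1,2i\}$ is regarded as a $2$-cycle of $Q_{105}$. For every other vertex, the number of arrows into this pair equals the number out of it. Hence $r_i$ preserves the form of $Q_{105}$ and acts by formulas of the shape $r_i(y_{2i-1})=y_{2i}^{-1}$, $r_i(y_{2i})=y_{2i-1}^{-1}$. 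Every vertex $k$ adjacent to the pair is multiplied by a factor of the form $y_{\ast}(1+y_{\ast})/(1+y_{\ast})$, and all other coefficients are fixed. This is checked directly from the mutation rule of Section \ref{Sec:Cluster}, exactly as for $r_0$ in Section \ref{Sec:Affine_Weyl}.
\item The commutation relations for $|i-j|\geq2$ follow once we check from Figure \ref{Fig:deg_q-Garnier_105} that the vertex sets moved by $r_i$ and $r_j$ interact only multiplicatively. That is, $r_i$ does not alter the variables $y_{2j-1},y_{2j}$ entering the factors of $r_j$, and vice versa. Where some vertex is adjacent to both pairs, its variable gets the product of the two factors, and each factor is fixed by the other reflection.
\item The braid relation $(r_ir_{i+1})^3=1$ is a finite rational-function identity in the coefficients of the two pairs and their neighbours. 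I would verify it by direct computation. Alternatively, I would appeal to the general MOT mechanism: the pairs form a chain of $2$-cycles, and the configuration of arrows between consecutive pairs matches that of $Q_{12}$ locally. Then the braid computation is literally the one already valid for $r_i,r_{i+1}$ in Section \ref{Sec:Affine_Weyl}.
\end{itemize}
This is the step I expect to be the main obstacle. The quiver $Q_{105}$ is not cyclic like $Q_{12}$, so the endpoints $r_1$ and $r_5$ must be checked separately. The cleanest route is the identity
\[
\mu_a\mu_{a'}\ldots=\ldots
\]
at the level of mutations and permutations, i.e.\ showing $r_ir_{i+1}r_i=r_{i+1}r_ir_{i+1}$ as compositions of mutations, in the style of the identities $\mu_1(1,2)\mu_1=\mu_2(1,2)\mu_2$ used in the proof above.

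\emph{Step 2: the map back and injectivity.} Step 1 yields a surjection $W(A_5)\to\langle r_1,\ldots,r_5\rangle$. For injectivity, restrict to the subfield $\mathbb{C}(\alpha_1,\ldots,\alpha_5)$. Let $(a_{i,j})$ denote the Cartan matrix of finite type $A_5$. From the coefficient formulas,
\[
r_i(\alpha_j)=\alpha_j\alpha_i^{-a_{i,j}} .
\]
This means that $r_i$ acts on the lattice of Laurent monomials $\alpha^{\lambda}$ by the simple reflection of the root lattice of $A_5$. That reflection representation is faithful. Therefore the composite $W(A_5)\to\langle r_i\rangle\to GL(\mathbb{Z}^5)$ is injective, hence so is the first map. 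Note that, unlike the affine cases, no null root appears here because $Q_{105}$ has no cycle through all pairs, so the group is finite.
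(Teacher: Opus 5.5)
Your argument is correct, but it is not the paper's route. The paper proves nothing here: it notes that the actions of $r_1,\ldots,r_5$ on the coefficients and on $\alpha_1,\ldots,\alpha_5$ coincide with the formulas of Inoue, Lam and Pylyavskyy, and imports the isomorphism from \cite{ILP}. You instead give a self-contained proof using the mechanism the paper only alludes to in Section~\ref{Sec:Affine_Weyl}. The Coxeter relations give a surjection from $W(A_5)$. The action $r_i(\alpha_j)=\alpha_j\alpha_i^{-a_{i,j}}$ on the free abelian group of monomials in the multiplicatively independent $\alpha_i=y_{2i-1}y_{2i}$ is the faithful reflection representation, which gives injectivity. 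The citation buys brevity and places $Q_{105}$ inside the ILP framework; your argument buys independence from matching conventions with \cite{ILP}.

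Moreover, the step you flag as the main obstacle disappears. Reading off the arrows of Figure~\ref{Fig:deg_q-Garnier_105}, $Q_{105}$ is exactly the full subquiver of $Q_{12}$ on $\{1,\ldots,10\}$; the confluence $1\to11$ just deletes one $2$-cycle. For a pair $\{a,b\}$ and $k\neq a,b$, the composite $\mu_a(a,b)\mu_a$ sends $y_k\mapsto y_k(1+y_a^{\mathrm{sgn}(\lambda_{k,a})})^{\lambda_{k,a}}(1+y_b^{\mathrm{sgn}(\lambda_{k,b})})^{\lambda_{k,b}}$, which depends only on the arrows between $k$ and the pair. Hence $\mathbb{C}(y_1,\ldots,y_{10})$ is stable under $r_0,\ldots,r_4$ of $Q_{12}$, and their restrictions are precisely $r_1,\ldots,r_5$ of $Q_{105}$. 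So every Coxeter relation, including those involving the end reflections $r_1,r_5$, is inherited from the MOT fact for $Q_{12}$. No separate endpoint computation or mutation identity of the form $\mu_a\mu_{a'}\cdots$ is needed.
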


In addition, we can introduce Dynkin diagram automorphisms
\[
	\pi_1 = (1,2)(3,4)(5,6)(7,8)(9,10),\quad
	\pi_2 = \iota(3,4)(7,8),\quad
	\pi_3 = \iota(1,10)(2,9)(3,8)(4,7)(5,6)
\]
and a variable
\[
	\gamma = \frac{y_1y_5y_9}{y_2y_6y_{10}}.
\]
Similarly to Section \ref{Sec:Degeneration_12_11}, we call $\gamma$ a simple root for the sake of convenience.
Then $\gamma$ is invariant under the action of the simple reflections and the Dynkin diagram automorphisms act on the simple roots as
\begin{align*}
	&\pi_1(\alpha_i) = \alpha_i\quad (i=1,\ldots,5),\quad
	\pi_1(\gamma) = \frac{1}{\gamma}, \\
	&\pi_2(\alpha_i) = \frac{1}{\alpha_i}\quad (i=1,\ldots,5),\quad
	\pi_2(\gamma) = \frac{1}{\gamma}, \\
	&\pi_3(\alpha_i) = \frac{1}{\alpha_{6-i}}\quad (i=1,\ldots,5),\quad
	\pi_3(\gamma) = \gamma.
\end{align*}
However, we can not give any translation from the group $\langle r_1,\ldots,r_5,\pi_1,\pi_2\rangle$.

\begin{rem}
Through the confluence $1\to11$, the simple reflections are reduced as follows.
\[\begin{array}{|c||c|c|c|}\hline
	Q_{11}  & r_1 & r_2 & r_3 \\\hline
	Q_{105} & r_2 & r_3 & r_4 \\\hline
\end{array}\]
We predict that the simple reflections $r_1,r_5$ and the Dynkin diagram automorphisms $\pi_1,\pi_2,\pi_3$ can not be obtained through the confluence $1\to11$ due to the form of $Q_{105}$.
\end{rem}

%%%%%%%%%%%%%%%%%%%%%%%%%%%%%%%%%%%%%%%%%%%%%%%%%%%%%%%%%%%%%%%%
% Section 7
%%%%%%%%%%%%%%%%%%%%%%%%%%%%%%%%%%%%%%%%%%%%%%%%%%%%%%%%%%%%%%%%
\section{Degeneration of the particular solution}\label{Sec:HG_Solution}

In this section, we focus on the quivers $Q_{11}$, $Q_{101}$ and $Q_{102}$ and the translation $\tau_c$.
Then the corresponding evolution directions of the degenerate $q$-Garnier systems admit particular solutions in terms of the basic hypergeometric series ${}_2\phi_2$ and ${}_1\phi_2$.
Recall that the action of $\tau_c$ is divergent through the confluence $Q_{11}\to Q_{103}$.
Also note that the confluences $Q_{11}\to Q_{104}$ and $Q_{11}\to Q_{105}$ are not consistent with the condition for the particular solution $y_3=y_7=y_{11}=-1$.

%%%%%%%%%%%%%%%%%%%%%%%%%%%%%%%%%%%%%%%%%%%%%%%%%%%%%%%%%%%%%%%%
\subsection{Quiver $Q_{11}$}

Consider the translation $\tau_c=V'U_1=\pi_1^5s_0$ in $Q_{11}$.
It acts on the simple roots as
\[
	\tau_c(\alpha_i) = \alpha_i\quad (i=0,\ldots,4),\quad
	\tau_c(\beta_0) = q^{-1}\beta_0,\quad
	\tau_c(\beta_1) = q\beta_1,\quad
	\tau_c(\gamma) = q^5\gamma.
\]
Assume that
\[
	y_3 = -1,\quad
	y_7 = -1,\quad
	y_{11} = -1.
\]
Then the action of $\tau_c$ on $y_1,y_5,y_9$ provides a $q$-Riccati system, whose detail we omit here.
On the other hand, we consider a system of linear $q$-difference equations
\begin{equation}\label{eq:HGE_11}
	x(q^{-1}t) = \left(A_0+tA_1\right)x(t)
\end{equation}
with $3\times3$ matrices
\begin{align*}
	&A_0 = \begin{bmatrix}1&0&0\\(1-q)\alpha_0&\alpha_0&0\\(1-q)\alpha_0&\alpha_0(1-\alpha_1)&\alpha_0\alpha_1\alpha_2\end{bmatrix}, \\
	&A_1 = \begin{bmatrix}-\alpha_0&-\frac{\alpha_0(1-\alpha_1)}{1-q}&-\frac{\alpha_0\alpha_1\alpha_2(1-\alpha_3)}{1-q}\\0&0&0\\0&0&0\end{bmatrix}.
\end{align*}
A solution to system \eqref{eq:HGE_11} gives that to the $q$-Riccati system.

\begin{prop}
The evolution direction of the degenerate $q$-Garnier system provided by $\tau_c$ in $Q_{11}$ admits a particular solution
\begin{align*}
	&y_1 = -\frac{x_0}{x_1},\quad
	y_2 = \frac{\alpha_0x_1}{x_0},\quad
	y_3 = -1,\quad
	y_4 = -\alpha_1,\quad
	y_5 = -\frac{x_1}{x_2},\quad
	y_6 = -\frac{\alpha_2x_2}{x_1}, \\
	&y_7 = -1,\quad
	y_8 = -\alpha_3,\quad
	y_9 = \frac{qtx_2}{x_0},\quad
	y_{10} = \frac{\alpha_4x_0}{qtx_2},\quad
	y_{11} = -1
\end{align*}
with
\begin{align*}
	&x_0 = {}_2\phi_2\left[\begin{array}{c}\alpha_0\alpha_1,\alpha_0\alpha_1\alpha_2\alpha_3\\\alpha_0,\alpha_0\alpha_1\alpha_2\end{array};q,q\alpha_0t\right], \\
	&x_1 = \frac{(1-q)\alpha_0}{1-\alpha_0}{}_2\phi_2\left[\begin{array}{c}\alpha_0\alpha_1,\alpha_0\alpha_1\alpha_2\alpha_3\\q\alpha_0,\alpha_0\alpha_1\alpha_2\end{array};q,q^2\alpha_0t\right], \\
	&x_2 = \frac{(1-q)\alpha_0(1-\alpha_0\alpha_1)}{(1-\alpha_0)(1-\alpha_0\alpha_1\alpha_2)}{}_2\phi_2\left[\begin{array}{c}q\alpha_0\alpha_1,\alpha_0\alpha_1\alpha_2\alpha_3\\q\alpha_0,q\alpha_0\alpha_1\alpha_2\end{array};q,q^2\alpha_0t\right],
\end{align*}
where
\[
	{}_2\phi_2\left[\begin{array}{c}a_1,a_2\\b_1,b_2\end{array};q,t\right] = \sum_{n=0}^{\infty}\frac{(a_1;q)_n(a_2;q)_n}{(b_1;q)_n(b_2;q)_n(q;q)_n}(-1)^nq^{\frac{n(n-1)}{2}}t^n.
\]
\end{prop}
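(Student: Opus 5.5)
The proof splits into two independent verifications, in the same way Fact 4.2 is proved in \cite{IS,Suz2}: (a) under the specialization $y_3=y_7=y_{11}=-1$ (with $y_4=-\alpha_1$, $y_8=-\alpha_3$ forced by $\alpha_1=y_3y_4$, $\alpha_3=y_7y_8$), the action of $\tau_c=\pi_1^5s_0$ closes on $y_1,y_5,y_9$ and becomes a $q$-Riccati system which is linearized by \eqref{eq:HGE_11} via $y_1=-x_0/x_1$, $y_5=-x_1/x_2$, $y_9=qtx_2/x_0$; (b) the stated ${}_2\phi_2$ series solve \eqref{eq:HGE_11}.

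For (a), I first compute $\tau_c$ on the coefficients of $Q_{11}$. The least error-prone route is to take the $Q_{12}$ formula for $\pi_2s'_0s_0$ and apply the confluence $12\to1$, using Theorem 5.1 and Corollary 5.2 ($\pi_2s'_0\mapsto\pi_1^5$). Then I impose $y_3=y_7=y_{11}=-1$ and check three things. First, the specialization is invariant, i.e.\ $\tau_c(y_3)=\tau_c(y_7)=\tau_c(y_{11})=-1$; this should follow because the factors $1+y_{2i+1}$ vanish, just as in the $Q_{12}$ case. Second, the remaining variables are fixed by the roots, namely $y_2=\alpha_0/(y_1y_{11})=-\alpha_0/y_1$, $y_6=\alpha_2/y_5$ and $y_{10}=\alpha_4/y_9$; these agree with the displayed $y_2,y_6,y_{10}$. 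Third, $\tau_c(y_1),\tau_c(y_5),\tau_c(y_9)$ are rational in $y_1,y_5,y_9$. The independent variable is $t$, with $\beta_0\propto t$ ($\tau_c(\beta_0)=q^{-1}\beta_0$, so $\tau_c:t\mapsto q^{-1}t$). Since $\beta_0=y_1y_4y_5y_8y_9=\alpha_1\alpha_3y_1y_5y_9$, the ansatz gives $\beta_0=-q\alpha_1\alpha_3t$, and the consistency check $\tau_c(y_1y_5y_9)=q^{-1}y_1y_5y_9$ follows. Finally, writing $x(q^{-1}t)=(A_0+tA_1)x(t)$ componentwise and substituting the ratios, I verify that $-x_0(q^{-1}t)/x_1(q^{-1}t)$ etc.\ reproduce the Riccati formulas. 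This amounts to matching numerators and denominators as polynomials in $x_0,x_1,x_2$ up to a common factor.

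For (b), because $A_1$ has rank one (only its first row is nonzero), the second and third rows of \eqref{eq:HGE_11} are first order in $x_1,x_2$ with $t$-independent coefficients, giving $x_1,x_2$ from $x_0$ by contiguity. The first row yields a scalar third-order equation for $x_0$. I verify the three rows termwise on the series: the row 2 and row 3 relations are the standard contiguity relations $b\mapsto qb$, $a\mapsto qa$ for ${}_2\phi_2$, combined with argument shifts $q\alpha_0t\mapsto q^2\alpha_0t$. These are checked by comparing coefficients of $t^n$ using $(a;q)_{n+1}=(1-a)(aq;q)_n$ and the factor $(-1)^nq^{n(n-1)/2}$. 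The first row is the one genuinely involving $t$, and its coefficient identity is a telescoping combination of the other two.

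The main obstacle is step (a): obtaining the explicit $\tau_c$ on $Q_{11}$, since $\pi_1^5$ involves $\mu_2$ repeatedly, and confirming that the Riccati reduction matches the sign conventions ($y_2,y_9,y_{10}$ carry opposite signs compared to Fact 4.2). I would first try deriving it as the confluence limit of the $Q_{12}$ Riccati system, taking $y_{12}=-\alpha_5\to\varepsilon^{-1}y_1$ suitably rescaled. In parallel I would try checking that \eqref{eq:HGE_11} is the confluent limit of \eqref{eq:HGE_12} with $\alpha_5\to$ a degenerate value, $t$ rescaled, and ${}_3\phi_2\to{}_2\phi_2$ by the standard limit $a_1\to\infty$. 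That limit gives an independent confirmation of the normalizations in $x_1,x_2$.
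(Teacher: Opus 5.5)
Your plan has the same structure as the paper's treatment, which only asserts the Riccati reduction of $\tau_c$ on $y_3=y_7=y_{11}=-1$, its linearization by \eqref{eq:HGE_11}, and the confluent origin of \eqref{eq:HGE_11}. Your part (b) is correct. Rows 2 and 3 are exactly the contiguity relations you name. Row 1 reduces termwise, with $Q=q^n$, $a_1=\alpha_0\alpha_1$, $a_2=\alpha_0\alpha_1\alpha_2\alpha_3$, $b_1=\alpha_0$, $b_2=\alpha_0\alpha_1\alpha_2$, to the identity $(1-a_1Q)(1-a_2Q)=(1-b_1Q)(1-b_2Q)+(b_1-a_1)Q(1-b_2Q)+(b_2-a_2)Q(1-a_1Q)$.

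The gap is the final matching in (a): with the dictionary exactly as displayed, it will not go through.

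\emph{Where it fails.} Carry out your own first route. Restrict the $Q_{12}$ Riccati system to the locus (where $y_{12}=-\alpha_5$ holds automatically) and substitute $y_{12}\to\varepsilon^{-1}y_1$, $y_1\to\varepsilon$. By the confluence theorem of Section~\ref{Sec:Degeneration_12_11} and the note after its corollary, this limit is $\tau_c$ on $Q_{11}$. It gives, for instance,
\[
\tau_c(y_5)=\frac{y_5(\alpha_0-y_1)}{y_1y_5-\alpha_0(1-\alpha_1)y_5+\alpha_0\alpha_1\alpha_2}.
\]
By contrast, rows 2 and 3 of \eqref{eq:HGE_11} with $y_1=-x_0/x_1$, $y_5=-x_1/x_2$ give
\[
\tau_c(y_5)=\frac{y_5\{1-(1-q)y_1\}}{(1-q)y_1y_5-(1-\alpha_1)y_5+\alpha_1\alpha_2}.
\]
These agree only if $(1-q)\alpha_0=1$.

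\emph{A quick test.} At $t=0$ (so $y_9=0$) the limit system reads $\tau_c(y_1)=-y_1/(y_1-\alpha_0)$. This forces $y_1(0)\in\{0,\alpha_0-1\}$. But $x(0)=A_0x(0)$ gives $y_1(0)=-(1-\alpha_0)/((1-q)\alpha_0)$.

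\emph{The source.} Your second cross-check locates the problem. Push $y_1^{Q_{11}}=y_1y_{12}$ and the other $y_i$ of the $Q_{12}$ solution through the replacement $\alpha_0\to\varepsilon$, $\alpha_5\to\alpha_0/\varepsilon$, $t\to\varepsilon t$, $x_{1,2}\to-x_{1,2}/(\varepsilon(1-q))$. The limit is $y_1=-(1-q)\alpha_0x_0/x_1$, $y_2=x_1/((1-q)x_0)$, $y_9=qtx_2/((1-q)x_0)$, $y_{10}=(1-q)\alpha_4x_0/(qtx_2)$, with the other $y_i$ as displayed. With this dictionary all three Riccati equations do follow from \eqref{eq:HGE_11}. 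Equivalently, keep the displayed $y_i$, drop the factor $(1-q)\alpha_0$ from $x_1,x_2$, and replace $\alpha_0t$ by $t$ in the arguments of the ${}_2\phi_2$.

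So the confluence check will not confirm the normalizations of $x_1,x_2$. It exposes a normalization slip in the statement, and your proof can only be completed for the corrected dictionary. A minor slip as well: since $y_9=+qtx_2/x_0$, you get $y_1y_5y_9=qt$ and $\beta_0=q\alpha_1\alpha_3t$, not $-q\alpha_1\alpha_3t$.
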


System \eqref{eq:HGE_11} is derived from system \eqref{eq:HGE_12} by a replacement
\[
	\alpha_0 \to \varepsilon,\quad
	\alpha_5 \to \frac{\alpha_0}{\varepsilon},\quad
	t \to \varepsilon t,\quad
	x_1 \to -\frac{x_1}{\varepsilon(1-q)},\quad
	x_2 \to -\frac{x_2}{\varepsilon(1-q)}
\]
and a limit $\varepsilon\to0$.
This confluence procedure is naturally derived from that given in Section \ref{Sec:Degeneration_12_11}.

%%%%%%%%%%%%%%%%%%%%%%%%%%%%%%%%%%%%%%%%%%%%%%%%%%%%%%%%%%%%%%%%
\subsection{Quiver $Q_{101}$}

Consider the translation $\tau_c=V'U_1=\pi_2\pi_1^2s_0$ in $Q_{101}$.
It acts on the simple roots as
\[
	\tau_c(\alpha_i) = \alpha_i\quad (i=0,\ldots,3),\quad
	\tau_c(\beta_0) = q^{-1}\beta_0,\quad
	\tau_c(\beta_1) = q\beta_1,\quad
	\tau_c(\gamma) = q^2\gamma.
\]
Assume that
\[
	y_3 = -1,\quad
	y_4 = -1,\quad
	y_7 = -1.
\]
Then the action of $\tau_c$ on $y_1,y_5,y_9$ provides a $q$-Riccati system, whose detail we omit here.
On the other hand, we consider a system of linear $q$-difference equations
\begin{equation}\label{eq:HGE_101}
	x(q^{-1}t) = \left(A_0+tA_1\right)x(t)
\end{equation}
with $3\times3$ matrices
\begin{align*}
	&A_0 = \begin{bmatrix}1&0&0\\(1-q)\alpha_0&\alpha_0&0\\0&(1-q)\alpha_0\alpha_1&\alpha_0\alpha_1\end{bmatrix}, \\
	&A_1 = \begin{bmatrix}0&\frac{\alpha_0\alpha_1}{1-q}&\frac{\alpha_0\alpha_1(1-\alpha_2)}{(1-q)^2}\\0&0&0\\0&0&0\end{bmatrix}.
\end{align*}
A solution to system \eqref{eq:HGE_101} gives that to the $q$-Riccati system.

\begin{prop}
The evolution direction of the degenerate $q$-Garnier system provided by $\tau_c$ in $Q_{101}$ admits a particular solution
\begin{align*}
	&y_1 = -\frac{x_0}{x_1},\quad
	y_2 = \frac{\alpha_0x_1}{x_0},\quad
	y_3 = -1,\quad
	y_4 = -1,\quad
	y_5 = -\frac{x_1}{x_2},\quad
	y_6 = \frac{\alpha_1x_2}{x_1}, \\
	&y_7 = -1,\quad
	y_8 = -\alpha_2,\quad
	y_9 = -\frac{qtx_2}{x_0},\quad
	y_{10} = -\frac{\alpha_3x_0}{qtx_2}
\end{align*}
with
\begin{align*}
	&x_0 = {}_1\phi_2\left[\begin{array}{c}\alpha_0\alpha_1\alpha_2\\\alpha_0,\alpha_0\alpha_1\end{array};q,q\alpha_0^2\alpha_1t\right], \\
	&x_1 = \frac{(1-q)\alpha_0}{1-\alpha_0}{}_1\phi_2\left[\begin{array}{c}\alpha_0\alpha_1\alpha_2\\\alpha_0,\alpha_0\alpha_1\end{array};q,q^2\alpha_0^2\alpha_1t\right], \\
	&x_2 = \frac{(1-q)^2\alpha_0^2\alpha_1}{(1-\alpha_0)(1-\alpha_0\alpha_1)}{}_1\phi_2\left[\begin{array}{c}\alpha_0\alpha_1\alpha_2\\q\alpha_0,q\alpha_0\alpha_1\end{array};q,q^3\alpha_0^2\alpha_1t\right],
\end{align*}
where
\[
	{}_1\phi_2\left[\begin{array}{c}a_1\\b_1,b_2\end{array};q,t\right] = \sum_{n=0}^{\infty}\frac{(a_1;q)_n}{(b_1;q)_n(b_2;q)_n(q;q)_n}q^{n(n-1)}t^n.
\]
\end{prop}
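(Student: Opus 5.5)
The proof has two parts. First, we check that the vector $x={}^t[x_0,x_1,x_2]$ given in terms of ${}_1\phi_2$ solves the linear system \eqref{eq:HGE_101}. Second, we check that the functions $y_1,\ldots,y_{10}$ built from it satisfy the $q$-Riccati system obtained from $\tau_c=\pi_2\pi_1^2s_0$ in $Q_{101}$ under $y_3=y_4=y_7=-1$. As in Section 4, the independent variable is identified with $t$ through a monomial relation of the form $\beta_0=c\,t$, where $c$ depends only on $\alpha_i$ and $q$, chosen so that $\tau_c(\beta_0)=q^{-1}\beta_0$ corresponds to $t\mapsto q^{-1}t$.

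\textbf{Step 1: the linear system.} Since $A_1$ has only its first row nonzero and $A_0$ is lower bidiagonal, \eqref{eq:HGE_101} reads
\begin{align*}
x_0(q^{-1}t)&=x_0+t\Bigl(\tfrac{\alpha_0\alpha_1}{1-q}x_1+\tfrac{\alpha_0\alpha_1(1-\alpha_2)}{(1-q)^2}x_2\Bigr),\\
x_1(q^{-1}t)&=(1-q)\alpha_0x_0+\alpha_0x_1,\\
x_2(q^{-1}t)&=(1-q)\alpha_0\alpha_1x_1+\alpha_0\alpha_1x_2 .
\end{align*}
Write each $x_k$ as a power series $\sum_n c^{(k)}_nt^n$. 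The last two equations then become coefficientwise relations $c^{(1)}_n(q^{-n}-\alpha_0)=(1-q)\alpha_0c^{(0)}_n$, and similarly for $c^{(2)}_n$. These fix $x_1$ and $x_2$ from $x_0$, and we verify them using $(b;q)_{n+1}=(1-b)(qb;q)_n$ together with the shifts of the argument $q\alpha_0^2\alpha_1t\to q^2\alpha_0^2\alpha_1t\to q^3\alpha_0^2\alpha_1t$. Substituting into the first equation gives a scalar recursion of third order in $q$-shifts. We check that this recursion is the $q$-difference equation satisfied by ${}_1\phi_2$ with the given parameters, which amounts to comparing the ratio $c_{n+1}/c_n$, including the factor $q^{n(n-1)}$. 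A faster route is to derive \eqref{eq:HGE_101}, and the ${}_1\phi_2$ series, as a limit of \eqref{eq:HGE_11} and its ${}_2\phi_2$ solution from the preceding proposition. The substitution for that limit is the one induced by the confluence $4\to5$ on roots: $\alpha_1\alpha_2\mapsto\alpha_1$, with a $\varepsilon$-rescaling of $t$ and $x_2$. The ${}_2\phi_2$ series degenerates termwise to ${}_1\phi_2$ by the standard limit $(a/\varepsilon;q)_n\varepsilon^n\to(-a)^nq^{n(n-1)/2}$, which explains the change $q^{n(n-1)/2}\to q^{n(n-1)}$.

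\textbf{Step 2: the Riccati system.} We compute the action of $\tau_c$ on $y_1,y_5,y_9$ in $Q_{101}$ restricted to $y_3=y_4=y_7=-1$. Rather than expanding the mutations, the cleanest way is to take the $\varepsilon\to0$ limit of the $Q_{11}$ Riccati system along the confluence $4\to5$. This is legitimate because the Corollary of Section 6.1 states that $\tau_c$ in $Q_{11}$ reduces to $\tau_c$ in $Q_{101}$. The constraints $y_3=y_7=y_{11}=-1$ in $Q_{11}$ become $y_3=y_7=y_4=-1$ after relabelling $11\to4$. We then substitute $y_1=-x_0/x_1$, $y_5=-x_1/x_2$ and $y_9=-qtx_2/x_0$, and express $\tau_c(y_i)$ through $x_k(q^{-1}t)$ using \eqref{eq:HGE_101}. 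Both sides become rational functions in $x_0,x_1,x_2$ of matching homogeneity, and we check the identities by direct algebra. The remaining variables $y_2,y_6,y_8,y_{10}$ follow from $\alpha_0=y_1y_2y_4$, $\alpha_1=y_3y_5y_6$, $\alpha_2=y_7y_8$ and $\alpha_3=y_9y_{10}$ with $y_3=y_4=y_7=-1$.

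\textbf{Main obstacle.} The hardest step is the explicit Riccati form in Step 2. Obtaining the precise $q$-Riccati formulas, either by composing the mutations in $\pi_2\pi_1^2s_0$ or by controlling the limit without divergence, and fixing the normalisation $\beta_0=c\,t$, is delicate bookkeeping. I would do this by taking the limit from $Q_{11}$ and confirming the result with symbolic computation.
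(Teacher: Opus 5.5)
Your plan follows the route the paper indicates: the linear system \eqref{eq:HGE_101}, obtained from \eqref{eq:HGE_11} by $\alpha_1\to\alpha_1/\varepsilon$, $\alpha_2\to\varepsilon$, $t\to\varepsilon t$, $x_2\to-x_2/(\varepsilon(1-q))$, plus the claim that its solutions solve the omitted $q$-Riccati system. Your termwise limit ${}_2\phi_2\to{}_1\phi_2$ is also right. However, both checks you promise fail on the statement as printed.

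In Step~1, the relation $c^{(1)}_n(q^{-n}-\alpha_0)=(1-q)\alpha_0c^{(0)}_n$ is false for the printed $x_1$. With lower parameters $\alpha_0,\alpha_0\alpha_1$ one gets $c^{(1)}_n/c^{(0)}_n=(1-q)\alpha_0q^n/(1-\alpha_0)$, which leaves a spurious factor $(1-\alpha_0q^n)/(1-\alpha_0)$. The lower parameters must be $q\alpha_0,\alpha_0\alpha_1$, which is exactly what your ``faster route'' through the $Q_{11}$ series produces. With that correction all three rows of \eqref{eq:HGE_101} hold; the first row reduces to the $c_{n+1}/c_n$ check as you say.

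The real gap is Step~2, which you leave as ``direct algebra''. Once the Riccati system is actually computed, the identity is false. You can compute it by composing $\pi_2\pi_1^2s_0$ in $Q_{101}$, where $s_0$ is the reflection along $1\to9\to8\to5\to1$. Equivalently, push the explicit $Q_{12}$ system of Section~4 through $12\to1$ and $4\to5$. Either way, on $y_3=y_4=y_7=-1$,
\[
	\tau_c(y_1)=\frac{y_1\{y_5y_9-\alpha_1(1-\alpha_2)y_9+\alpha_1\alpha_2\alpha_3\}}{\alpha_1\alpha_2\alpha_3(\alpha_0-y_1)}.
\]
At $t=0$ we have $y_9=0$, and $t=0$ is fixed by $t\mapsto q^{-1}t$. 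So $y_1(0)$ must be a fixed point of $y_1\mapsto y_1/(\alpha_0-y_1)$, i.e.\ $y_1(0)=\alpha_0-1$. The printed data give instead $y_1(0)=-(1-\alpha_0)/((1-q)\alpha_0)$.

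The cause is that the confluence drops constants. Carry the $Q_{12}$ solution through the replacements of Section~\ref{Sec:HG_Solution}, setting $y_{\mathrm{new}}=y_iy_j$ at each merged vertex. This gives
\[
	y_1=-\frac{(1-q)\alpha_0x_0}{x_1},\quad y_2=\frac{x_1}{(1-q)x_0},\quad y_5=-\frac{(1-q)\alpha_1x_1}{x_2},\quad y_6=\frac{x_2}{(1-q)x_1},\quad y_9=-\frac{qtx_2}{(1-q)^2x_0},\quad y_{10}=-\frac{(1-q)^2\alpha_3x_0}{qtx_2}.
\]
With these, the first two rows of \eqref{eq:HGE_101} turn the displayed formula into $y_1(q^{-1}t)$ exactly. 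Equivalently, keep the printed $y$'s but replace $x_1,x_2,t$ by $x_1/((1-q)\alpha_0)$, $x_2/((1-q)^2\alpha_0\alpha_1)$ and $\alpha_0\alpha_1t$.

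The $Q_{11}$ proposition has the same defect: there too $\tau_c(y_1)$ restricts to $y_1\mapsto y_1/(\alpha_0-y_1)$ at $y_9=0$. So you cannot reverse-engineer the $Q_{11}$ Riccati system from that proposition. It has to be computed from the mutations, or as the limit of the $Q_{12}$ system.
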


System \eqref{eq:HGE_101} is derived from system \eqref{eq:HGE_11} by a replacement
\[
	\alpha_1 \to \frac{\alpha_1}{\varepsilon},\quad
	\alpha_2 \to \varepsilon,\quad
	\alpha_3 \to \alpha_2,\quad
	\alpha_4 \to \alpha_3,\quad
	t \to \varepsilon t,\quad
	x_2 \to -\frac{x_2}{\varepsilon(1-q)}
\]
and a limit $\varepsilon\to0$.
This confluence procedure is naturally derived from that given in Section \ref{Sec:Degeneration_11_101}.

%%%%%%%%%%%%%%%%%%%%%%%%%%%%%%%%%%%%%%%%%%%%%%%%%%%%%%%%%%%%%%%%
\subsection{Quiver $Q_{102}$}

Consider the translation $\tau_c=V=\pi_3\pi_2\pi_1\pi_3\pi_2\pi_1$ in $Q_{102}$.
It acts on the simple roots as
\[
	\tau_c(\alpha_i) = \alpha_i\quad (i=0,\ldots,3),\quad
	\tau_c(\gamma_1) = q^2\gamma_1,\quad
	\tau_c(\gamma_2) = \gamma_2.
\]
Assume that
\[
	y_3 = -1,\quad
	y_6 = -1,\quad
	y_7 = -1.
\]
Then the action of $\tau_c$ on $y_1,y_5,y_9$ provides a $q$-Riccati system, whose detail we omit here.
On the other hand, we consider a system of linear $q$-difference equations
\begin{equation}\label{eq:HGE_102}
	x(q^{-1}t) = \left(A_0+tA_1\right)x(t)
\end{equation}
with $3\times3$ matrices
\begin{align*}
	&A_0 = \begin{bmatrix}1&0&0\\(1-q)\alpha_0&\alpha_0&0\\(1-q)^2\alpha_0&(1-q)\alpha_0&\alpha_0\alpha_1\end{bmatrix}, \\
	&A_1 = \begin{bmatrix}-\alpha_0&-\frac{\alpha_0}{1-q}&-\frac{\alpha_0\alpha_1(1-\alpha_2)}{(1-q)^2}\\0&0&0\\0&0&0\end{bmatrix}.
\end{align*}
A solution to system \eqref{eq:HGE_102} gives that to the $q$-Riccati system.

\begin{prop}
The evolution direction of the degenerate $q$-Garnier system provided by $\tau_c$ in $Q_{102}$ admits a particular solution
\begin{align*}
	&y_1 = -\frac{x_0}{x_1},\quad
	y_2 = \frac{\alpha_0x_1}{x_0},\quad
	y_3 = -1,\quad
	y_4 = \frac{\alpha_1x_2}{x_1},\quad
	y_5 = -\frac{x_1}{x_2},\quad
	y_6 = -1, \\
	&y_7 = -1,\quad
	y_8 = -\alpha_2,\quad
	y_9 = -\frac{qtx_2}{x_0},\quad
	y_{10} = -\frac{\alpha_3x_0}{qtx_2}
\end{align*}
with
\begin{align*}
	&x_0 = {}_2\phi_2\left[\begin{array}{c}0,\alpha_0\alpha_1\alpha_2\\\alpha_0,\alpha_0\alpha_1\end{array};q\alpha_0t\right], \\
	&x_1 = \frac{(1-q)\alpha_0}{1-\alpha_0}{}_2\phi_2\left[\begin{array}{c}0,\alpha_0\alpha_1\alpha_2\\q\alpha_0,\alpha_0\alpha_1\end{array};q^2\alpha_0t\right], \\
	&x_2 = \frac{(1-q)^2\alpha_0}{(1-\alpha_0)(1-\alpha_0\alpha_1)}{}_2\phi_2\left[\begin{array}{c}0,\alpha_0\alpha_1\alpha_2\\q\alpha_0,q\alpha_0\alpha_1\end{array};q^2\alpha_0t\right].
\end{align*}
\end{prop}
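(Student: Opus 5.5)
The proof splits into two independent checks, following the pattern used for the $Q_{11}$ and $Q_{101}$ cases.

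\emph{Linear-system part.} First verify that the vector $x={}^t[x_0,x_1,x_2]$ given by the ${}_2\phi_2$ series with an upper parameter $0$ solves system \eqref{eq:HGE_102}. Write $x_0=\sum_n c_n(q\alpha_0t)^n$, with $c_n$ carrying the weight $(-1)^nq^{n(n-1)/2}$ and the factor $(0;q)_n=1$. Then compare coefficients of $t^n$ in each row of $x(q^{-1}t)=(A_0+tA_1)x(t)$.

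Because $A_1$ has only a nonzero first row, the second and third rows involve no shift in $n$. They read
\[
x_1(q^{-1}t)=(1-q)\alpha_0x_0(t)+\alpha_0x_1(t),\qquad
x_2(q^{-1}t)=(1-q)^2\alpha_0x_0(t)+(1-q)\alpha_0x_1(t)+\alpha_0\alpha_1x_2(t).
\]
These are contiguity relations: they should follow termwise from $(b;q)_{n+1}=(1-b)(bq;q)_n$ and from comparing $q^{-n}$ against $1$. The first row, $x_0(q^{-1}t)-x_0(t)=tA_1x(t)$, is the genuine second-order relation. Substituting the series, it reduces to the standard $q$-difference equation satisfied by ${}_2\phi_2[0,a;b_1,b_2;q,z]$, rewritten in terms of contiguous functions.

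An alternative route, which doubles as a check, is to obtain \eqref{eq:HGE_102} as a limit of \eqref{eq:HGE_11} under the confluence $6\to4$. A plausible replacement is $\alpha_1\to\varepsilon^{-1}\alpha_1$, $\alpha_2\to\varepsilon$, followed by a rescaling of $t$ and $x_2$ as in the previous subsection, with a final relabelling of parameters. Applied to the ${}_2\phi_2$ series of the proposition for $Q_{11}$, this sends an upper parameter to $0$ termwise, which is exactly the shape of $x_0,x_1,x_2$ here.

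\emph{Riccati part.} Next show that the stated $y_i$ satisfy $\tau_c(y_i)=\,$(the $q$-Riccati right-hand side). Here $\tau_c=\pi_3\pi_2\pi_1\pi_3\pi_2\pi_1$ is computed from the mutation and permutation definitions under the constraints $y_3=y_6=y_7=-1$. Several preliminary facts must be checked:
\begin{itemize}
\item The constraints are preserved by $\tau_c$. The factors $1+y_i$ with $y_i=-1$ vanish, which collapses the long rational expressions, exactly as in the $Q_{12}$ case.
\item The remaining $y_i$ are fixed by the simple roots: $y_8=-\alpha_2$ from $\alpha_2=y_7y_8$, $y_2=\alpha_0/y_1$, $y_{10}=\alpha_3/y_9$, and $y_4=\alpha_1/(y_3y_5)=-\alpha_1/y_5$.
\item The identification $t\propto\gamma_1$ holds, with $t\mapsto q^2t$ matching $\tau_c(\gamma_1)=q^2\gamma_1$ and $\gamma_2$ a constant.
\end{itemize}
The resulting Riccati system in $y_1,y_5,y_9$ is then matched with the projectivization of \eqref{eq:HGE_102}. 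Set $y_1=-x_0/x_1$, $y_5=-x_1/x_2$, $y_9=-qtx_2/x_0$. Because $\tau_c$ is a double step, express $x(q^{-2}t)$ (or $x(q^2t)$, depending on orientation) as $(A_0+q^{-1}tA_1)(A_0+tA_1)x(t)$. Substituting into the ratios should reproduce the rational functions obtained from $\tau_c$.

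\emph{Main obstacle.} The difficult step is computing $\tau_c$ explicitly on the coefficients: it is a composition of twelve automorphisms, each involving mutations. The plan is to push the constraint $y_3=y_6=y_7=-1$ through at each stage to keep the expressions small. As a fallback, the Riccati system can be obtained as the $\varepsilon\to0$ limit of the $Q_{11}$ Riccati system via the confluence $6\to4$. That is legitimate because the corollary identifies $\tau_c$ in $Q_{11}$ with $V$ in $Q_{102}$, and the hypergeometric solution degenerates compatibly.
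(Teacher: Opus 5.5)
Your linear-system part is sound. The three rows of \eqref{eq:HGE_102} do check termwise: rows two and three follow from $q^{-n}-b=q^{-n}(1-bq^n)$ for $b=\alpha_0,\alpha_0\alpha_1$, and in row one the three terms of $tA_1x$ combine to the factor $1-\alpha_0\alpha_1\alpha_2q^n$. Your two-part structure is also the same as the paper's. The paper only indicates that \eqref{eq:HGE_102} is a limit of \eqref{eq:HGE_11} and that its solutions give the $q$-Riccati system.

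\textbf{The time-variable identification is wrong.} You assume $t\propto\gamma_1$, and so plan to match $\tau_c$ with the two-step iterate $(A_0+q^{-1}tA_1)(A_0+tA_1)$. Substituting the stated parametrization gives instead
\[
	\gamma_1=\frac{y_2^2y_3y_4y_{10}}{y_5y_9}=\frac{\alpha_0^2\alpha_1\alpha_3}{q^2t^2}.
\]
So $\tau_c(\gamma_1)=q^2\gamma_1$ corresponds to the single step $t\mapsto q^{-1}t$. This matches $Q_{11}$: there, under the particular solution, $\beta_1\propto t^{-1}$ and $\gamma\propto t^{-5}$, and $\gamma_1$ descends from $\beta_1\gamma^{1/5}$ times constants. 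The Riccati map of $\tau_c$ must therefore be compared with the one-step relation $x(q^{-1}t)=(A_0+tA_1)x(t)$. Comparing it with the two-step product would fail.

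There is also a sign slip. In $Q_{102}$ one has $\alpha_0=y_1y_2y_6$, so $y_6=-1$ forces $y_2=-\alpha_0/y_1$, not $y_2=\alpha_0/y_1$. The statement agrees, since there $y_1y_2=-\alpha_0$.

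\textbf{Your confluence cross-check uses the wrong scaling.} This also affects the fallback built on it. Your replacement is $\alpha_1\to\varepsilon^{-1}\alpha_1$, $\alpha_2\to\varepsilon$, with $t\to\varepsilon t$ and the $x_2$-rescaling of the previous subsection. That is the $4\to5$ replacement. It sends the upper parameter $\alpha_0\alpha_1$ to infinity, and together with $t\to\varepsilon t$ this produces an extra factor $q^{n(n-1)/2}$. The result is \eqref{eq:HGE_101} with ${}_1\phi_2$, not a series with upper parameter $0$.

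For $6\to4$, the coefficient confluence $y_4\to\varepsilon$, $y_6\to\varepsilon^{-1}y_4$ makes $\alpha_1=y_3y_4$ small and $\alpha_2=y_5y_6$ large in $Q_{11}$. The correct replacement is therefore $\alpha_1\to\varepsilon$, $\alpha_2\to\varepsilon^{-1}\alpha_1$, $\alpha_3\to\alpha_2$, $\alpha_4\to\alpha_3$, $x_2\to x_2/(1-q)$, with $t$ unchanged. Then $\alpha_0\alpha_1\to0$, $\alpha_0\alpha_1\alpha_2\alpha_3\to\alpha_0\alpha_1\alpha_2$ and $\alpha_0\alpha_1\alpha_2\to\alpha_0\alpha_1$, which gives exactly the stated series. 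Since $t$ is not rescaled, this also confirms that the time step is $q^{-1}$.
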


System \eqref{eq:HGE_102} is derived from system \eqref{eq:HGE_11} by a replacement
\[
	\alpha_1 \to \varepsilon,\quad
	\alpha_2 \to \frac{\alpha_1}{\varepsilon},\quad
	\alpha_3 \to \alpha_2,\quad
	\alpha_4 \to \alpha_3,\quad
	x_2 \to \frac{x_2}{1-q}
\]
and a limit $\varepsilon\to0$.
This confluence procedure is naturally derived from that given in Section \ref{Sec:Degeneration_11_102}.

%%%%%%%%%%%%%%%%%%%%%%%%%%%%%%%%%%%%%%%%%%%%%%%%%%%%%%%%%%%%%%%%
\section*{Acknowledgement}

The authors would like to express their gratitude to Professors Tetsu Masuda, Naoto Okubo and Teruhisa Tsuda for helpful comments and advice.
This work is supported by JSPS KAKENHI Grant Number 25KJ0371.
The authors declare no conflicts of interest associated with this manuscript.

\section*{Data Availability}

Data sharing not applicable to this article as no datasets were generated or analyzed during the current study.

%%%%%%%%%%%%%%%%%%%%%%%%%%%%%%%%%%%%%%%%%%%%%%%%%%%%%%%%%%%%%%%%

\end{document}